\DeclareMathOperator{\N}{\mathbb{N}}
\DeclareMathOperator{\R}{\mathbb{R}}
\DeclareMathOperator{\cB}{\mathcal{B}}
\DeclareMathOperator{\cH}{\mathcal{H}}
\DeclareMathOperator{\iii}{\mathtt{i}}
\DeclareMathOperator{\jjj}{\mathtt{j}}
\DeclareMathOperator{\kkk}{\mathtt{k}}
\DeclareMathOperator{\dimh}{\dim_H}
\renewcommand{\phi}{\varphi}
\numberwithin{equation}{section}
\theoremstyle{plain}
\newtheorem{theorem}{Theorem}[section]
\newtheorem{corollary}[theorem]{Corollary}
\newtheorem{proposition}[theorem]{Proposition}
\newtheorem{lemma}[theorem]{Lemma}
\theoremstyle{remark}
\theoremstyle{definition}
\title{Recurrence rates for shifts of finite type}
\author{Demi Allen$^{1}$ \and Simon Baker$^{2}$ \and Bal\'azs B\'ar\'any$^{3,4,}$\thanks{BB acknowledges support from grants OTKA K123782 and OTKA~FK134251.}}
\begin{document}
\maketitle
\begin{center}
  \vspace{-1em}
$1)$ Department of Mathematics and Statistics, University of Exeter, Harrison Building, North Park Road, Exeter, EX4 4QF, UK.\\
Email address: \texttt{d.d.allen@exeter.ac.uk}\vskip.5em
$2)$Department of Mathematical Sciences, Loughborough University, Loughborough, LE11 3TU, UK\\
Email address: \texttt{simonbaker412@gmail.com}\vskip.5em
$3)$ Department of Stochastics, Institute of Mathematics, Budapest University of Technology and Economics, M\H{u}egyetem rkp. 3., H-1111 Budapest, Hungary.\\
$4)$ MTA-BME Stochastics Research Group, M\H{u}egyetem rkp. 3., H-1111 Budapest, Hungary\\
Email address: \texttt{balubsheep@gmail.com}\vskip.5em
\end{center}
\vspace{1em}

\begin{abstract}
Let $\Sigma_{A}$ be a topologically mixing shift of finite type, let $\sigma:\Sigma_{A}\to\Sigma_{A}$ be the usual left-shift, and let $\mu$ be the Gibbs measure for a H\"{o}lder continuous potential that is not cohomologous to a constant. In this paper we study recurrence rates for the dynamical system $(\Sigma_{A},\sigma)$ that hold $\mu$-almost surely. In particular, given a function $\psi:\N\to \N$ we are interested in the following set $$R_{\psi}=\{\iii\in \Sigma_{A}:i_{n+1}\ldots i_{n+\psi(n)+1}=i_1\ldots i_{\psi(n)}\textrm{ for infinitely many }n\in\N\}.$$

We provide sufficient conditions for $\mu(R_{\psi})=1$ and sufficient conditions for $\mu(R_{\psi})=0$. As a corollary of these results, we discover a new critical threshold where the measure of $R_{\psi}$ transitions from zero to one. This threshold was previously unknown even in the special case of a non-uniform Bernoulli measure defined on the full shift. The proofs of our results combine ideas from Probability Theory and Thermodynamic Formalism. In our final section we apply our results to the study of dynamics on self-similar sets.
\end{abstract}

\section{Introduction}

The notion of recurrence is fundamental in the study of Dynamical Systems and Ergodic Theory. The famous Poincar\'{e} Recurrence Theorem (see, e.g., \cite[Theorem 2.11]{EinsiedlerWard} or \cite[Theorem 1.4]{Wal}) states that if $T:X\to X$ is a measure-preserving transformation on a probability space $(X,\mathcal{B},\mu)$, then for any set $A\in \mathcal{B}$ satisfying $\mu(A)>0,$ we have that $\mu$-almost every $x\in A$ satisfies $T^{n}(x)\in A$ for infinitely many $n\in\mathbb{N}$. Under some modest assumptions this measure theoretic result can be upgraded to a metric one. Indeed if $X$ is equipped with a metric $d$ so that $(X,d)$ is separable and $\mathcal{B}$ is the Borel $\sigma$-algebra, then Poincar\'{e}'s theorem implies that for $\mu$-almost every $x\in X$ we have $$\liminf_{n\to\infty}d(T^{n}(x),x)=0.$$ When $d$ is a metric so that $(X,d)$ is separable and $\mathcal{B}$ is the Borel $\sigma$-algebra, we refer to $(X,\mathcal{B},\mu,T,d)$ as a metric measure-preserving system or a m.m.p.s. It is natural to wonder whether the conclusion $\liminf_{n\to\infty}d(T^{n}(x),x)=0$ for $\mu$-almost every $x$ can be strengthened into something more quantitative. The first result obtained in this direction is the following extremely general quantitative recurrence result obtained by  Boshernitzan in \cite[Theorem 1.2]{Bos}.
\begin{theorem}[Boshernitzan, \cite{Bos}]
	\label{BosTheorem}
Let $(X,\mathcal{B},\mu,T,d)$ be a m.m.p.s. Assume that for some $\alpha >0$ the $\alpha$-dimensional Hausdorff measure $\mathcal{H}^{\alpha}$ is $\sigma$-finite on $(X,d)$. Then for $\mu$-almost every $x\in X$ we have $$\liminf_{n\to\infty}n^{1/\alpha}d(T^n(x),x)<\infty.$$ Moreover, if $\mathcal{H}^{\alpha}(X)=0$, then for $\mu$-almost every $x\in X$ we have $$\liminf_{n\to\infty}n^{1/\alpha}d(T^n(x),x)=0.$$
\end{theorem}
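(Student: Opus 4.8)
The plan is to derive both conclusions from a single mechanism that converts the fine covering structure of $(X,d)$ into quantitative recurrence by way of Kac's lemma. The essential technical decision is to run the argument with countable Borel \emph{partitions} of vanishing mesh rather than with balls: the definition of $\mathcal{H}^{\alpha}$ gives no control of $\mu(B(x,r))$, whereas a partition obtained by disjointifying an economical cover inherits a bound $\sum_{P}(\diam P)^{\alpha}\le\mathcal{H}^{\alpha}(X)+1$ whenever the latter is finite.

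First I would reduce the case ``$\mathcal{H}^{\alpha}$ $\sigma$-finite'' to the case $\mathcal{H}^{\alpha}(X)<\infty$. Writing $X=\bigcup_{k}X_{k}$ with $\mathcal{H}^{\alpha}(X_{k})<\infty$ and applying the finite case (proved below) to the induced m.m.p.s.\ $(X_{k},T_{X_{k}},\mu|_{X_{k}}/\mu(X_{k}),d)$, which again has finite $\mathcal{H}^{\alpha}$, one obtains for $\mu|_{X_{k}}$-a.e.\ $x\in X_{k}$ a sequence $m_{j}\to\infty$ along which $m_{j}^{1/\alpha}d(T_{X_{k}}^{m_{j}}x,x)$ stays bounded. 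Since Kac's lemma makes the first return time to $X_{k}$ integrable, Birkhoff's ergodic theorem gives $n_{j}\le D(x)\,m_{j}$ for the $j$-th return time $n_{j}$ of $x$ to $X_{k}$, and the identity $d(T^{n_{j}}x,x)=d(T_{X_{k}}^{m_{j}}x,x)$ transfers the estimate to $(X,T)$. As $\mathcal{H}^{\alpha}(X)=0$ already implies $\mathcal{H}^{\alpha}(X)<\infty$, it remains to prove: if $\mathcal{H}^{\alpha}(X)<\infty$ then $\liminf_{n\to\infty}n^{1/\alpha}d(T^{n}x,x)<\infty$ $\mu$-a.e., and $=0$ $\mu$-a.e.\ if moreover $\mathcal{H}^{\alpha}(X)=0$.

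For the main argument, for each $k\in\N$ I would fix a countable Borel partition $\mathcal{P}^{(k)}$ of $X$ with $\sup_{P\in\mathcal{P}^{(k)}}\diam P\to0$ as $k\to\infty$ and $\sum_{P\in\mathcal{P}^{(k)}}(\diam P)^{\alpha}\le H'$, where $H'=\mathcal{H}^{\alpha}(X)+1$ in general and where $H'$ may be replaced by some $\eta_{k}\downarrow0$ when $\mathcal{H}^{\alpha}(X)=0$ (this uses only $\mathcal{H}^{\alpha}_{\delta}\le\mathcal{H}^{\alpha}$ together with disjointification of a $\delta$-cover). Let $P^{(k)}(x)$ be the atom of $\mathcal{P}^{(k)}$ containing $x$, and $r_{P}(x)=\min\{n\ge1:T^{n}x\in P\}$ the first return time to $P$. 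Two one-line facts, valid for every $k$ and all $L>0$, $C\ge1$, then do the work: (a) \emph{(covering)} the ``light-atom set'' $\{x:\mu(P^{(k)}(x))<L(\diam P^{(k)}(x))^{\alpha}\}$ is a union of such atoms, hence has measure $\le L\sum_{P}(\diam P)^{\alpha}\le LH'$; (b) \emph{(Kac)} for each atom $P$ one has $\int_{P}r_{P}\,d\mu=\mu\bigl(\bigcup_{n\ge0}T^{-n}P\bigr)\le1$, whence Markov's inequality yields $\mu\{x\in P:r_{P}(x)>C/\mu(P)\}\le\mu(P)/C$, and summation over $P$ gives $\mu\{x:r_{P^{(k)}(x)}(x)>C/\mu(P^{(k)}(x))\}\le1/C$. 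Intersecting the two good events at the \emph{same} scale $k$, the set
\[
G_{k}:=\Bigl\{\,x:\ \mu(P^{(k)}(x))\ge L\bigl(\diam P^{(k)}(x)\bigr)^{\alpha}\ \ \text{and}\ \ r_{P^{(k)}(x)}(x)\le C/\mu(P^{(k)}(x))\,\Bigr\}
\]
satisfies $\mu(G_{k})\ge1-LH'-1/C$ for every $k$, so (with $\limsup_{k}G_{k}$ the set of points lying in infinitely many $G_{k}$) $\mu(\limsup_{k}G_{k})\ge\liminf_{k}\mu(G_{k})\ge1-LH'-1/C$. If $x\in\limsup_{k}G_{k}$, then for infinitely many $k$ the return time $n_{k}:=r_{P^{(k)}(x)}(x)$ obeys $d(T^{n_{k}}x,x)\le\diam P^{(k)}(x)$ (since $T^{n_{k}}x$ and $x$ lie in the common atom) together with $n_{k}\le C/\mu(P^{(k)}(x))\le(C/L)(\diam P^{(k)}(x))^{-\alpha}$, so $n_{k}^{1/\alpha}d(T^{n_{k}}x,x)\le(C/L)^{1/\alpha}$; moreover $n_{k}\to\infty$ along a subsequence of these $k$ (else $x$ would be periodic), and hence $\liminf_{n\to\infty}n^{1/\alpha}d(T^{n}x,x)\le(C/L)^{1/\alpha}$. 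Thus $\mu\{x:\liminf_{n\to\infty}n^{1/\alpha}d(T^{n}x,x)\le(C/L)^{1/\alpha}\}\ge1-LH'-1/C$. Letting $C\to\infty$ with $L=C^{-2}$ proves the first statement; when $\mathcal{H}^{\alpha}(X)=0$ the term $LH'$ becomes $L\eta_{k}\to0$, so the bound improves to $\ge1-1/C$ for \emph{every} $L$, and the choice $L=C^{2}\to\infty$ (so $(C/L)^{1/\alpha}=C^{-1/\alpha}\to0$) forces the liminf to be $0$ $\mu$-a.e. (Periodic points, the $\mu$-atoms — which are periodic, by Poincar\'{e} recurrence applied to a singleton of positive measure — and the $\mu$-null set $\{x:\mu(P^{(k)}(x))=0$ for some $k\}$ are discarded at the outset.)

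The step I expect to be the main obstacle is precisely this coupling. Each of the events ``$P^{(k)}(x)$ is $\alpha$-heavy'' and ``$x$ re-enters $P^{(k)}(x)$ within $C/\mu(P^{(k)}(x))$ steps'' has measure close to $1$ \emph{uniformly in the scale $k$}, yet neither holds for \emph{all} large $k$, so one cannot simply intersect a $\limsup$ over scales with a $\liminf$ over scales; the argument works only because the error terms $LH'$ and $1/C$ do not depend on $k$, which permits intersecting the two events scale by scale before passing to $\limsup_{k}$. The closely related point — and the reason the reduction to $\mathcal{H}^{\alpha}(X)<\infty$ is performed first — is that estimate (a) genuinely needs $\sum_{P}(\diam P)^{\alpha}$ to be finite, which for a merely $\sigma$-finite space is unavailable at the level of covers.
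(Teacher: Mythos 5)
The paper does not prove this theorem: it is stated as background and cited directly from Boshernitzan's original article \cite{Bos}, so there is no in-paper proof to compare against. Assessed on its own terms, your argument is correct and is in fact a faithful reconstruction of Boshernitzan's original approach (economical covers of controlled total $\alpha$-content, Kac's lemma to bound return times in expectation, a Markov inequality to control them pointwise, and an intersection of the two good events scale-by-scale before passing to a $\limsup$). A few small points worth tightening if you were to write this out in full: in the $\sigma$-finiteness reduction, what you call the ``$j$-th return time $n_{j}$'' is really the $m_{j}$-th return time $n_{m_{j}}$, i.e.\ $T_{X_{k}}^{m_{j}}x=T^{n_{m_{j}}}x$, and the Birkhoff bound $n_{m}\le D(x)\,m$ only holds for $m$ large, which is harmless for a $\liminf$; the disjointification step needs the cover sets to be Borel, which one arranges by replacing each $U_{i}$ by its closure in $X$ (same diameter); and the discussion of $\mu$-atoms and periodic points is slightly over-engineered, since any periodic point satisfies $\liminf_{n}n^{1/\alpha}d(T^{n}x,x)=0$ outright and can be excluded without ceremony. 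One could also bypass the induced-system reduction entirely by running the partition argument directly on each $X_{k}$: Kac still gives $\int_{P}r_{P}\,d\mu\le1$ for any atom $P\subset X_{k}$ of positive $\mu$-measure, and the resulting lower bound $\mu\{x\in X_{k}:\liminf<\infty\}\ge\mu(X_{k})-LH'_{k}-1/C$ yields the conclusion on each $X_{k}$ without ever invoking the first-return map; but your reduction is equally valid.
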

Throughout this paper we denote by $\cH^{\alpha}(X)$ the \emph{$\alpha$-dimensional Hausdorff measure} of a set $X,$ and we write $\dimh{X}$ to denote the \emph{Hausdorff dimension} of $X$. We refer the reader to \cite{Fal} for definitions and further information regarding Hausdorff measures and dimension.

A limitation of Boshernitzan's theorem is that the recurrence rates it provides do not exhibit a dependence on the measure $\mu$, which is contrary to what one would expect. It could well be the case that Theorem~\ref{BosTheorem} does not allow us to conclude an optimal recurrence rate that holds for $\mu$-almost every $x$. This issue was partly addressed in a paper by Barreira and Saussol \cite{BarSau}. In particular they proved the following statement which appears as \cite[Theorem 3]{BarSau}.
\begin{theorem}[Barreira and Saussol, \cite{BarSau}]
	\label{BarSauTheorem}
	If $T:X\to X$ is a Borel measurable map on $X\subset\mathbb{R}^d$, and $\mu$ is a $T$-invariant probability measure on $X$, then for $\mu$-almost every $x\in X,$ we have
	 $$\liminf_{n\to\infty}n^{1/\alpha}d(T^n(x),x)=0\textrm{ for any }\alpha>\liminf_{r\to 0}\frac{\log \mu(B(x,r))}{\log r}.$$
\end{theorem}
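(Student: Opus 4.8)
The plan is to recast the claim as a quantitative non‑recurrence estimate and kill it with Kac's formula plus a covering argument. Write $\underline{d}_{\mu}(x):=\liminf_{r\to 0}\frac{\log\mu(B(x,r))}{\log r}$. It suffices to show that for every triple of rationals $0<\beta<\alpha$ and $\eps>0$ the set
$$Z=\Big\{x\in X:\ \underline{d}_{\mu}(x)<\beta\ \text{and}\ \liminf_{n\to\infty}n^{1/\alpha}d(T^{n}x,x)\ge\eps\Big\}$$
satisfies $\mu(Z)=0$: there are only countably many such triples, and for a real $\alpha>\underline{d}_{\mu}(x)$ one chooses rationals with $\underline{d}_{\mu}(x)<\beta<\alpha'<\alpha$ and transfers the conclusion for $(\alpha',\beta,\cdot)$ to $\alpha$ via $n^{1/\alpha}\le n^{1/\alpha'}$. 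Fixing the triple, note that $\liminf_{n}n^{1/\alpha}d(T^{n}x,x)\ge\eps$ forces $d(T^{n}x,x)\ge\tfrac{\eps}{2}n^{-1/\alpha}$ for all $n\ge N$ once $N=N(x)$ is large, so $Z=\bigcup_{N\in\N}Z_{N}$ with $Z_{N}=\{x:\underline{d}_{\mu}(x)<\beta,\ d(T^{n}x,x)\ge\tfrac{\eps}{2}n^{-1/\alpha}\ \forall n\ge N\}$, and it is enough to prove $\mu(Z_{N})=0$ for each fixed $N$.

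The first ingredient is Kac's formula at a fixed scale. For a Borel set $U$ with $\mu(U)>0$, Poincar\'e recurrence shows $\mu$-a.e.\ $y\in U$ returns to $U$ infinitely often; if $R_{N}^{U}(y)$ denotes the time of its $N$-th return, then, since the first-return map preserves $\mu|_{U}$, one has $\int_{U}R_{N}^{U}\,d\mu=N\int_{U}\tau_{U}\,d\mu\le N\mu(X)=N$, so by Markov's inequality $\mu\{y\in U:R_{N}^{U}(y)>t\}\le N/t$. Hence if $R_{N}^{U}(y)\le t$ then $T^{n}y\in U$ for some $n\in[N,t]$, so $d(T^{n}y,y)\le\diam U$. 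The second ingredient is a covering: for each $i\in\N$ fix a countable family $\mathcal C_{i}$ of balls of radius $2^{-i+1}$ centred on a maximal $2^{-i}$-separated subset of $\R^{d}$; this covers $\R^{d}$ with multiplicity at most a dimensional constant $C_{d}$ (so $\sum_{U\in\mathcal C_{i}}\mu(U)\le C_{d}$), and every $x$ lies in some $U\in\mathcal C_{i}$ with $B(x,2^{-i})\subseteq U$. One checks that $\underline{d}_{\mu}(x)<\beta$ implies $\mu(B(x,2^{-i}))\gtrsim 2^{-i\beta}$ for infinitely many $i$, so for arbitrarily large $i$ there is $U\in\mathcal C_{i}$ with $x\in U$, $\diam U\lesssim 2^{-i}$ and $\mu(U)\gtrsim 2^{-i\beta}$ (implied constants depending only on $\beta,d$).

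Now fix $N$, put $\theta_{i}:=K\,2^{-i(\alpha-\beta)}$ for a large constant $K=K(N,\eps,\alpha,\beta)$, and let $i_{0}$ be large enough that $\theta_{i}<1$ for $i\ge i_{0}$. For $i\ge i_{0}$ and $U\in\mathcal C_{i}$ with $\mu(U)\gtrsim 2^{-i\beta}$ set $U^{\mathrm{bad}}:=\{y\in U:R_{N}^{U}(y)>N/(\theta_{i}\mu(U))\}$, so $\mu(U^{\mathrm{bad}})\le\theta_{i}\mu(U)$. For $y\in U\setminus U^{\mathrm{bad}}$ there is an $n$ with $N\le n\lesssim\theta_{i}^{-1}2^{i\beta}$ and $d(T^{n}y,y)\lesssim 2^{-i}$, so
$$n^{1/\alpha}d(T^{n}y,y)\ \lesssim\ \theta_{i}^{-1/\alpha}\,2^{-i(\alpha-\beta)/\alpha},$$
and with $\theta_{i}=K2^{-i(\alpha-\beta)}$ the right-hand side equals a constant times $K^{-1/\alpha}$, which is $<\eps/2$ once $K$ is large; as this contradicts $y\in Z_{N}$, we get $Z_{N}\cap U\subseteq U^{\mathrm{bad}}$. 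Since each $x\in Z_{N}$ lies in such a $U$ for arbitrarily large $i$, for every $i_{0}$
$$\mu(Z_{N})\ \le\ \sum_{i\ge i_{0}}\ \sum_{\substack{U\in\mathcal C_{i}\\ \mu(U)\gtrsim 2^{-i\beta}}}\mu(U^{\mathrm{bad}})\ \le\ \sum_{i\ge i_{0}}\theta_{i}\sum_{U\in\mathcal C_{i}}\mu(U)\ \le\ C_{d}K\sum_{i\ge i_{0}}2^{-i(\alpha-\beta)}\ \xrightarrow[i_{0}\to\infty]{}\ 0,$$
so $\mu(Z_{N})=0$, and hence $\mu(Z)=0$.

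I expect the crux to be the convergence of this last series: a naive first Borel--Cantelli argument over all dyadic scales produces terms of constant order and diverges, and the resolution is to let the slack $\theta_{i}$ allowed in the return-time estimate decay geometrically in $i$ while still keeping $n^{1/\alpha}d(T^{n}y,y)<\eps/2$ — this is exactly where the hypothesis $\alpha>\beta$ is used. The remaining points (Kac's bound without any ergodicity assumption, the construction and bounded multiplicity of the covers $\mathcal C_{i}$, the passage from $\mu(B(x,r))>r^{\beta}$ along a sequence $r\to 0$ to its dyadic version, and the measurability of $x\mapsto\underline{d}_{\mu}(x)$ and of the sets $Z_{N}$ — the last of which can in any case be sidestepped by working with outer measure) are all routine.
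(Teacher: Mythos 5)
The paper states Theorem~\ref{BarSauTheorem} only as a citation to Barreira and Saussol \cite{BarSau} and does not reproduce a proof, so there is no internal argument to compare against. Your reconstruction is correct and self-contained, and its two ingredients --- the Kac-type inequality $\int_U R^U_N\,d\mu\le N$ for the $N$-th return time to a Borel set $U$ of positive measure (valid, as you note, with neither ergodicity nor invertibility, since the first-return map still preserves $\mu|_U$ and $\int_U\tau_U\,d\mu\le 1$), together with a bounded-multiplicity dyadic cover of $\R^d$ and a Borel--Cantelli summation --- are essentially the tools used in \cite{BarSau}, where the result is deduced from the a.e.\ bound $\liminf_{r\to0}\frac{\log\tau_r(x)}{-\log r}\le\underline d_\mu(x)$. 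Your write-up is somewhat more direct in that it bypasses the intermediate recurrence-rate dimension and proves the target statement head-on. The two places you identify as the crux are handled correctly: the decomposition into the sets $Z_N$ is what forces the use of the $N$-th rather than the first return time (so that the produced return time $n$ is $\ge N$), and letting the slack $\theta_i=K\,2^{-i(\alpha-\beta)}$ decay geometrically --- which is exactly where $\alpha>\beta$ is consumed --- makes $\sum_i\theta_i$ converge while keeping $n^{1/\alpha}d(T^ny,y)\asymp K^{-1/\alpha}<\eps/2$. The passage from $\underline d_\mu(x)<\beta$ to $\mu(B(x,2^{-i}))\gtrsim 2^{-i\beta}$ for infinitely many $i$, the reduction to rational triples $(\alpha',\beta,\eps)$, and the remark that possible non-measurability of $Z_N$ is immaterial because $Z_N$ is shown to be contained in a measurable null set, are all fine. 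I see no gaps.
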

Following on from these two important results, two separate research streams have arisen. The first of these streams takes a dynamical system and a recurrence rate, and tries to determine the Hausdorff dimension of the set of points that satisfy this recurrence rate. This line of research was pursued by Tan and Wang in \cite{TanWang} where the underlying dynamical system was the $\beta$-transformation. Seuret and Wang obtained similar results in \cite{SeuWang} for dynamical systems arising from the study of self-conformal sets. The second stream is more measure theoretic in nature. Given a metric measure-preserving system $(X,\mathcal{B},\mu,T,d)$ and a function $\psi:\mathbb{N}\to [0,\infty)$, this stream seeks to find simple criteria which determine the measure of the set
\begin{equation*}
\textit{R}(\psi):=\left\{x\in X:d(T^{n}(x),x)\leq \psi(n)\textrm{ for infinitely many }n\in\mathbb{N} \right\}.
\end{equation*} {This type of problem has been studied in great depth in the context of so called {\it shrinking target problems}. In the shrinking target framework, instead of studying those $x$ satisfying $d(T^{n}(x),x)\leq \psi(n)$ for infinitely many $n$, we fix a $y\in X$ and study those $x$ satisfying $d(T^{n}(x),y)\leq \psi(n)$ for infinitely many $n$. For more on shrinking target problems we refer the reader to \cite{Ath,CheKle,HillVel,HillVel2,KelYu,LLVZW} and the references therein.} Part of the motivation behind the recent activity in this area is to bring the quantitative recurrence theory in line with the theory of shrinking targets. Drawing an analogy with shrinking target problems, it is reasonable to expect that $\mu(R(\psi))$ should be determined by some naturally occurring volume sum. In particular, it is reasonable to expect that there exists a monotone function $f:[0,\infty)\to[0,\infty)$ for which the following holds
\begin{equation}
\label{volume sums}
 \mu(\textit{R}(\psi)) = \left\{ \begin{array}{ll}
1 & \mbox{if $\sum_{n=1}^{\infty}f(\psi(n))=\infty$},\\[2ex]
0 & \mbox{if $\sum_{n=1}^{\infty}f(\psi(n))<\infty$}.\end{array} \right.
\end{equation}
Typically we might expect $f$ to be of the form $f(x)=x^{\gamma}$ where $\gamma>0$ is the Hausdorff dimension of the measure. This principle has been verified in various contexts by several authors. It was shown to be the case for certain natural maps defined on attractors of iterated function systems by Chang et al.~in \cite{ChangWuWu}, and by the second author and Farmer in \cite{BakFar}. Hussain et al.~in \cite{HLSW} gave general conditions for a dynamical system to guarantee that \eqref{volume sums} holds. This result applies to many well known dynamical systems including the $\beta$-transformation and the Gauss map. Further refinements were obtained by Kirsebom et al.~in \cite{KKP}. They obtained positive results for a more general class of dynamical system than was previously considered in \cite{BakFar,ChangWuWu,HLSW}. They also considered a more general notion of recurrence where the function $\psi$ was also allowed to depend upon the point $x$. Recently Kleinbock and Zheng proved a general result that implies a zero-one law for both the shrinking target problem and the quantitative recurrence problem \cite{KleZhe}. This result applies to a general family of expanding dynamical systems satisfying suitable bounded distortion assumptions and for which the underlying measure satisfies the following property: there exist constants $c_1,c_2,\alpha>0$ such that for any $x\in X$ and $r\in(0,Diam(X))$ we have 
\[c_1r^{\alpha}\leq \mu(B(x,r))\leq c_2r^{\alpha}.\] Measures satisfying this property are called \emph{Ahlfors regular measures}. 
We conclude this overview of related works by mentioning a paper by Chazottes and Ugalde \cite{ChaUg}. Just as we do in this paper, the authors of \cite{ChaUg} study Gibbs measures for shifts of finite type. In their paper they obtained almost sure bounds for the first time a sequence returns under the left-shift to the cylinder determined by its first $n$ entries. For certain functions $\psi$ these bounds can be used to obtain measure statements for $\textit{R}(\psi)$. However, the class of $\psi$ for which this can be done is quite restrictive and does not contain many natural choices of $\psi$ one would be interested in. This is not surprising given that the bounds stated in \cite{ChaUg} apply to all first returns, whereas $\textit{R}(\psi)$ is defined in terms of a subsequence of returns. In this paper we obtain detailed information on the measure of the set $R(\psi)$ that goes beyond that which can be derived from \cite{ChaUg}.

In this paper we study recurrence rates for shifts of finite type that hold $\mu$-almost surely with respect to a Gibbs measure $\mu$. Of special interest will be the case where {the defining potential of $\mu$ has positive variance with respect to $\mu$. For short, we say that {\it $\mu$ has positive variance} in this case.} This seemingly innocuous assumption leads to a much richer theory and a wider range of behaviour than was previously observed in \cite{BakFar,ChangWuWu,HLSW,KKP,KleZhe}. The main results of these papers each assume that the underlying measure $\mu$ satisfies some additional uniformity assumption. Often this uniformity assumption is that the measure is Ahlfors regular. This assumption is very useful for technical reasons, and often means that one can study the measure of $R(\psi)$ using a similar toolkit to that which one would use to study shrinking target problems. However, this uniformity assumption rules out measures $\mu$ with positive variance. These measures are often highly non-uniform and exhibit strong multifractal behaviour. Importantly, it is precisely this non-uniformity that leads to the richer theory and the wider range of behaviour mentioned above. We will properly formalise our results in the next section. For now we state the following result which illustrates the phenomenon described in this paragraph. This result is a special case of Theorem \ref{loglogtheorem}.
\begin{theorem}
	\label{Example theorem}
Let $(p_i)_{i=1}^{K}$ be a non-uniform\footnote{$p_i\neq 1/K$ for some $i$. This condition ensures positive variance.} probability vector and let $\mu$ be the corresponding Bernoulli measure defined on the full shift $\Sigma=\{1,\ldots,K\}^{\mathbb{N}}$. Equip $\Sigma$ with the metric $d$ given by $d(\iii,\jjj)=K^{-|\iii\wedge\jjj|}$ where $|\iii\wedge\jjj|:=\inf\{n\geq0:i_{n+1}\neq j_{n+1}\}$. Then there exist constants $h,\rho>0$ such that the following holds: for  $\varepsilon>0$  let $\psi_{\varepsilon}^{+}:\mathbb{N}\to\mathbb{N}$ and $\psi_{\varepsilon}^{-}:\mathbb{N}\to\mathbb{N}$ be given by: $$\psi_{\varepsilon}^{+}(n)=\left\lfloor \frac{\log n}{h}+\frac{(1+\varepsilon)}{h^{3/2}}\sqrt{2\rho\log n\log \log \log n}\right \rfloor$$ and $$\psi_{\varepsilon}^{-}(n)=\left\lfloor \frac{\log n}{h}+\frac{(1-\varepsilon)}{h^{3/2}}\sqrt{2\rho\log n\log \log \log n}\right \rfloor.$$ We then let $\Psi_{\varepsilon}^{+}:\mathbb{N}\to[0,\infty)$ and $\Psi_{\varepsilon}^{-}:\mathbb{N}\to[0,\infty)$ be given by $\Psi_{\varepsilon}^{+}(n)=K^{-\psi_{\varepsilon}^{+}(n)}$ and \mbox{$\Psi_{\varepsilon}^{-}(n)=K^{-\psi_{\varepsilon}^{-}(n)}$}. Then for any $\varepsilon>0$ we have $\mu(R(\Psi_{\varepsilon}^{+}))=0$ and $\mu(R(\Psi_{\varepsilon}^{-}))=1.$
\end{theorem}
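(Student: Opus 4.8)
The plan is to recast the recurrence condition as a block-matching problem and then extract the answer from the law of the iterated logarithm for the Birkhoff sums of the potential $\log p_{i_1}$. Write $\phi(\iii)=\log p_{i_1}$, so that $S_m\phi(\iii):=\sum_{k=0}^{m-1}\phi(\sigma^k\iii)=\log\mu([i_1\cdots i_m])$ is a sum of i.i.d.\ random variables of mean $-h$, where $h=-\sum_i p_i\log p_i$, and of variance $\rho=\sum_i p_i(\log p_i)^2-h^2$; the non-uniformity of $(p_i)$ is precisely the assumption $\rho>0$. Put $W_m(\iii):=S_m\phi(\iii)+hm$ for the associated centred random walk. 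Since $d(\sigma^n\iii,\iii)\le K^{-\ell}$ is equivalent to $i_{n+1}\cdots i_{n+\ell}=i_1\cdots i_\ell$, with $\psi=\psi_\varepsilon^{\pm}$ the set $R(\Psi_\varepsilon^{\pm})$ is exactly $\{\iii:\ i_{n+1}\cdots i_{n+\psi(n)}=i_1\cdots i_{\psi(n)}\ \text{for infinitely many }n\}$. I would partition $\N$ into the finite level sets $I_m:=\{n:\psi(n)=m\}$; as $\psi(n)=\tfrac{\log n}{h}+o(\log n)$ is non-decreasing with $\psi(n)\to\infty$, each $I_m$ is an interval $[a_m,a_{m+1})$, and solving $\psi(a_m)=m$ — the key point being that $\log\log\log n\sim\log\log m$ once $\log n\sim hm$, so the ``triple log'' in $n$ is in reality a ``double log'' in $m$ — yields
\[
\log a_m=hm-c\sqrt{2\rho m\log\log m}\,(1+o(1)),\qquad \log|I_m|=hm-c\sqrt{2\rho m\log\log m}\,(1+o(1)),
\]
with $c=1\pm\varepsilon$. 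Writing $A_m$ for the event that $i_{n+1}\cdots i_{n+m}=i_1\cdots i_m$ for some $n\in I_m$, we have $R(\Psi_\varepsilon^{\pm})=\{A_m\ \text{infinitely often}\}$.

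Next I would estimate $\mathbb P(A_m\mid\mathcal F)$, where $\mathcal F$ records the first $m$ symbols of $\iii$ (equivalently, the value of $W_m$). Conditioned on those symbols, the cylinder $[i_1\cdots i_m]$ has measure $p_m:=e^{-hm+W_m}$, and for $n\in I_m$ (so that $n\ge a_m\gg m$) the block $i_{n+1}\cdots i_{n+m}$ is independent of $\mathcal F$; restricting to a subfamily of $\asymp|I_m|/m$ of those $n$ which are pairwise at least $m$ apart makes the corresponding matching events i.i.d.\ Bernoulli$(p_m)$, so
\[
1-\exp\!\bigl(-\tfrac{p_m|I_m|}{2m}\bigr)\ \le\ \mathbb P(A_m\mid\mathcal F)\ \le\ \min\bigl(1,\,p_m|I_m|\bigr).
\]
Together with $\log|I_m|-hm=-c\sqrt{2\rho m\log\log m}\,(1+o(1))$ this shows, for fixed $\delta>0$ and all large $m$: if $W_m\ge(c+\delta)\sqrt{2\rho m\log\log m}$ then $p_m|I_m|/m\to\infty$ and $\mathbb P(A_m\mid\mathcal F)\to1$; whereas if $W_m\le(c-\delta)\sqrt{2\rho m\log\log m}$ then $\mathbb P(A_m\mid\mathcal F)\le\exp\bigl(-\tfrac\delta2\sqrt{2\rho m\log\log m}\bigr)$, which is summable in $m$. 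Thus, up to a window negligible on the scale of the iterated logarithm, $A_m$ occurs according to whether $W_m$ exceeds $c\sqrt{2\rho m\log\log m}$ — that is, according to the Hartman--Wintner law of the iterated logarithm, which (using $\rho>0$) states that $\limsup_{m\to\infty}W_m/\sqrt{2\rho m\log\log m}=1$ almost surely.

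Finally I would conclude with a conditional Borel--Cantelli argument. Choose a filtration $\mathcal G_m=\sigma(i_1,\dots,i_{N_m})$ with each $N_{m-1}$ placed between the last coordinate used by $A_{m-1}$ and the first coordinate used by the ``second half'' of $I_m$ — possible because $a_{m+1}-a_m\asymp a_m\gg m$ — so that $A_m\in\mathcal G_m$ and, by independence of coordinates, $\mathbb P(A_m\mid\mathcal G_{m-1})=\mathbb P(A_m\mid\mathcal F)$. Lévy's extension of the Borel--Cantelli lemma then gives $\{A_m\ \text{i.o.}\}=\{\sum_m\mathbb P(A_m\mid\mathcal G_{m-1})=\infty\}$ almost surely. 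For $\psi_\varepsilon^{+}$ (so $c=1+\varepsilon$): by the LIL, almost surely $W_m\le(1+\tfrac\varepsilon2)\sqrt{2\rho m\log\log m}$ for all large $m$, so taking $\delta=\tfrac\varepsilon2$ the upper estimate forces $\sum_m\mathbb P(A_m\mid\mathcal G_{m-1})<\infty$ almost surely, hence $\mu(R(\Psi_\varepsilon^{+}))=0$. For $\psi_\varepsilon^{-}$ (so $c=1-\varepsilon$): by the LIL, almost surely $W_m\ge(1-\tfrac\varepsilon2)\sqrt{2\rho m\log\log m}$ for infinitely many $m$, so taking $\delta=\tfrac\varepsilon2$ the lower estimate gives $\mathbb P(A_m\mid\mathcal G_{m-1})\ge\tfrac12$ for infinitely many $m$, hence $\sum_m\mathbb P(A_m\mid\mathcal G_{m-1})=\infty$ almost surely and $\mu(R(\Psi_\varepsilon^{-}))=1$.

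The step I expect to be the main obstacle is the conditional bound on $\mathbb P(A_m\mid\mathcal F)$: one has to control the dependence among the overlapping matching events $\{i_{n+1}\cdots i_{n+m}=i_1\cdots i_m\}$, $n\in I_m$, tightly enough to justify the ``$\asymp|I_m|/m$ independent trials'' picture, and the subtle feature is that the decisive regime is exactly the one in which the prefix $i_1\cdots i_m$ is atypically likely, hence highly periodic, so that nearby occurrences are strongly correlated. The pairwise-separation device takes care of the lower bound, but doing everything cleanly, together with the filtration bookkeeping (which is the reason for working with half of each $I_m$ at a time), is where the real work lies. Computing $\log|I_m|$ asymptotically is routine but must be carried out carefully: it is precisely this computation that pins the threshold at the classical iterated-logarithm constant $c=1$.
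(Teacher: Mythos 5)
Your strategy is sound and genuinely different from the paper's, at least in the harder (full-measure) direction, so it is worth spelling out the comparison. For the measure-zero half both arguments are essentially the same: union bound conditional on the first $m$ symbols, the asymptotics $\log|I_m|=hm-c\sqrt{2\rho m\log\log m}\,(1+o(1))$, the upper half of the LIL, and Borel--Cantelli. For the full-measure half the routes diverge: the paper proves a far more general Theorem~\ref{thm:meas1} (valid for Gibbs measures on any topologically mixing SFT) via a multi-scale nested construction of cylinder families $D_k(\iii)$, with the loss at each stage controlled by Lemma~\ref{lem:Ci} (which in turn relies on exponential decay of correlations, Theorem~\ref{thm:decay}), and closes with the density lemma of Beresnevich--Dickinson--Velani. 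Your argument instead exploits exact product structure: separate the candidate return times in $I_m$ by at least $m$, condition on the first $m$ symbols, and the matching events become genuinely i.i.d.\ Bernoulli$(p_m)$, after which L\'evy's conditional Borel--Cantelli converts the lower half of the LIL directly into $\mu(R(\Psi_\varepsilon^-))=1$. This is shorter and more transparent, and your separation trick is exactly Lemma~\ref{lem:Ci} specialised to the Bernoulli case where the correlation-decay buffer $r_\ell$ is not needed. What you lose is generality: the approach will not extend to Gibbs measures on SFTs, which is precisely what the paper's machinery is built to handle.

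There is, however, a real gap in your measure-zero direction. To have $A_m\in\mathcal G_m$ and simultaneously $\mathbb P(A_m\mid\mathcal G_{m-1})=\mathbb P(A_m\mid\mathcal F)$, the event $A_m$ must be supported on coordinates in $\{1,\dots,m\}\cup\{N_{m-1}+1,\dots,N_m\}$, and since $A_{m-1}\in\mathcal G_{m-1}$ forces $N_{m-1}$ past $a_m+m-2$, the $A_m$ appearing in your L\'evy argument is necessarily the \emph{restricted} event using only the second half of $I_m$. L\'evy's Borel--Cantelli then gives that this restricted event occurs only finitely often almost surely, which is the wrong-way inclusion: it does not bound $\mu(R(\Psi_\varepsilon^+))$, which is governed by returns over all of $I_m$. (In the full-measure direction the inclusion goes the right way, so no issue there.) Two clean fixes: either run a parallel filtration for the ``first half'' events and take a union, or --- simpler, and what the paper does in Theorem~\ref{thm:convergence} --- drop L\'evy's lemma for this half altogether, restrict to $E_N=\{\iii:W_m\leq(1+\varepsilon/2)\sqrt{2\rho m\log\log m}\text{ for all }m\geq N\}$ (full measure after taking the union over $N$, by the LIL), observe that your union-bound estimate $p_m|I_m|\leq e^{-\frac{\varepsilon}{2}\sqrt{2\rho m\log\log m}(1+o(1))}$ holds deterministically on $E_N$ for $m\geq N$, and apply the plain first Borel--Cantelli lemma. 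With that repair the proposal is correct.
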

Theorem \ref{Example theorem} demonstrates a critical threshold which to the best of our knowledge has not previously been observed in the study of quantitative recurrence or shrinking target problems. We also emphasise that unlike in \cite{BakFar,ChangWuWu,HLSW,KKP,KleZhe}, the presence of the $\sqrt{2\rho\log n\log \log \log n}$ term appearing in Theorem \ref{Example theorem} means that there is no $\gamma>0$ such that for every $\psi:\mathbb{N}\to\mathbb{N}$ we have \eqref{volume sums} for some $f$ of the form $f(x)=x^{\gamma}$. \\

\noindent \textbf{Notation.} We end this introductory section by formalising some notation that we will use throughout. Given two positive real valued functions $f,g:S\to(0,\infty)$ defined on some set $S$, we write $f\ll g$ if there exists a constant $C>0$ such that $f(x)\leq Cg(x)$ for all $x\in S$. For each $k\in\mathbb{N}$ we write $\log^{(k)}$ to denote the function that is the $k$-fold composition of $\log$ with itself, e.g. $\log^{(3)}(x)=\log \log \log x.$ To avoid additional notational complexities, we will adopt the convention throughout that $\log x=0$ whenever $x\leq 1$.

\section{Background and statement of results}
\subsection{Background on Thermodynamic Formalism}
Let $K\geq2$ be an integer and let
\[\Sigma=\{1,\ldots,K\}^{\N}\]
be the set of infinite sequences formed by elements of the set $\{1,\ldots,K\}.$ For integers $n \geq 0$, let us denote by
\[\Sigma_n=\{1,\ldots,K\}^n\]
the set of words of length $n$, by convention $\Sigma_0=\emptyset$. We also let
\[\Sigma_*=\bigcup_{n=0}^\infty\Sigma_n\]
be the set of all finite words. For $\iii\in\Sigma_*$, denote by $|\iii|$ the length of $\iii$. For $\iii,\jjj\in\Sigma\cup\Sigma_*$, denote by $\iii\wedge\jjj$ the common prefix of $\iii$ and $\jjj$; that is, let
\[|\iii\wedge\jjj|:=\inf\{n\geq0:i_{n+1}\neq j_{n+1}\} \qquad \text{and} \qquad \iii\wedge\jjj=(i_1,\ldots,i_{|\iii\wedge\jjj|}).\]
Note that if $|\iii\wedge\jjj|=0$ then we define $\iii\wedge\jjj$ as the empty word, and if $|\iii\wedge\jjj|=\infty$ or $|\iii|=|\jjj|=|\iii \wedge \jjj|$ then $\iii\wedge\jjj=\iii=\jjj$. We equip $\Sigma$ with the metric given by
\[d(\iii,\jjj)=K^{-|\iii\wedge\jjj|}.\]
For a finite word $\iii\in\Sigma_*$ and for a sequence $\jjj\in\Sigma\cup\Sigma_*$, we denote by $\iii\jjj$ the concatenation of $\iii$ and $\jjj$. Moreover, for $\iii\in \Sigma_*$ and $B\subseteq\Sigma_*$, let
\[\iii B=\{\iii\jjj:\jjj\in B\}.\] For a word $\iii\in\Sigma\cup\Sigma_*$ and $1\leq n\leq m\leq|\iii|$, we let
\[\iii|_n^m=(i_n,\ldots,i_m).\]
Let $\sigma\colon\Sigma\mapsto\Sigma$ be the left-shift map; that is,
\[\sigma(i_1,i_2,\ldots)=(i_2,i_3,\ldots).\]

Given a $K\times K$ matrix $A$ consisting of zeros and ones we can define the corresponding \emph{shift of finite type} as follows:
$$\Sigma_{A}:=\left\{\iii\in \Sigma:A_{i_l,i_{l+1}}=1\textrm{ for all }l\in \N\right\}.$$ Notice that $\sigma$ is well defined as a map from $\Sigma_{A}$ to $\Sigma_{A}.$ Also, note that the full shift corresponds to the case when the matrix $A$ consists entirely of ones. We will always assume that there exists $M\in\mathbb{N}$ such that each entry of $A^{M}$ is strictly positive. This assumption means that the map $\sigma:\Sigma_{A}\to\Sigma_{A}$ is topologically mixing (for the definition of topologically mixing see \cite[Definition 1.8.2]{KatHas}). For each $n\in \N$ we let
$$\Sigma_{A,n}:=\left\{\iii\in \Sigma_n:A_{i_{l},i_{l+1}}=1\textrm{ for all }1\leq l\leq n-1 \right\}$$ and write
\[\Sigma_{A,*}=\bigcup_{n=1}^{\infty}\Sigma_{A,n}.\]
For an $\iii\in\Sigma_{A,*}$, let $[\iii]$ be the corresponding cylinder set of sequences in $\Sigma_{A}$ that begin with $\iii$, i.e. $$[\iii]:=\{\jjj\in\Sigma_{A}:\iii=j_1\ldots j_{|\iii|}\}.$$ For a subset $B\subseteq\Sigma_{A,*}$ we also let $[B]:=\bigcup_{\iii\in B}[\iii]$.

We call a map $f\colon\Sigma_{A}\mapsto\R$ \emph{H\"older-continuous} if there exist constants $b>0$ and $0<\alpha<1$ such that
\[|f(\iii)-f(\jjj)|\leq b\alpha^{|\iii\wedge\jjj|}\]
for all $\iii,\jjj\in \Sigma_{A}$. Let us define the \emph{pressure} of such an $f$ in the usual way by
$$
P(f)=\lim_{n\to\infty}\frac{1}{n}\log\left(\sum_{\iii\in\Sigma_{A,n}}\sup_{\jjj\in[\iii]}\exp\left(\sum_{k=0}^{n-1}f(\sigma^k\jjj)\right)\right).
$$
By \cite[Theorem~1.4, Proposition 1.13, and Theorem 1.16]{Bowen} there exists a unique $\sigma$-invariant ergodic measure $\mu$ on $\Sigma_{A}$ for which there exists a constant $C>1$ such that
\begin{equation}\label{eq:Gibbs}
C^{-1}\leq\dfrac{\mu([i_1,\ldots,i_n])}{\exp\left(-nP(f)+\sum_{k=0}^{n-1}f(\sigma^k\iii)\right)}\leq C
\end{equation}
for every $\iii\in \Sigma_A$ and $n\in\mathbb{N}$. We call  $\mu$ the {\it Gibbs measure} of the potential $f$. For the purpose of exposition, we suppress the dependence of $\mu$ on $f$ throughout. It can be seen from the definition of Gibbs measure that there exists a constant $C>1$ such that for every $\iii,\jjj\in\Sigma_{A,*}$ satisfying $\iii\jjj\in\Sigma_{A,*},$ we have
\begin{equation}\label{eq:quasiBernoulli}
	C^{-1}\leq\frac{\mu([\iii\jjj])}{\mu([\iii])\mu([\jjj])}\leq C.
\end{equation}
Without loss of generality, we may assume that the constants in \eqref{eq:Gibbs} and \eqref{eq:quasiBernoulli} are equal.

We define the \emph{entropy} of a Gibbs measure $\mu$ by
$$
h_\mu:=\lim_{n\to\infty}\frac{-1}{n}\sum_{\iii\in\Sigma_{A,n}}\mu([\iii])\log\mu([\iii]).
$$
It can be shown, see \cite[Theorem 1.22]{Bowen} for example, that
\begin{align} \label{variational principle}
h_\mu&=P(f)-\int f(\iii)d\mu(\iii).
\end{align}
We define the \emph{variance} of a Gibbs measure $\mu$ of some potential $f$ to be $$\rho_{\mu}:=\lim_{n\to\infty}\frac{1}{n}\int \left(\sum_{k=0}^{n-1}f(\sigma^{k}\iii)-n\int f(\jjj)\, d\mu(\jjj)\right)^2\, d\mu(\iii).$$ We emphasise that this limit always exists (see \cite{PP}). Clearly $\rho_{\mu}\geq 0$. It is well known that we have equality here if and only if $f$ is cohomologous to a constant, i.e. there exists a continuous function $g:\Sigma_{A}\to\mathbb{R}$ and $c\in\mathbb{R}$ such that $f=g\circ \sigma -g +c$. For a proof see \cite{PP}. We will often assume that $f$ is not cohomologous to a constant and therefore that $\rho_{\mu}>0$.

If $(p_i)_{i=1}^{K}$ is a probability vector and $\mu$ is the corresponding Bernoulli measure on the full shift $\Sigma,$ then $\mu$ can be realised as the Gibbs measure for the potential $f:\Sigma \to \mathbb{R}$ given by $f(\iii)=\log p_{i_1}$. We recall here that the Bernoulli measure corresponding to the probability vector $(p_i)_{i=1}^{K}$ is defined by assigning cylinders measure
\[\mu([\iii]) = p_{i_1} p_{i_1} \dots p_{i_{|\iii|}}\]
for each $\iii \in \Sigma_*$. An important consequence of this definition is that for any two words $\iii, \jjj \in \Sigma_*$, we have
\begin{align} \label{Bernoulli}
\mu([\iii\jjj]) = \mu([\iii])\mu([\jjj]).
\end{align}
For further details regarding the definition of Bernoulli measures and some of their properties we refer the reader to \cite[Chapter 4.2]{KatHas} and \cite{Wal}. We emphasise that when $\mu$ is a Bernoulli measure with corresponding probability vector $(p_i)_{i=1}^{K},$ then the entropy and variance of $\mu$ take the following simpler form:
$$h_\mu=-\sum_{i=1}^{K}p_i\log p_i$$ and $$\rho_{\mu}=\sum_{i=1}^{K}p_i(\log p_i)^2-\left(\sum_{i=1}^{K}p_i\log p_i\right)^2.$$ 
It can be seen that the potential $f$ corresponding to a Bernoulli measure $\mu$ is cohomologous to a constant if and only if $f$ is a constant function. Therefore $\rho_{\mu}>0$ if and only if $(p_i)_{i=1}^{K}$ is not the uniform probability vector.

\subsection{Statement of results}
Let $\psi\colon\N\mapsto\N$ and let $R_\psi$ be the set of infinite sequences that return infinitely often to the neighbourhood determined by $\psi$ when we apply the left shift. That is, let
\begin{equation}\label{eq:recset}
R_\psi:=\{\iii\in\Sigma_{A}:d(\sigma^n\iii,\iii)\leq K^{-\psi(n)}\text{ for infinitely many }n\in\mathbb{N}\}.
\end{equation} Note that this definition is slightly different to the definition of $\textit{R}(\psi)$ given earlier. This definition helps to simplify much of our analysis. In our subsequent proofs we will often make the assumption that $\psi$ satisfies $\psi(n)\leq n$ for all $n$ sufficiently large. Under this assumption $R_{\psi}$ has the following simple formulation
\begin{equation}
\label{eq:simpleform}
R_{\psi}=\bigcap_{n=1}^\infty\bigcup_{m=n}^\infty\bigcup_{\substack{\iii\in\Sigma_{A,\psi(m)}\\\jjj\in\Sigma_{A,m-\psi(m)}\\ \iii\jjj\iii\in \Sigma_{A,m+\psi(m)}}}[\iii\jjj\iii].
\end{equation}
The assumption that $\psi$ satisfies $\psi(n)\leq n$ for all $n$ sufficiently large is not especially restrictive. Indeed for any Gibbs measure $\mu$ it can be shown using the Borel--Cantelli Lemma (see Lemma~\ref{lem:BC}) and the Shannon--McMillan--Breiman Theorem \cite[Chapter 6, Theorem 2.3]{Petersen} that
$$\mu\left(\{\iii\in\Sigma_{A}:d(\sigma^k\iii,\iii)\leq K^{-k}\text{ for infinitely many }k\in\mathbb{N}\}\right)=0.$$
As such we can often assume without loss of generality that $\psi$ satisfies $\psi(n)\leq n$ for all $n$ sufficiently large.

The goal of this paper is to find “close to optimal" conditions for establishing a ``zero-one law'' for the measure of the set $R_\psi$. The following theorem provides sufficient conditions for $R_{\psi}$ to have measure zero.

\begin{theorem}\label{thm:convergence}
	Let $\mu$ be the Gibbs measure for a H\"older continuous potential that is not cohomologous to a constant. Let $\psi\colon\N\mapsto\N$ be such that there exists $\varepsilon>0$ for which
	$$
		\sum_{n=1}^\infty e^{-h_\mu\psi(n)+(1+\varepsilon)\sqrt{2\rho_\mu\psi(n)\log\log\psi(n)}}<\infty.
		$$
		Then $\mu(R_\psi)=0$.
\end{theorem}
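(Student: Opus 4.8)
The plan is to apply the Borel--Cantelli Lemma to the family of sets appearing in the union description~\eqref{eq:simpleform}, so the task reduces to estimating $\mu([\iii\jjj\iii])$ summed over the admissible words $\iii\in\Sigma_{A,\psi(m)}$, $\jjj\in\Sigma_{A,m-\psi(m)}$ with $\iii\jjj\iii\in\Sigma_{A,m+\psi(m)}$, and then summing over $m$. First I would note that we may assume $\psi(n)\le n$ eventually, as explained in the excerpt. Fixing $m$, the quasi-Bernoulli property~\eqref{eq:quasiBernoulli} gives $\mu([\iii\jjj\iii])\ll \mu([\iii])^2\mu([\jjj])$, and summing over all admissible $\jjj$ of length $m-\psi(m)$ (whose cylinders are disjoint and contained in $\Sigma_A$) yields $\sum_\jjj \mu([\iii\jjj\iii])\ll \mu([\iii])^2$. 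Hence the total mass of the $m$-th union is $\ll \sum_{\iii\in\Sigma_{A,\psi(m)}}\mu([\iii])^2$. So everything comes down to controlling the ``second moment'' quantity $S_k:=\sum_{\iii\in\Sigma_{A,k}}\mu([\iii])^2$ for $k=\psi(m)$, and showing $\sum_m S_{\psi(m)}<\infty$ under the stated hypothesis.

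The heart of the matter is therefore a sharp upper bound for $S_k$. Writing $\mu([\iii])\approx \exp(-kP(f)+\sum_{l=0}^{k-1}f(\sigma^l\iii))$ via~\eqref{eq:Gibbs}, we have $\mu([\iii])^2\approx \mu([\iii])\cdot \exp(-kP(f)+S_kf(\iii))$ where $S_kf$ denotes the Birkhoff sum. Summing against $\mu$, $S_k\approx e^{-kP(f)}\int \exp(S_kf(\iii))\,d\mu(\iii)$ up to the normalisation, and this is controlled by the moment generating function of the Birkhoff sums of $f$. By the central limit theorem / Gibbs property, $S_kf - k\int f\,d\mu$ behaves like a Gaussian of variance $k\rho_\mu$, but crucially to get the $\log\log$ refinement I need more than the CLT: I would invoke a moderate deviations or, better, an almost-sure-invariance-principle / law of the iterated logarithm type estimate. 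Concretely, the hypothesis's exponent $-h_\mu\psi(n)+(1+\varepsilon)\sqrt{2\rho_\mu\psi(n)\log\log\psi(n)}$ is exactly the shape one gets from the upper half of the LIL: the "worst" words $\iii$ of length $k$ contributing to $S_k$ are those whose Birkhoff sum deviates by about $\sqrt{2\rho_\mu k \log\log k}$ from the mean, and the measure of such a deviation set decays like the exponential of (a constant times) that deviation. I expect that a clean way to package this is a large/moderate deviation estimate of the form: for all $k$ and all $t>0$,
\begin{equation*}
\mu\!\left(\left\{\iii: S_kf(\iii) - k\!\int\! f\,d\mu \ge t\sqrt{k}\right\}\right)\ll \exp\!\left(-\frac{t^2}{2\rho_\mu} + o(t^2)\right),
\end{equation*}
uniformly in a suitable range of $t$, combined with a dyadic decomposition of the range of $S_kf$; multiplying the measure of each slab by the corresponding value of $\exp(S_kf)/e^{kP(f)}$ and summing should yield $S_k\ll \exp(-h_\mu k + C\sqrt{k\log\log k})$ for an explicit $C$ tied to $\rho_\mu$. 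I would want the constant to come out as $(1+\varepsilon')\sqrt{2\rho_\mu}$ for arbitrarily small $\varepsilon'$, so that under the summability hypothesis with its $(1+\varepsilon)$, choosing $\varepsilon'<\varepsilon$ makes $\sum_m S_{\psi(m)}$ converge.

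The main obstacle, as I see it, is making the second-moment/moderate-deviation estimate on $S_k$ genuinely sharp in the constant in front of $\sqrt{k\log\log k}$, rather than merely order-of-magnitude correct. A naive Chernoff bound using the analyticity of $k\mapsto P(f+qf)$ near $q=0$ gives $\mu(S_kf - k\int f \ge t\sqrt k)\le \exp(-t^2/(2\rho_\mu)(1+o(1)))$ only for $t=o(\sqrt k)$, which is exactly the moderate deviations regime we need since here $t\asymp\sqrt{\log\log k}=o(\sqrt k)$; but one must be careful that the $o(1)$ error, after being multiplied by $t^2\asymp\log\log k$, does not swamp the main term --- it does not, because $o(1)\cdot\log\log k = o(\log\log k)$, which is absorbed into the $\varepsilon$. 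A secondary technical point is handling the admissibility constraint $\iii\jjj\iii\in\Sigma_{A,m+\psi(m)}$: the concatenation requires $A_{i_{\psi(m)}, j_1}=1$ and $A_{j_{m-\psi(m)}, i_1}=1$, which only restricts a bounded number of letters and does not affect the exponential asymptotics (this is where topological mixing and the uniform constant in~\eqref{eq:quasiBernoulli} are used); I would dispatch it by the standard device of inserting a bounded-length connecting word or simply noting the constraints lose at most a multiplicative constant. Finally I would assemble the pieces: $\mu(\text{$m$-th union})\ll S_{\psi(m)}\ll e^{-h_\mu\psi(m)+(1+\varepsilon')\sqrt{2\rho_\mu\psi(m)\log\log\psi(m)}}$, whose sum over $m$ converges by hypothesis, so Borel--Cantelli gives $\mu(R_\psi)=0$.
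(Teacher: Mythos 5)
Your reduction of the Borel--Cantelli sum to the ``second moment'' quantity $S_k:=\sum_{\iii\in\Sigma_{A,k}}\mu([\iii])^2$ is correct, but the estimate you then hope to prove for $S_k$ is false, and this is a genuine gap in the argument, not a technicality. In fact $S_k$ grows \emph{exponentially} faster than $e^{-h_\mu k}$ whenever the potential is not cohomologous to a constant. For the Bernoulli case this is already visible by hand: $S_k=\bigl(\sum_{i=1}^K p_i^2\bigr)^k$, and by Jensen's inequality applied to the strictly convex function $e^x$,
\[
\sum_{i=1}^K p_i^2 \;=\; \mathbb{E}\bigl[e^{\log p_I}\bigr] \;>\; e^{\mathbb{E}[\log p_I]} \;=\; e^{-h_\mu}
\]
with equality iff $(p_i)$ is uniform. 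Hence $S_k/e^{-h_\mu k}$ diverges geometrically, so a bound of the shape $S_k\ll e^{-h_\mu k + (1+\varepsilon')\sqrt{2\rho_\mu k\log\log k}}$ cannot hold. In thermodynamic language, $S_k\asymp e^{k(P(2f)-2P(f))}$, and the variational principle gives $P(2f)-2P(f)>-h_\mu$ strictly in the non-cohomologous case. The heuristic in your proposal --- that the dominant words contributing to $S_k$ are those whose Birkhoff sums deviate by $O(\sqrt{k\log\log k})$ --- is precisely where the argument breaks: when you weight the measure of a deviation slab by $e^{S_k f}/e^{kP(f)}$ and optimise, the maximum sits at a deviation of order $k$, not $\sqrt{k\log\log k}$, so the sum is a genuine \emph{large} deviation quantity, and the exponential penalty does not cancel the exponential gain.

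The way out --- and this is what the paper does --- is not to bound the unrestricted sum at all. Instead, one first uses the Law of the Iterated Logarithm (Theorem~\ref{thm:LIL}) to produce a full-measure set $\bigcup_N E_N$ where $\mu([\iii|_1^n])\le e^{-h_\mu n+(1+\varepsilon)\sqrt{2\rho_\mu n\log\log n}}$ for all $n\ge N$, and then shows $\mu(E_N\cap R_\psi)=0$ for each $N$. Intersecting the Borel--Cantelli family with $E_N$ kills exactly the atypical prefixes $\iii$ that blow up $S_k$: one may restrict the outer sum to words $\iii\in F_{A,\psi(k)}$ with $\mu([\iii])\le e^{-h_\mu\psi(k)+(1+\varepsilon)\sqrt{2\rho_\mu\psi(k)\log\log\psi(k)}}$, replace only one factor $\mu([\iii])$ by this upper bound (rather than squaring), and then use $\sum_{\iii,\jjj}\mu([\iii\jjj])\le 1$ to conclude. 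That single substitution is what produces the exponent in the hypothesis and avoids the second-moment blowup. Your use of the quasi-Bernoulli inequality and the collapsing of the $\jjj$-sum are fine; the missing idea is the preliminary restriction to the LIL-typical set before invoking Borel--Cantelli.
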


Given $\psi:\mathbb{N}\to\mathbb{N}$ and $n\in\N$, let $\psi^{-1}(n)=\{m\in\N:\psi(m)=n\}$. For any set $A\subseteq\N$ we denote the cardinality of $A$ by $\#A$. The following theorem is the main result of this paper. It gives sufficient conditions for $\mu(R_{\psi})=1$.

\begin{theorem}\label{thm:meas1}
Let $\mu$ be the Gibbs measure for a H\"older continuous potential that is not cohomologous to a constant. Let $\psi\colon\N\mapsto\N$ be such that $\psi(n)\leq n$ for all $n$ sufficiently large and $\lim_{n\to\infty}\psi(n)=\infty$. Suppose that there exists an increasing sequence $(n_k)_{k=1}^{\infty}$ and a function $g\colon\N\mapsto[1,\infty)$ satisfying $\lim\limits_{n\to\infty}\frac{g(n)}{g(n+1)}=1,$ such that
	\begin{equation}\label{eq:cond1}
	\mu\left(\left\{\iii\in\Sigma_{A}:\limsup_{k\to\infty}\frac{\log\mu([\iii|_1^{n_k}])+h_\mu n_k}{\sqrt{2\rho_\mu n_k}g(n_k)}>1\right\}\right)=1
\end{equation}
	and
	\begin{equation}\label{eq:cond2}
\lim_{k\to\infty}\#\psi^{-1}(n_k)e^{-h_\mu n_k+\sqrt{2\rho_\mu n_k} g(n_k)}=\infty.
\end{equation}
Then $$\mu\left(\left\{\iii\in \Sigma_{A}:d(\sigma^{p}\iii,\iii)\leq K^{-n_{k}}\textrm{ for some }p\in \psi^{-1}(n_k) \textrm{ for infinitely many }k\in\mathbb{N}\right\}\right)=1.$$
Moreover, $\mu(R_\psi)=1$. \end{theorem}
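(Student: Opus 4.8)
The plan is to prove the first displayed conclusion, $\mu(\limsup_{k\to\infty}A_k)=1$ with $A_k:=\{\iii\in\Sigma_A:d(\sigma^p\iii,\iii)\le K^{-n_k}\text{ for some }p\in\psi^{-1}(n_k)\}$, and to deduce $\mu(R_\psi)=1$ from it. The deduction is immediate: if $\iii\in A_k$ then $d(\sigma^p\iii,\iii)\le K^{-n_k}=K^{-\psi(p)}$ for some $p\in\psi^{-1}(n_k)$; the sets $\psi^{-1}(n_k)$ are pairwise disjoint, and $\psi(n)\le n$ for large $n$ together with $n_k\to\infty$ forces each such $p$ to be large; hence $\iii$ lying in infinitely many $A_k$ yields infinitely many distinct times $p$ with $d(\sigma^p\iii,\iii)\le K^{-\psi(p)}$, i.e.\ $\iii\in R_\psi$.

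For the core estimate, fix a large $k$, put $S_k:=\psi^{-1}(n_k)\cap(n_k+M,\infty)$ (this discards at most $n_k+M$ return times, which is negligible against $\#\psi^{-1}(n_k)$ by \eqref{eq:cond2}), and call $u\in\Sigma_{A,n_k}$ \emph{heavy} if $\mu([u])\ge e^{-h_\mu n_k+\sqrt{2\rho_\mu n_k}\,g(n_k)}$; let $\Gamma_k$ be the set of heavy words. For heavy $u$ work under $\mu_u(E):=\mu(E\cap[u])/\mu([u])$ and set $N_k:=\#\{p\in S_k:i_{p+1}\cdots i_{p+n_k}=u\}$, so that $\{N_k\ge1\}\subseteq A_k$. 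The Gibbs property \eqref{eq:Gibbs} and topological mixing give $\mu([u]\cap\sigma^{-p}[u])\asymp\mu([u])^2$ for $p\ge n_k+M$, so $\int N_k\,d\mu_u\asymp\#S_k\,\mu([u])$, which by heaviness and \eqref{eq:cond2} tends to $\infty$ as $k\to\infty$, uniformly over $u\in\Gamma_k$. In the second moment, pairs $p<p'$ in $S_k$ with $p'-p\ge n_k$ decouple and contribute $O((\int N_k\,d\mu_u)^2)$, and the diagonal contributes $\int N_k\,d\mu_u$. The delicate contribution is from pairs with $t:=p'-p<n_k$: such a joint event is empty unless $u$ has period $t$, in which case $\mu([u]\cap\sigma^{-p}[u]\cap\sigma^{-p'}[u])\asymp\mu([u])^2\mu([v_t])$ where $v_t$ is the length-$t$ suffix of $u$; since for each $t$ there are at most $\#S_k$ pairs, this term is $\ll\#S_k\,\mu([u])\sum_t\mu([v_t])$, the sum being over periods $t<n_k$ of $u$.

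The crux, and the step I expect to be the main obstacle, is to bound $\sum_t\mu([v_t])$ by an absolute constant. For periods $t<n_k/2$ the Fine--Wilf theorem shows these form an arithmetic progression $t_{\min},2t_{\min},\dots$; when $t=jt_{\min}$ the suffix $v_t$ is $b^j$ for one fixed word $b$ of length $t_{\min}$ whose periodic repetition lies in $\Sigma_A$, and \eqref{eq:Gibbs} gives $\mu([b^j])\asymp e^{jt_{\min}(\int f\,d\nu_b-P(f))}$, where $\nu_b$ is the orbit measure of $b^\infty$. Because $f$ is Hölder and not cohomologous to a constant, the equilibrium state is the unique measure attaining $\sup_\nu(h_\nu+\int f\,d\nu)$ and has positive entropy, whence $\int f\,d\nu_b\le\sup_\nu\int f\,d\nu<P(f)$ strictly; this geometric decay sums (using $t_{\min}\ge1$) to an absolute constant. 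Periods $t$ with $n_k/2\le t<n_k$ contribute at most $n_k\max_{|v|\ge n_k/2}\mu([v])\le Cn_k e^{-\tau n_k/2}\to0$ for some $\tau>0$, again because maximal cylinder measures decay exponentially. Hence $\int N_k^2\,d\mu_u\le C'(\int N_k\,d\mu_u)^2$ for all large $k$, uniformly over $u\in\Gamma_k$, and the Paley--Zygmund inequality produces a constant $c_0>0$ with $\mu(A_k\mid[u])\ge\mu_u(N_k\ge1)\ge c_0$ for every heavy $u$ and every $k\ge k_0$.

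It remains to upgrade this uniform conditional bound to $\mu(\limsup_kA_k)=1$. By \eqref{eq:cond1} (a $\limsup$ exceeding $1$ occurs infinitely often) the set $D$ of $\iii$ with $\iii|_1^{n_k}\in\Gamma_k$ for infinitely many $k$ has $\mu(D)=1$. For $\iii\in D$, select greedily an increasing sequence $\kappa_1<\kappa_2<\cdots$ of such heavy indices with nested, non-overlapping dependency windows, i.e.\ with $n_{\kappa_{j+1}}$ exceeding the largest coordinate on which $A_{\kappa_j}$ depends. Then $(A_{\kappa_j})_j$ is adapted to the resulting stopped filtration, and the previous paragraph gives $\mu(A_{\kappa_j}\mid\mathcal F_{n_{\kappa_j}})\ge c_0$ for all large $j$ (using $\mathcal F_{n_{\kappa_j}}$ refines the information up to time $\kappa_{j-1}$, and $\kappa_j\to\infty$); the conditional Borel--Cantelli lemma (cf.\ Lemma~\ref{lem:BC}) then forces $A_{\kappa_j}$ to occur for infinitely many $j$ almost surely on $D$, so $\iii\in\limsup_kA_k$ for $\mu$-almost every $\iii$. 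Essentially all the difficulty is concentrated in the overlap bound of the third paragraph, which fuses the combinatorics of periodic words (Fine--Wilf) with the thermodynamic facts $\sup_\nu\int f\,d\nu<P(f)$ and the exponential decay of maximal cylinder measures.
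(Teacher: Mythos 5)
Your proposal is correct in outline, but it takes a genuinely different route from the paper's. Where you keep essentially all of $\psi^{-1}(n_k)$ and then must control close pairs $p'-p<n_k$ by exploiting the periodic structure of $u$ (Fine--Wilf, plus the thermodynamic facts that $\sup_\nu\int f\,d\nu<P(f)$ because the equilibrium state has positive entropy, and that maximal cylinder measures decay exponentially), the paper dodges this entirely: in Lemma~\ref{lem:Ci} it restricts to an $(n_\ell+r_\ell)$-separated subset $S\subset\psi^{-1}(n_\ell)$ with $r_\ell=\lceil -h_\mu/\log\gamma\rceil n_\ell$, so that the correlation error $D\gamma^{r_\ell}$ from Theorem~\ref{thm:decay} is already dominated by $\mu([u])\gtrsim e^{-h_\mu n_\ell}$, and no analysis of overlapping returns is needed. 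The paper also replaces your Paley--Zygmund-plus-conditional-Borel--Cantelli upgrade by a hands-on Cantor construction: a nested family $D_k(\iii)$ of unions of disjoint cylinders (built from the auxiliary sets $G_{h,m,\delta}^{(\ell)}$, $B_{\iii,\delta}^{(\ell)}$, $C_{\iii,\delta}^{(\ell)}$), a measure-loss estimate at each level (Lemma~\ref{lem:boundD}), and then \cite[Lemma 6]{BDV} to promote the uniform bound $\mu([\iii]\cap S_{\psi,(n_k)})\geq c\mu([\iii])$ to full measure. Your route is more in line with the standard second-moment method and is arguably more conceptual, but it requires several nontrivial auxiliary facts which you correctly flag as the main obstacle — in particular the uniform bound $\sum_t\mu([v_t])\leq C$ over the periods of $u$, which needs both Fine--Wilf and the strict pressure gap — and it pushes some delicacy into the stopped-filtration bookkeeping (ensuring $\kappa_j\geq k_0$ deterministically for $j$ large and that the windows are genuinely non-overlapping in the filtration). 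The paper's choice of a widely separated subset buys it freedom from all of this at the cost of a somewhat more intricate explicit construction. Both arguments rest on the same two pillars: \eqref{eq:cond1} to make the base cylinder $[u]$ heavy, and \eqref{eq:cond2} to force the expected return count to infinity.
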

We emphasise that Theorem \ref{thm:meas1} also holds without the assumption $\lim_{n\to\infty}\psi(n)=\infty.$ It can be shown using the arguments given in this paper that whenever $\liminf_{n\to\infty}\psi(n)<\infty$ we have $\mu(R_{\psi})=1$. Theorem \ref{thm:meas1} implies the following corollary.
\begin{corollary}
\label{cor:Applications corollary}
Let $\mu$ be the Gibbs measure for a H\"older continuous potential that is not cohomologous to a constant. Let $\psi\colon\N\mapsto\N$ be such that $\psi(n)\leq n$ for all $n$ sufficiently large and $\lim_{n\to\infty}\psi(n)=\infty$. Suppose that there exists $\varepsilon>0$ for which
\begin{equation}
\label{cardinality bound}
\left\lceil e^{h_\mu n-(1-\varepsilon)\sqrt{2\rho_\mu n\log\log{n}}}\right\rceil\ll \#\psi^{-1}(n)
\end{equation}
for all $n$ sufficiently large. Then $\mu(R_{\psi})=1$.
\end{corollary}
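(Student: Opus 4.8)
The plan is to derive the corollary from Theorem \ref{thm:meas1} by exhibiting a suitable sequence $(n_k)$ and function $g$. The natural choice is to take $n_k = k$ (i.e. every natural number) and $g(n) = \sqrt{\log\log n}$ for $n$ large, extended arbitrarily (say by a constant) to the finitely many small $n$ where $\log\log n < 1$, so that $g:\N\to[1,\infty)$. With this choice, $\frac{g(n)}{g(n+1)} = \sqrt{\frac{\log\log n}{\log\log(n+1)}} \to 1$, so the hypothesis on $g$ in Theorem \ref{thm:meas1} holds. It then remains to verify the two conditions \eqref{eq:cond1} and \eqref{eq:cond2} of Theorem \ref{thm:meas1}.

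For condition \eqref{eq:cond2}: substituting $g(n_k) = \sqrt{\log\log n_k}$, the quantity becomes $\#\psi^{-1}(n)\, e^{-h_\mu n + \sqrt{2\rho_\mu n \log\log n}}$. By the hypothesis \eqref{cardinality bound}, this is $\gg e^{h_\mu n - (1-\varepsilon)\sqrt{2\rho_\mu n\log\log n}}\, e^{-h_\mu n + \sqrt{2\rho_\mu n\log\log n}} = e^{\varepsilon\sqrt{2\rho_\mu n\log\log n}}$, which tends to infinity as $n\to\infty$ since $\rho_\mu > 0$ (because $f$ is not cohomologous to a constant). Hence \eqref{eq:cond2} holds.

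For condition \eqref{eq:cond1}: with $g(n) = \sqrt{\log\log n}$, the set in \eqref{eq:cond1} becomes
\[
\left\{\iii\in\Sigma_A : \limsup_{n\to\infty}\frac{\log\mu([\iii|_1^n]) + h_\mu n}{\sqrt{2\rho_\mu n\log\log n}} > 1\right\}.
\]
By the Gibbs property \eqref{eq:Gibbs}, $\log\mu([\iii|_1^n]) = -nP(f) + \sum_{k=0}^{n-1}f(\sigma^k\iii) + O(1)$, and by \eqref{variational principle} we have $-P(f) = -h_\mu - \int f\,d\mu$, so $\log\mu([\iii|_1^n]) + h_\mu n = \sum_{k=0}^{n-1}\bigl(f(\sigma^k\iii) - \int f\,d\mu\bigr) + O(1)$. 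Thus the numerator is (up to a bounded error) the Birkhoff sum of the mean-zero Hölder observable $f - \int f\,d\mu$, whose asymptotic variance is $\rho_\mu > 0$. The claim that the $\limsup$ of this Birkhoff sum divided by $\sqrt{2\rho_\mu n\log\log n}$ equals $1$ almost surely is precisely the (upper half of the) \emph{law of the iterated logarithm} for Birkhoff sums of Hölder potentials over topologically mixing shifts of finite type with respect to the Gibbs measure $\mu$; this is a classical result (see e.g. the works on the LIL in the thermodynamic formalism / via Gordin's martingale approximation or via the spectral method of Nagaev–Guivarc'h, or Philipp–Stout). Since the LIL gives $\limsup = 1$ almost surely, and in particular $\limsup \geq 1 > 1$ fails only at the equality, I should state \eqref{eq:cond1} with a strict inequality — but note Theorem \ref{thm:meas1} requires strict ``$> 1$''. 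To handle this, observe that it suffices to apply the LIL with a slightly smaller variance proxy: for any $\delta>0$, almost surely $\limsup_n \frac{\sum_{k=0}^{n-1}(f(\sigma^k\iii)-\int f\,d\mu)}{\sqrt{2\rho_\mu n\log\log n}} = 1 > 1-\delta$, and one runs the argument with $g(n) = (1-\delta)\sqrt{\log\log n}$ instead; correspondingly in \eqref{eq:cond2} the exponent improves to $e^{(\varepsilon-\delta')\sqrt{\cdots}}$ for small $\delta'$, still tending to infinity provided $\delta$ is chosen small relative to $\varepsilon$. With both conditions verified, Theorem \ref{thm:meas1} yields $\mu(R_\psi) = 1$.

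The main obstacle is the verification of \eqref{eq:cond1}, i.e. invoking the law of the iterated logarithm in the correct form: one needs the sharp constant $\rho_\mu$ in the normalisation (not merely an LIL with some constant), and one must be slightly careful that the $O(1)$ Gibbs error and the $\log\log n$ versus $\log\log(n\text{-ish})$ discrepancies do not affect the $\limsup$ — they do not, since an additive bounded error divided by $\sqrt{n\log\log n}$ vanishes. The reconciliation between the strict inequality demanded by Theorem \ref{thm:meas1} and the equality ``$=1$'' produced by the LIL is the one genuinely delicate point, and the fix above (shrinking $g$ by a factor $1-\delta$, absorbing the loss into the slack $\varepsilon$ of \eqref{cardinality bound}) resolves it cleanly.
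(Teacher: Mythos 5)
Your proof is correct and takes essentially the same route as the paper: both invoke Theorem~\ref{thm:LIL} to verify \eqref{eq:cond1} and use \eqref{cardinality bound} directly to verify \eqref{eq:cond2}. The paper simply starts with $g(n) = (1-\varepsilon/2)\sqrt{\log\log n}$, which is exactly your fix with $\delta = \varepsilon/2$ and sidesteps the strict-inequality issue you correctly identified with $g(n)=\sqrt{\log\log n}$; also, since Theorem~\ref{thm:LIL} is stated directly in terms of $\log\mu([\iii|_1^n]) + h_\mu n$, your reduction to Birkhoff sums via the Gibbs property, while valid, is not needed.
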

\begin{proof}
We take $(n_k)_{k=1}^{\infty}=(k)_{k=1}^{\infty}$ and $g(n)=(1-\varepsilon/2)\sqrt{\log \log n}.$ Then, by \eqref{cardinality bound}, we have that
\[\# \psi^{-1}(n_k) e^{-h_{\mu}n_k + \sqrt{2\rho_{\mu}n_k}g(n_k)} \gg \exp\left(\frac{\varepsilon}{2}\sqrt{2\rho_{\mu}k\log\log{k}}\right)\]
for all $k$ sufficiently large and so \eqref{eq:cond2} follows readily. The fact assumption \eqref{eq:cond1} holds is a consequence of the Law of the Iterated Logarithm for Gibbs measures, see Theorem \ref{thm:LIL} below. By Theorem \ref{thm:meas1} we now have $\mu(R_{\psi})=1$.
\end{proof}
One case our analysis does not cover is when $\psi$ only satisfies the weaker assumption that $$\left\lceil e^{h_\mu n-\sqrt{2\rho_\mu n\log\log{n}}}\right\rceil\ll \#\psi^{-1}(n)$$ for all $n$ sufficiently large. It seems to be a challenging problem to determine $\mu(R_\psi)$ under this assumption.

Combining Theorem \ref{thm:convergence} and Corollary \ref{cor:Applications corollary} we obtain the following result. It identifies a critical threshold for the measure of $R_{\psi}$. This result implies Theorem \ref{Example theorem} stated in the introduction.

\begin{theorem}
	\label{loglogtheorem}
Let $\mu$ be the Gibbs measure for a H\"older continuous potential that is not cohomologous to a constant. For $\varepsilon>0$ let $\psi_{\varepsilon}^{+}:\mathbb{N}\to\mathbb{N}$ and $\psi_{\varepsilon}^{-}:\mathbb{N}\to\mathbb{N}$ be given by: $$\psi_{\varepsilon}^{+}(n)=\left\lfloor \frac{\log n}{h_\mu}+\frac{(1+\varepsilon)}{h_{\mu}^{3/2}}\sqrt{2\rho_{\mu}\log n\log \log \log n}\right \rfloor$$ and $$\psi_{\varepsilon}^{-}(n)=\left\lfloor \frac{\log n}{h_\mu}+\frac{(1-\varepsilon)}{h_{\mu}^{3/2}}\sqrt{2\rho_{\mu}\log n\log \log \log n}\right \rfloor.$$ Then for any $\varepsilon>0$ we have $\mu(R_{\psi_{\varepsilon}^{+}})=0$ and $\mu(R_{\psi_{\varepsilon}^{-}})=1.$
\end{theorem}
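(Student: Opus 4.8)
The plan is to establish the two assertions separately: $\mu(R_{\psi_\varepsilon^+})=0$ will follow from Theorem~\ref{thm:convergence}, and $\mu(R_{\psi_\varepsilon^-})=1$ from Corollary~\ref{cor:Applications corollary}. Recall that $\rho_\mu>0$ because the potential is not cohomologous to a constant, and $h_\mu>0$. The mechanism that makes everything go through cleanly is that the strict inequalities in the exponents leave room to fix an auxiliary $\varepsilon'\in(0,\varepsilon)$, say $\varepsilon'=\varepsilon/2$, and the resulting \emph{fixed} gap will dominate every lower-order error produced by the floor functions and by the asymptotic inversions below.

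For the first assertion I would verify the hypothesis of Theorem~\ref{thm:convergence} with $\varepsilon'$ in place of $\varepsilon$. Since $\psi_\varepsilon^+(n)\to\infty$ and $\psi_\varepsilon^+(n)=(1+o(1))\frac{\log n}{h_\mu}$, we have $\log\log\psi_\varepsilon^+(n)=(1+o(1))\log^{(3)}n$. Combining $h_\mu\psi_\varepsilon^+(n)\ge\log n+\frac{1+\varepsilon}{\sqrt{h_\mu}}\sqrt{2\rho_\mu\log n\,\log^{(3)}n}-h_\mu$ with $\sqrt{2\rho_\mu\psi_\varepsilon^+(n)\log\log\psi_\varepsilon^+(n)}\le\frac{1+o(1)}{\sqrt{h_\mu}}\sqrt{2\rho_\mu\log n\,\log^{(3)}n}$, the $n$-th term of the series in Theorem~\ref{thm:convergence} is, for all large $n$, at most $e^{h_\mu}n^{-1}\exp\!\left(-c\sqrt{\log n\,\log^{(3)}n}\right)$ for some constant $c>0$ (here one uses $(1+\varepsilon')(1+o(1))-(1+\varepsilon)\to-\varepsilon/2<0$). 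Since $\log^{(3)}n\to\infty$, this is eventually $\le n^{-1}(\log n)^{-2}$, so the series converges and Theorem~\ref{thm:convergence} gives $\mu(R_{\psi_\varepsilon^+})=0$.

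For the second assertion I would check the hypotheses of Corollary~\ref{cor:Applications corollary}. The conditions $\psi_\varepsilon^-(n)\le n$ for large $n$ and $\psi_\varepsilon^-(n)\to\infty$ are immediate, so the real task is the lower bound \eqref{cardinality bound} for $\#(\psi_\varepsilon^-)^{-1}(m)$. Set $\phi(x)=\frac{x}{h_\mu}+\frac{1-\varepsilon}{h_\mu^{3/2}}\sqrt{2\rho_\mu x\log\log x}$, so that $\psi_\varepsilon^-(n)=\lfloor\phi(\log n)\rfloor$; then $\phi$ is eventually increasing with $\phi'(x)=\frac{1}{h_\mu}+o(1)$, hence for each large $m$ the set $\{x:\phi(x)\in[m,m+1)\}$ is an interval $[\phi^{-1}(m),\phi^{-1}(m+1))$ with $\phi^{-1}(m+1)-\phi^{-1}(m)\to h_\mu$. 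Consequently $(\psi_\varepsilon^-)^{-1}(m)$ consists of the integers in $[e^{\phi^{-1}(m)},e^{\phi^{-1}(m+1)})$, an interval of length $e^{\phi^{-1}(m)}(e^{h_\mu+o(1)}-1)\gg e^{\phi^{-1}(m)}$, so $\#(\psi_\varepsilon^-)^{-1}(m)\gg e^{\phi^{-1}(m)}$. Inverting $\phi$ asymptotically (using $\phi^{-1}(m)\sim h_\mu m$ and $\log\log(h_\mu m)\sim\log\log m$) gives $\phi^{-1}(m)=h_\mu m-(1-\varepsilon)\sqrt{2\rho_\mu m\log\log m}\,(1+o(1))$, whence, with $\varepsilon'=\varepsilon/2<\varepsilon$,
$$\#(\psi_\varepsilon^-)^{-1}(m)\gg e^{\phi^{-1}(m)}\ge e^{h_\mu m-(1-\varepsilon')\sqrt{2\rho_\mu m\log\log m}}$$
for all large $m$. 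Finally, since $\phi(\log(n+1))-\phi(\log n)\to0$, the function $\psi_\varepsilon^-$ increases in steps of $0$ or $1$ and hence attains every sufficiently large integer value, so the displayed bound holds for every large $m$; Corollary~\ref{cor:Applications corollary} applied with $\varepsilon'$ then yields $\mu(R_{\psi_\varepsilon^-})=1$.

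The main obstacle is the bookkeeping in these two asymptotic computations: one must confirm that the floor in the definition of $\psi_\varepsilon^\pm$, the replacement of $\log\log(h_\mu m)$ by $\log\log m$ (and likewise of $\log\log\psi_\varepsilon^+(n)$ by $\log^{(3)}n$), and the implicit $(1+o(1))$ multiplying $\sqrt{m\log\log m}$ in $\phi^{-1}(m)$ each perturb the relevant exponent only by $o(\sqrt{m\log\log m})$ (respectively $o(\sqrt{\log n\,\log^{(3)}n})$), so that they are all absorbed by the fixed gap coming from $\varepsilon'<\varepsilon$. Once this is carried out carefully, both invocations of Theorem~\ref{thm:convergence} and Corollary~\ref{cor:Applications corollary} are routine.
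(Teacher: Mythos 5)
Your proposal is correct and follows essentially the same route as the paper: Part 1 invokes Theorem~\ref{thm:convergence} with a smaller $\varepsilon'$ after controlling $\psi_\varepsilon^+(n)$ and $\log\log\psi_\varepsilon^+(n)$ asymptotically, and Part 2 estimates $\#(\psi_\varepsilon^-)^{-1}(m)$ by a mean-value/inversion argument and feeds the result into Corollary~\ref{cor:Applications corollary}. The only cosmetic difference is that you work with $\phi(y)=\tilde\psi_\varepsilon^-(e^y)$ on the logarithmic scale and invert $\phi$ asymptotically, while the paper works directly with $\tilde\psi_\varepsilon^-$ and verifies the lower bound on $m(n)$ by substituting a candidate and checking $\tilde\psi_\varepsilon^-(\cdot)<n$; the content is the same.
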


\begin{proof}
This result is a consequence of Theorem \ref{thm:convergence} and Corollary \ref{cor:Applications corollary}. We split our proof into two parts. \\

\noindent \textbf{Proof that $\mu(R_{\psi_{\varepsilon}^{+}})=0.$}
Fix $\varepsilon>0$. We begin by noting that for all $n$ sufficiently large we have
\begin{equation}
\label{bound1}
\psi_{\varepsilon}^{+}(n)=\left\lfloor \frac{\log n}{h_{\mu}}+\frac{(1+\varepsilon)}{h_{\mu}^{3/2}}\sqrt{2\rho_{\mu}\log n\log \log \log n}\right \rfloor\leq \left(1+\frac{\varepsilon}{2}\right)^{2/3}\frac{\log n}{h_{\mu}}
\end{equation}
and
\begin{equation}
\label{bound2}
\log \log\left(\left(1+\frac{\varepsilon}{2}\right)^{2/3}\frac{\log n}{h_{\mu}}\right)\leq \left(1+\frac{\varepsilon}{2}\right)^{2/3}\log \log \log n.
\end{equation}
Writing $e(x)=e^x$, it follows from the definition of $\psi_{\varepsilon}^{+}$ that
\begin{align*}
&\sum_{n=1}^{\infty} e^{-h_{\mu}\psi_{\varepsilon}^{+}(n)+(1+\varepsilon/2)^{1/3}\sqrt{2\rho_{\mu} \psi_{\varepsilon}^{+}(n)\log \log \psi_{\varepsilon}^{+}(n)}}\\
\ll &\sum_{n=1}^{\infty}\frac{1}{n}e\left(-\frac{(1+\varepsilon)}{h_{\mu}^{1/2}}\sqrt{2\rho_{\mu} \log n\log^{(3)} n}+\left(1+\frac{\varepsilon}{2}\right)^{1/3}\sqrt{2\rho_{\mu} \psi_{\varepsilon}^{+}(n)\log^{(2)} \psi_{\varepsilon}^{+}(n)}\right).
\end{align*}
Next, \eqref{bound1} gives us that
\begin{align*}
&\sum_{n=1}^{\infty} e^{-h_{\mu}\psi_{\varepsilon}^{+}(n)+(1+\varepsilon/2)^{1/3}\sqrt{2\rho_{\mu} \psi_{\varepsilon}^{+}(n)\log \log \psi_{\varepsilon}^{+}(n)}}\\
\ll &\sum_{n=1}^{\infty}\frac{1}{n}e\left(-\frac{(1+\varepsilon)}{h_{\mu}^{1/2}}\sqrt{2\rho_{\mu} \log n\log^{(3)} n}+\left(1+\frac{\varepsilon}{2}\right)^{1/3}\sqrt{2\rho_{\mu} \left(1+\frac{\varepsilon}{2}\right)^{2/3}\frac{\log n}{h_{\mu}}\log^{(2)} \left(\left(1+\frac{\varepsilon}{2}\right)^{2/3}\frac{\log n}{h_{\mu}}\right)}\right).
\end{align*}
Finally, it follows from \eqref{bound2} that
\begin{align*}
&\sum_{n=1}^{\infty} e^{-h_{\mu}\psi_{\varepsilon}^{+}(n)+(1+\varepsilon/2)^{1/3}\sqrt{2\rho_{\mu} \psi_{\varepsilon}^{+}(n)\log \log \psi_{\varepsilon}^{+}(n)}}\\
\stackrel{\eqref{bound2}}{\ll} &\sum_{n=1}^{\infty}\frac{1}{n}e\left(-\frac{(1+\varepsilon)}{h_{\mu}^{1/2}}\sqrt{2\rho_{\mu} \log n\log^{(3)} n}+\frac{(1+\varepsilon/2)}{h_{\mu}^{1/2}}\sqrt{2\rho_{\mu} \log n\log^{(3)} n}\right)\\
=&\sum_{n=1}^{\infty}\frac{1}{n}e\left(-\frac{\varepsilon/2}{h_{\mu}^{1/2}}\sqrt{2\rho_{\mu} \log n\log^{(3)} n}\right)\\
<&\infty.
\end{align*}
Therefore by Theorem \ref{thm:convergence} we have that $\mu(R_{\psi_{\varepsilon}^{+}})=0$.\\

\noindent \textbf{Proof that $\mu(R_{\psi_{\varepsilon}^{-}})=1.$} Fix $0 < \varepsilon < 1$. Define $\tilde{\psi}^{-}_{\varepsilon}:[0,\infty)\to [0,\infty)$ by $$\tilde{\psi}^{-}_{\varepsilon}(x)=\frac{\log x}{h_{\mu}}+\frac{(1-\varepsilon)}{h_{\mu}^{3/2}}\sqrt{2\rho_{\mu}\log x\log^{(3)} x}
.$$ Note that $\lfloor \tilde{\psi}^{-}_{\varepsilon}(n)\rfloor = \psi^{-}_{\varepsilon}(n)$ for all $n\in\mathbb{N}$. It is a straightforward albeit tedious calculation to verify that there exist constants $C_1,C_2>0$ such that
\begin{equation}
\label{derivative bound}
\frac{C_1}{x}\leq (\tilde{\psi}^{-}_{\varepsilon})'(x)\leq \frac{C_2}{x}
\end{equation} for all $x$ sufficiently large. It can also be shown that $(\tilde{\psi}^{-}_{\varepsilon})':[0,\infty)\to[0,\infty)$ is decreasing. We define $m(n)\in[0,\infty)$ implicitly as the unique solution to the equation $\tilde{\psi}^{-}_{\varepsilon}(m(n))=n.$ Using the Mean Value Theorem together with \eqref{derivative bound} and the fact that $(\tilde{\psi}^{-}_{\varepsilon})'$ is decreasing, we may deduce the following
\begin{align*}
\#(\psi^{-}_{\varepsilon})^{-1}(n)=\# \{m\in\mathbb{N}:\tilde{\psi}^{-}_{\varepsilon}(m)\in[n,n+1)\}
&\gg |m(n+1)-m(n)|\\
&\geq ((\tilde{\psi}^{-}_{\varepsilon})'(m(n)))^{-1}\\
&\gg \frac{m(n)}{C_2}.
\end{align*}  It follows from the above that to obtain the cardinality bounds required by Corollary \ref{cor:Applications corollary}, it is sufficient to obtain a good lower bound for $m(n)$. By the definition of $m(n)$ and using the fact that $\tilde{\psi}^{-}_{\varepsilon}$ is increasing, it follows that to obtain the desired bound it is sufficient to show that $$\tilde{\psi}^{-}_{\varepsilon}(e^{h_\mu n-(1-\frac{\varepsilon}{2})\sqrt{2\rho_\mu n\log^{(2)}{n}}})< n$$ for all $n$ sufficiently large. This we do below.

The following holds for all $n$ sufficiently large:
\begin{align*}
&\tilde{\psi}^{-}_{\varepsilon}(e^{h_\mu n-(1-\frac{\varepsilon}{2})\sqrt{2\rho_\mu n\log^{(2)}{n}}})\\
=&n-\frac{(1-\frac{\varepsilon}{2})}{h_{\mu}}\sqrt{2\rho_{\mu}n\log^{(2)} n}+\frac{(1-\varepsilon)}{h_{\mu}^{3/2}}\sqrt{2\rho_{\mu}\log \left(e^{h_\mu n-(1-\frac{\varepsilon}{2})\sqrt{2\rho_\mu n\log^{(2)}{n}}}\right)\log^{(3)} \left(e^{h_\mu n-(1-\frac{\varepsilon}{2})\sqrt{2\rho_\mu n\log^{(2)}{n}}}\right)}\\
=&n-\frac{(1-\frac{\varepsilon}{2})}{h_{\mu}}\sqrt{2\rho_{\mu}n\log^{(2)} n}+\frac{(1-\varepsilon)}{h_{\mu}^{3/2}}\sqrt{2\rho_{\mu}\left(h_\mu n-\left(1-\frac{\varepsilon}{2}\right)\sqrt{2\rho_\mu n\log^{(2)}{n}}\right)\log^{(3)} \left(e^{h_\mu n-(1-\frac{\varepsilon}{2})\sqrt{2\rho_\mu n\log^{(2)}{n}}}\right)}\\
\leq & n-\frac{(1-\frac{\varepsilon}{2})}{h_{\mu}}\sqrt{2\rho_{\mu}n\log^{(2)} n}+\frac{(1-\varepsilon)}{h_{\mu}^{3/2}}\sqrt{2\rho_{\mu}h_{\mu}n\log^{(3)} \left(e^{h_\mu n-(1-\frac{\varepsilon}{2})\sqrt{2\rho_\mu n\log^{(2)}{n}}}\right)}\\
= & n-\frac{(1-\frac{\varepsilon}{2})}{h_{\mu}}\sqrt{2\rho_{\mu}n\log^{(2)} n}+\frac{(1-\varepsilon)}{h_{\mu}}\sqrt{2\rho_{\mu}n\log^{(3)} \left(e^{h_\mu n-(1-\frac{\varepsilon}{2})\sqrt{2\rho_\mu n\log^{(2)}{n}}}\right)}\\
\leq & n-\frac{(1-\frac{\varepsilon}{2})}{h_{\mu}}\sqrt{2\rho_{\mu}n\log^{(2)} n}+\frac{(1-\varepsilon)}{h_{\mu}}\sqrt{2\rho_{\mu}n\log^{(3)} \left(e^{h_\mu n}\right)}\\
=& n-\frac{(1-\frac{\varepsilon}{2})}{h_{\mu}}\sqrt{2\rho_{\mu}n\log^{(2)} n}+\frac{(1-\varepsilon)}{h_{\mu}}\sqrt{2\rho_{\mu}n\log^{(2)} h_\mu n}\\
\leq & n-\frac{(1-\frac{\varepsilon}{2})}{h_{\mu}}\sqrt{2\rho_{\mu}n\log^{(2)} n}+\frac{(1-\frac{3\varepsilon}{4})}{h_{\mu}}\sqrt{2\rho_{\mu}n\log^{(2)} n}\\
=&n-\frac{\frac{\varepsilon}{4}}{h_{\mu}}\sqrt{2\rho_{\mu}n\log^{(2)} n}\\
<&n.
\end{align*}
It follows from the above that $$m(n)\geq e^{h_\mu n-(1-\varepsilon/2)\sqrt{2\rho_\mu n\log\log{n}}}$$ for all $n$ sufficiently large. This in turn implies that
$$\#(\psi^{-}_{\varepsilon})^{-1}(n)\gg e^{h_\mu n-(1-\varepsilon/2)\sqrt{2\rho_\mu n\log\log{n}}}$$ for all $n$ sufficiently large. Therefore the function $\psi^{-}_{\varepsilon}$ satisfies the bound \eqref{cardinality bound} and by Corollary~\ref{cor:Applications corollary} we may conclude that $\mu(R_{\psi^{-}_{\varepsilon}})=1.$

\end{proof}

The following result gives a divergence condition in the form of \eqref{volume sums} which guarantees that $R_{\psi}$ has full measure.
\begin{corollary}
	Let $\mu$ be the Gibbs measure for a H\"older continuous potential that is not cohomologous to a constant. Let $\psi\colon\N\mapsto\N$ be such that $\psi(n)\leq n$ for all $n\in\N$ and $\lim_{n\to\infty}\psi(n)=\infty$. Suppose that there exists a constant $K>0$ for which
	$$
	\sum_{n=1}^\infty e^{-h_\mu\psi(n)+K\sqrt{2\rho_\mu\psi(n)}}=\infty.
	$$
	Then $\mu(R_\psi)=1$.
\end{corollary}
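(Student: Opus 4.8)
The plan is to deduce this from Theorem~\ref{thm:meas1}, taking the auxiliary function $g$ to be a suitable constant and choosing the sequence $(n_k)_{k=1}^{\infty}$ to grow rapidly along the ``support'' of the divergent series. First I would rewrite the hypothesis by grouping the sum according to the value of $\psi$. Since $\lim_{n\to\infty}\psi(n)=\infty$, every level set $\psi^{-1}(m)$ is finite, and since all the terms are non-negative,
\[
\sum_{m=1}^{\infty}\#\psi^{-1}(m)\,e^{-h_\mu m+K\sqrt{2\rho_\mu m}}=\sum_{n=1}^{\infty}e^{-h_\mu\psi(n)+K\sqrt{2\rho_\mu\psi(n)}}=\infty.
\]
Put $b_m:=\#\psi^{-1}(m)\,e^{-h_\mu m+K\sqrt{2\rho_\mu m}}$. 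Because $\rho_\mu>0$ we have $\sum_m e^{-\sqrt{2\rho_\mu m}}<\infty$, so if $\limsup_{m\to\infty}b_m e^{\sqrt{2\rho_\mu m}}<\infty$ then $\sum_m b_m<\infty$, contradicting the display above. Hence $\limsup_{m\to\infty}\#\psi^{-1}(m)\,e^{-h_\mu m+(K+1)\sqrt{2\rho_\mu m}}=\infty$, so for every $R>0$ the set $\{m\in\N:\#\psi^{-1}(m)\,e^{-h_\mu m+(K+1)\sqrt{2\rho_\mu m}}>R\}$ is infinite.

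Next I would set $g\equiv K+1$; this takes values in $[1,\infty)$ (as $K>0$) and trivially satisfies $g(n)/g(n+1)\to1$. Using the previous paragraph, construct an increasing sequence $(n_k)_{k=1}^{\infty}$ so that $\#\psi^{-1}(n_k)\,e^{-h_\mu n_k+(K+1)\sqrt{2\rho_\mu n_k}}>k$, while also insisting that $n_k$ exceed a threshold depending on $n_{k-1}$ chosen so that $(n_k)$ grows as fast as the verification of \eqref{eq:cond1} below requires; both demands are compatible because, for each $k$, the set $\{m\in\N:\#\psi^{-1}(m)\,e^{-h_\mu m+(K+1)\sqrt{2\rho_\mu m}}>k\}$ is infinite. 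With this choice \eqref{eq:cond2} holds by construction, so it only remains to verify \eqref{eq:cond1}.

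Write $T_n(\iii):=\log\mu([\iii|_1^{n}])+h_\mu n$, which depends only on the first $n$ coordinates of $\iii$. By \eqref{eq:Gibbs} and \eqref{variational principle}, $T_n(\iii)=\sum_{j=0}^{n-1}f(\sigma^j\iii)-n\int f\,d\mu+O(1)$; in particular $|T_n|\le c_0 n$ for a constant $c_0$ depending only on $f$. The quasi-Bernoulli inequality \eqref{eq:quasiBernoulli} gives the increment identity $T_{n'}(\iii)-T_n(\iii)=T_{n'-n}(\sigma^n\iii)+O(1)$ whenever $n<n'$. Fix $C>(K+1)\sqrt{2\rho_\mu}$ and let $E_k:=\{\iii\in\Sigma_{A}:T_{n_k}(\iii)>C\sqrt{n_k}\}$, an event depending only on the first $n_k$ coordinates. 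If $n_{k+1}$ is large enough that $c_0 n_k\le\sqrt{n_{k+1}}$, then $E_{k+1}$ contains the event $\{\iii:T_{n_{k+1}-n_k}(\sigma^{n_k}\iii)>(C+1)\sqrt{n_{k+1}}+O(1)\}$. Using \eqref{eq:quasiBernoulli} together with the topological mixing of $\Sigma_{A}$ (to handle admissibility of concatenations), the conditional probability of $E_{k+1}$ given the first $n_k$ coordinates of $\iii$ is, $\mu$-almost everywhere, at least a fixed positive multiple of $\mu(\{T_{n_{k+1}-n_k}>(C+2)\sqrt{n_{k+1}}\})$; and by the Central Limit Theorem for Gibbs measures this last quantity is bounded below by a positive constant once $n_{k+1}-n_k$ is large. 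Consequently the conditional probabilities of the $E_{k+1}$ given the first $n_k$ coordinates sum to infinity $\mu$-almost everywhere, so L\'evy's extension of the Borel--Cantelli Lemma yields $\mu(\limsup_k E_k)=1$; that is, $\limsup_{k\to\infty}T_{n_k}(\iii)/\sqrt{n_k}\ge C>(K+1)\sqrt{2\rho_\mu}$ for $\mu$-almost every $\iii$. Since $g\equiv K+1$, this is precisely \eqref{eq:cond1}, and Theorem~\ref{thm:meas1} now gives $\mu(R_\psi)=1$.

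The main obstacle is the verification of \eqref{eq:cond1}: one has to upgrade the Central Limit Theorem for Gibbs measures to an almost-sure recurrence statement along the sparse, rapidly growing sequence $(n_k)$, and \eqref{eq:quasiBernoulli} is exactly what supplies the asymptotic independence needed to run the Borel--Cantelli argument. The remaining ingredients — regrouping the series, taking $g$ constant, and deducing \eqref{eq:cond2} — are routine. (If an estimate of the form \eqref{eq:cond1} for rapidly growing $(n_k)$ and constant $g$ is already available among the tools developed to prove Theorem~\ref{thm:meas1} and the Law of the Iterated Logarithm of Theorem~\ref{thm:LIL}, this part of the argument can instead simply be quoted.)
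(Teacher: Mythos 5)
Your proposal is correct in outline, but it takes a genuinely different route from the paper for the key step of verifying \eqref{eq:cond1}. The shared ``easy'' ingredients are the same: regroup the divergent series as $\sum_m \#\psi^{-1}(m)e^{-h_\mu m+K\sqrt{2\rho_\mu m}}$, take $g$ to be a constant $K'>K$, and extract a subsequence $(n_k)$ along which \eqref{eq:cond2} holds (the paper picks $n_k$ with $\#\psi^{-1}(n_k)e^{-h_\mu n_k+K\sqrt{2\rho_\mu n_k}}>n_k^{-2}$, you pick $n_k$ with $\#\psi^{-1}(n_k)e^{-h_\mu n_k+(K+1)\sqrt{2\rho_\mu n_k}}>k$; both work). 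The real content is proving that $\limsup_k \bigl(\log\mu([\iii|_1^{n_k}])+h_\mu n_k\bigr)/\sqrt{2\rho_\mu n_k}$ exceeds the constant $\mu$-almost surely. You attack this directly: choose $(n_k)$ to grow super-exponentially, decouple $T_{n_{k+1}}$ from $T_{n_k}$ via \eqref{eq:Gibbs} and \eqref{eq:quasiBernoulli}, lower-bound the conditional probability of a large increment using the CLT and quasi-independence, and invoke L\'evy's conditional Borel--Cantelli lemma. The paper instead observes that $B=\{\limsup_k T_{n_k}/\sqrt{2\rho_\mu n_k}=\infty\}$ is a tail event (by \eqref{eq:quasiBernoulli}), applies exactness of Gibbs measures to get a zero--one law, and rules out $\mu(B)=0$ by showing that $\mu(B_M)=1$ for some finite $M$ would contradict the Central Limit Theorem. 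The paper's route is noticeably cleaner: it requires no rapid-growth condition on $(n_k)$, no conditional-probability estimates, and in particular none of the admissibility/mixing bookkeeping you correctly flag as the delicate point (the bound ``at least a fixed positive multiple of $\mu(\{T_{n_{k+1}-n_k}>(C+2)\sqrt{n_{k+1}}\})$'' needs a mixing block of fixed length $M$ with $A^M>0$ inserted, then \eqref{eq:quasiBernoulli} to compare with the unconditional measure — doable but not one-line). What your route buys is that it avoids appealing to exactness of Gibbs measures and is in that sense more self-contained and more elementary; the tradeoff is that the conditional-probability step would need to be fleshed out carefully before the argument is complete.
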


\begin{proof}
	By our divergence assumption we have
	$$
	\sum_{n=1}^{\infty}\#\psi^{-1}(n)e^{-h_\mu n+K\sqrt{2\rho_\mu n}}=\sum_{n=1}^{\infty}e^{-h_\mu \psi(n)+K\sqrt{2\rho_\mu\psi(n)}}=\infty.
	$$	
	Hence, there exists a sequence $(n_k)_{k=1}^{\infty}$ such that
	\begin{equation}
	\label{cardinality lower bound}
	\#\psi^{-1}(n_k)e^{-h_\mu n_k+K\sqrt{2\rho_\mu n_k}}>n_k^{-2}
	\end{equation} holds for all $k \in \N$. Consider the set $$B:=\left\{\iii\in \Sigma_{A}:	\limsup\limits_{k\to\infty}\frac{\log\mu([\iii|_1^{n_k}])+h_\mu n_k}{\sqrt{2\rho_\mu n_k}}=\infty\right\}.$$ Appealing to the fact that $\mu$ satisfies \eqref{eq:quasiBernoulli}, it can be shown that being an element of $B$ is independent of any initial string of digits. Therefore $B$ is a \emph{tail event}, i.e. $B$ belongs to the tail $\sigma$-algebra given by $\bigcap_{n=0}^{\infty}{\sigma^{-n}\cB^+}$ where $\cB^+$ is the Borel $\sigma$-algebra on $\Sigma_{A,*}$. It is well known that Gibbs measures for shifts of finite type are exact, i.e. the tail $\sigma$-algebra is the trivial $\sigma$-algebra (see \cite[Remark 3, Page 29]{PP}). It follows from exactness that either $\mu(B)=0$ or $\mu(B)=1$. Let us suppose that $\mu(B)=0$. Then there exists $M\in\mathbb{R}$ sufficiently large for which $$B_{M}=\left\{\iii\in \Sigma_{A}:	\limsup\limits_{k\to\infty}\frac{\log\mu([\iii|_1^{n_k}])+h_\mu n_k}{\sqrt{2\rho_\mu n_k}}\leq M\right\}$$ has positive measure. Importantly $B_{M}$ is also a tail event. This can also be shown using \eqref{eq:quasiBernoulli}. Therefore by exactness we must have $\mu(B_{M})=1$. However this contradicts the Central Limit Theorem for Gibbs measures, see Theorem~\ref{thm:CLT}. Therefore we must have $\mu(B)=1$.

	Now fix an arbitrary $K' \in \N$ with $K'>K$ and define $g:\mathbb{N}\to\mathbb{N}$ according to the rule $g(n) = K'$. Using the fact that $\mu(B)=1$ we may conclude that \eqref{eq:cond1} from Theorem \ref{thm:meas1} is satisfied for this choice of $g$. Using \eqref{cardinality lower bound} we have that
	$$
	\#\psi^{-1}(n_k)e^{-h_\mu n_k+K'\sqrt{2\rho_\mu n_k}}>\frac{e^{(K'-K)\sqrt{2\rho_\mu n_k}}}{n_k^2}\to\infty\text{ as }k\to\infty.
	$$
	Therefore \eqref{eq:cond2} from Theorem~\ref{thm:meas1} is satisfied. Our result now follows from Theorem \ref{thm:meas1}. \qedhere	
\end{proof}

Observe that the negation of \eqref{eq:cond2} from Theorem~\ref{thm:meas1} is
$$
\limsup_{n\to\infty}\#\psi^{-1}(n)e^{-h_\mu n+\sqrt{2\rho_\mu n}g(n) }<\infty.
$$
Unfortunately, this condition does not correspond to the convergent series condition given in Theorem~\ref{thm:convergence}. Therefore we cannot use this condition to obtain a simple criterion for determining the measure of $R_{\psi}$.

After examining Theorem \ref{thm:convergence} and recalling the analogy made earlier with works on shrinking targets where formulas of the form \eqref{volume sums} play an important role, a natural candidate for a divergence criterion for ensuring $R_{\psi}$ will have full measure is
$$
\sum_{n=1}^\infty e^{-h_\mu\psi(n)+\sqrt{2\rho_\mu\psi(n)\log\log\psi(n)}}=\infty.
$$
The next theorem demonstrates that this intuitive guess is incorrect and that this divergence criterion alone is insufficient for determining the measure of $R_{\psi}$.

\begin{theorem}\label{thm:nocharac}
	Let $\mu$ be a non-uniform Bernoulli measure and let $g\colon\N\mapsto[1,\infty)$ be a function satisfying $\limsup\limits_{n\to\infty}\frac{g(n)}{\sqrt{\log\log(n)}}\leq 1$ and $\lim\limits_{n\to\infty}g(n)=\infty$. Then there exists a monotone increasing function $\psi\colon\N\mapsto\N$ such that $\psi(n)\leq n$ for all $n\in\N$ and
	$$
	\sum_{n=1}^\infty e^{-h_\mu\psi(n)+\sqrt{2\rho_\mu\psi(n)}g(\psi(n))}=\infty
	$$
	but $\mu(R_\psi)=0$.
\end{theorem}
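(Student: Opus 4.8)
The plan is to construct $\psi$ which is constant on a sequence of blocks of consecutive integers, with the blocks made very long along a sparse subsequence of values; the long blocks will be just large enough that each contributes at least $1$ to the divergent sum, but still short enough that a first Borel--Cantelli argument forces $\mu(R_\psi)=0$. Concretely, since $\psi$ must be monotone increasing with $\psi(n)\le n$, it is determined by its block lengths $L_m:=\#\psi^{-1}(m)\ge1$: set $N_1=1$, $N_{m+1}=N_m+L_m$ and let $\psi\equiv m$ on $[N_m,N_{m+1})$. Then $N_m\ge m$, so $\psi(n)\le n$ for every $n$, and for $n\in[N_m,N_{m+1})$ the coordinates $i_{n+1}\ldots i_{n+m}$ do not overlap $i_1\ldots i_m$. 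For such $n$ one has $d(\sigma^n\iii,\iii)\le K^{-\psi(n)}$ if and only if $i_{n+1}\ldots i_{n+m}=i_1\ldots i_m$, and since each block is finite, $\iii$ returns infinitely often precisely when $\iii\in E_m$ for infinitely many $m$, where
\[
E_m:=\bigl\{\iii\in\Sigma:\ i_{n+1}\ldots i_{n+m}=i_1\ldots i_m\ \text{for some }n\in[N_m,N_{m+1})\bigr\}.
\]
Hence $R_\psi=\limsup_m E_m$, and by Lemma~\ref{lem:BC} it suffices to choose the $L_m$ so that $\sum_m L_m e^{-h_\mu m+\sqrt{2\rho_\mu m}\,g(m)}=\infty$ while $\sum_m\mu(E_m)<\infty$.

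The key estimate is an upper bound for $\mu(E_m)$, and its whole point is that the $L_m$ return events inside block $m$ all refer to the same target word $\iii|_1^m$. Decomposing $E_m$ according to the value of $\iii|_1^m$, using the independence of the coordinates under a Bernoulli measure (together with $N_m\ge m$ and the product rule \eqref{Bernoulli}), and then a union bound over $n\in[N_m,N_{m+1})$ followed by the elementary inequality $\min(1,x)\le x^s$, one obtains, for every $s\in[0,1]$,
\[
\mu(E_m)\ \le\ \int_\Sigma\min\!\bigl(1,\,L_m\mu([\iii|_1^m])\bigr)\,d\mu\ \le\ L_m^{\,s}\left(\sum_{i=1}^K p_i^{\,1+s}\right)^{m},
\]
where we used $\int_\Sigma\mu([\iii|_1^m])^s\,d\mu=\bigl(\sum_i p_i^{1+s}\bigr)^m$. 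Since $\log\sum_i p_i^{1+s}$ vanishes at $s=0$ with first derivative $-h_\mu$ and second derivative $\rho_\mu$ there, we have the expansion $\log\sum_i p_i^{1+s}=-h_\mu s+\tfrac12\rho_\mu s^2+O(s^3)$.

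Now set $L_m=1$ for $m$ outside a sparse increasing sequence $(m_k)$ and $L_{m_k}=\bigl\lceil e^{h_\mu m_k-\sqrt{2\rho_\mu m_k}\,g(m_k)}\bigr\rceil$. For the unit blocks, the estimate with $s=1$ gives $\mu(E_m)\le\bigl(\sum_i p_i^2\bigr)^m$, and $\sum_i p_i^2<1$, so these blocks contribute only a finite amount to $\sum_m\mu(E_m)$. For a long block, substituting $\log L_{m_k}\le h_\mu m_k-\sqrt{2\rho_\mu m_k}\,g(m_k)+\log2$ and the expansion above, then optimising at $s_k=\sqrt2\,g(m_k)/\sqrt{\rho_\mu m_k}$ (which tends to $0$ since $g(m)=o(\sqrt m)$, and for which the cubic error $O(s_k^3m_k)$ tends to $0$ since $g(m)=o(m^{1/6})$, the point at which the hypothesis $\limsup_n g(n)/\sqrt{\log\log n}\le1$ is used), one gets $\mu(E_{m_k})\ll e^{-g(m_k)^2}$. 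On the other hand each long block contributes $L_{m_k}e^{-h_\mu m_k+\sqrt{2\rho_\mu m_k}\,g(m_k)}\ge1$ to $\sum_m L_m e^{-h_\mu m+\sqrt{2\rho_\mu m}\,g(m)}$. Since $g(m)\to\infty$, we may now fix the $m_k$ increasing, beyond the threshold where the above estimates are valid, and sparse enough that $g(m_k)^2\ge2\log(k+1)$; then $\sum_k\mu(E_{m_k})\ll\sum_k(k+1)^{-2}<\infty$, so $\sum_m\mu(E_m)<\infty$ and $\mu(R_\psi)=0$ by Lemma~\ref{lem:BC}, whereas $\sum_m L_m e^{-h_\mu m+\sqrt{2\rho_\mu m}g(m)}\ge\sum_k1=\infty$, as required.

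The step I expect to be the main obstacle is the sharp bound $\mu(E_{m_k})\ll e^{-g(m_k)^2}$. The naive union bound $\mu(E_m)\le L_m\int_\Sigma\mu([\iii|_1^m])\,d\mu=L_m\bigl(\sum_i p_i^2\bigr)^m$ is far too weak (indeed it is $\gg1$ for the chosen $L_{m_k}$); instead one must use that $\mu$ charges the atypically large cylinders $\{\iii:\mu([\iii|_1^m])>1/L_m\}$ with only exponentially small mass. This is exactly a moderate-deviation bound for $\log\mu([\iii|_1^m])$, realised for Bernoulli measures as the Chernoff / fractional-moment computation above, and it is effective precisely in the range $\log L_m\le h_\mu m-\sqrt{2\rho_\mu m}\,g(m)$, which is the very range in which the sum $\sum_m L_m e^{-h_\mu m+\sqrt{2\rho_\mu m}g(m)}$ is permitted to diverge. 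Balancing these two constraints, together with the routine verification that the non-special blocks' contribution to $\sum_m\mu(E_m)$ is genuinely summable and the choice of a sufficiently sparse $(m_k)$, is the heart of the matter.
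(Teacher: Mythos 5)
Your proof is correct, and it takes a genuinely different route from the paper's. The paper applies Borel--Cantelli only after restricting to a set $E_N^{(1)}\cap E_N^{(2)}$ of near-full measure on which $\mu([\iii|_1^n])$ obeys an a priori upper bound $Q(n)$ for all $n\ge N$: the event $E_N^{(1)}$ comes from the moderate-deviation estimate (Theorem~\ref{thm:CLT2}, i.e.\ \cite[Theorem~9.4]{Bill}) along a subsequence $(n_k)$ with $g(n_k)^2\approx k$, and $E_N^{(2)}$ from the LIL. Correspondingly the paper's $\psi$ also has non-trivial block lengths $\#\psi^{-1}(n)=\lfloor n^{-2}e^{h_\mu n-\frac54\sqrt{2\rho_\mu n\log\log n}}\rfloor$ off the subsequence, tuned against the LIL bound. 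You instead set $\#\psi^{-1}(m)=1$ off a sparse sequence $(m_k)$ and estimate each $\mu(E_m)$ directly via the fractional-moment (Chernoff) bound $\mu(E_m)\le L_m^s\bigl(\sum_i p_i^{1+s}\bigr)^m$, optimising at $s\asymp g(m)/\sqrt{m}$ for the long blocks and $s=1$ for the unit blocks. This one inequality does the work of both Lemma~\ref{lem:sequence} and the restriction to a typical set, since $\int\mu([\iii|_1^m])^s\,d\mu=\bigl(\sum_i p_i^{1+s}\bigr)^m$ is exactly the tilted moment underlying the moderate-deviation statement; in effect you unfold the proof of Theorem~\ref{thm:CLT2} and integrate it with the Borel--Cantelli computation. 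What your route buys is a self-contained, elementary argument that bypasses both the LIL and \cite[Theorem~9.4]{Bill}; what the paper's route buys is a cleaner modular separation between the probabilistic input and the combinatorics of $\psi$. One cosmetic remark: "sparse enough" is not quite the right word for the choice of $(m_k)$; all that is needed is that $m_k$ increases and $g(m_k)^2\ge 2\log(k+1)$, which is possible for any sufficiently rapidly increasing $m_k$ because $g(n)\to\infty$. Your identification of precisely where the hypothesis $\limsup_n g(n)/\sqrt{\log\log n}\le 1$ enters (namely to ensure $g(m)=o(m^{1/6})$, keeping $s_k\in[0,1]$ and killing the $O(ms^3)$ error) is accurate; the argument would go through under the weaker polynomial-growth condition $g(n)=o(n^{1/6})$.
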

If we take $g(n)=\sqrt{\log\log{n}}$ in Theorem \ref{thm:nocharac}, we see that it is in conflict with the prediction made just before the statement of this theorem and thus this prediction cannot be correct.

What remains of the paper is organised as follows. In Section \ref{Sec:Tools} we recall some useful results from Probability Theory. In Sections \ref{Sec:Convergence result} and \ref{Sec:Fullmeasure} we prove Theorems \ref{thm:convergence} and \ref{thm:meas1} respectively. In Section \ref{Sec:Counterexample} we prove Theorem \ref{thm:nocharac}. Last of all, in Section \ref{Sec:self-similar} we apply our results to the study of dynamics on self-similar sets.

\section{Probabilistic tools}
\label{Sec:Tools}
In this section we recall some probabilistic results relating to Gibbs measures. The following two statements are the Central Limit Theorem and the Law of the Iterated Logarithm for Gibbs measures. These statements follow from \cite[Corollary~1 and Corollary~2]{DenPhi} and \eqref{eq:Gibbs}. We also refer the reader to \cite[Proposition 2.1 and Proposition 2.2]{JorPol} where these results are stated in our language.

\begin{theorem}[Central Limit Theorem]
	\label{thm:CLT}
Let $\mu$ be the Gibbs measure for a H\"older continuous potential that is not cohomologous to a constant. Then we have
$$\lim_{n\to\infty}\mu\left(\left\{\iii\in \Sigma_{A}:\frac{\log\mu([\iii|_1^n])+h_\mu n}{\sqrt{n}}\leq t\right\}\right )	=\frac{1}{\sqrt{2\pi\rho_{\mu}^2}}\int_{-\infty}^{t}e^{-x^2/2\rho_{\mu}^{2}}\, dx$$
\end{theorem}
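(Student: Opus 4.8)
The plan is to prove the Central Limit Theorem for Gibbs measures (Theorem~\ref{thm:CLT}) by reducing the statement about $\log\mu([\iii|_1^n])$ to the classical Birkhoff-sum central limit theorem for the H\"older potential $f$, and then invoking the known CLT for such sums (from \cite{DenPhi} or \cite{PP}). The starting observation is the Gibbs property \eqref{eq:Gibbs}: for every $\iii\in\Sigma_A$ and $n\in\N$ we have
\[
\left|\log\mu([\iii|_1^n]) - \left(-nP(f)+\sum_{k=0}^{n-1}f(\sigma^k\iii)\right)\right|\leq \log C,
\]
so $\log\mu([\iii|_1^n])$ differs from the Birkhoff sum $S_n f(\iii):=\sum_{k=0}^{n-1} f(\sigma^k\iii)$ shifted by $-nP(f)$ by a uniformly bounded error. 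Adding $h_\mu n$ and using the variational identity \eqref{variational principle}, namely $h_\mu = P(f)-\int f\,d\mu$, we get
\[
\log\mu([\iii|_1^n]) + h_\mu n = \left(S_n f(\iii) - n\textstyle\int f\,d\mu\right) + E_n(\iii),
\]
where $\sup_{\iii}|E_n(\iii)|\leq \log C$ for all $n$. Dividing through by $\sqrt{n}$, the error term contributes $O(1/\sqrt n)\to 0$ uniformly, so it suffices to establish the CLT for the normalized Birkhoff sums $\frac{1}{\sqrt n}\left(S_n f(\iii) - n\int f\,d\mu\right)$.

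The next step is to quote the classical CLT for Birkhoff sums of a H\"older potential that is not a coboundary (equivalently $\rho_\mu>0$) with respect to the Gibbs measure $\mu$. This is exactly the content of \cite[Corollary~1]{DenPhi} (see also \cite[Proposition 2.1]{JorPol} and the discussion in \cite{PP}): under these hypotheses,
\[
\frac{1}{\sqrt n}\left(S_n f(\iii) - n\int f\,d\mu\right) \xrightarrow{\ \mathrm{d}\ } \mathcal{N}(0,\rho_\mu^2)
\]
in distribution as $n\to\infty$, where the limiting variance is precisely $\rho_\mu = \lim_n \frac1n \int (S_nf - n\int f\,d\mu)^2\,d\mu$ (here written so that $\rho_\mu$ plays the role of $\sigma^2$ in the usual normalization; note the statement of the theorem uses $\rho_\mu^2$ in the exponent and $\rho_\mu^2$ in the normalizing constant, which matches a Gaussian of variance $\rho_\mu^2$). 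Convergence in distribution means that for every continuity point $t$ of the limiting distribution function — and here, since $\rho_\mu>0$, the Gaussian is nondegenerate and every $t\in\R$ is a continuity point — we have
\[
\lim_{n\to\infty}\mu\!\left(\left\{\iii:\frac{1}{\sqrt n}\big(S_nf(\iii)-n\textstyle\int f\,d\mu\big)\leq t\right\}\right) = \frac{1}{\sqrt{2\pi\rho_\mu^2}}\int_{-\infty}^{t} e^{-x^2/2\rho_\mu^2}\,dx.
\]

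Finally I would combine the two ingredients. Write $Y_n(\iii) := \frac{\log\mu([\iii|_1^n])+h_\mu n}{\sqrt n}$ and $Z_n(\iii):=\frac{1}{\sqrt n}(S_nf(\iii)-n\int f\,d\mu)$, so that $Y_n = Z_n + E_n/\sqrt n$ with $\|E_n/\sqrt n\|_\infty\leq (\log C)/\sqrt n =: \delta_n\to 0$. Then for any $t$ and any $n$,
\[
\mu(\{Z_n\leq t-\delta_n\}) \leq \mu(\{Y_n\leq t\}) \leq \mu(\{Z_n\leq t+\delta_n\}).
\]
Applying the Birkhoff CLT to the right and left (using that the Gaussian distribution function is continuous, hence uniformly continuous, so $\Phi_{\rho_\mu^2}(t\pm\delta_n)\to\Phi_{\rho_\mu^2}(t)$) and a sandwich argument gives $\lim_{n\to\infty}\mu(\{Y_n\leq t\}) = \frac{1}{\sqrt{2\pi\rho_\mu^2}}\int_{-\infty}^t e^{-x^2/2\rho_\mu^2}\,dx$, which is the claimed identity. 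The only genuine subtlety — the ``main obstacle'' if one insists on self-containedness — is the Birkhoff-sum CLT itself, whose proof requires the spectral theory of the transfer operator (a perturbation argument on $L_{f+it\phi}$ near $t=0$, showing a spectral gap persists and extracting the Gaussian via the characteristic function / Nagaev–Guivarc'h method); but since that is precisely what \cite{DenPhi} and \cite{PP} supply and the excerpt explicitly permits citing them, the argument above is complete modulo those references. Everything else — the Gibbs comparison, the variational identity, and the sandwich — is routine.
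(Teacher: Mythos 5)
Your proposal is correct and follows essentially the same route as the paper, which simply cites \cite[Corollary 1]{DenPhi} together with the Gibbs estimate \eqref{eq:Gibbs}; you have spelled out the implicit reduction (Gibbs comparison to the Birkhoff sum, the variational identity \eqref{variational principle}, the Birkhoff-sum CLT, and a sandwich to absorb the uniformly bounded error $O(1/\sqrt n)$). One small point worth resolving rather than flagging parenthetically: with the paper's own definition, $\rho_\mu=\lim_n\frac1n\int(S_nf-n\int f\,d\mu)^2\,d\mu$ is already the variance, so your reduction actually yields convergence to $\mathcal{N}(0,\rho_\mu)$, and the $\rho_\mu^2$ appearing in the displayed formula of Theorem~\ref{thm:CLT} is evidently a typo (compare Theorems~\ref{thm:LIL} and \ref{thm:CLT2}, which consistently treat $\rho_\mu$ as the variance and $\sqrt{\rho_\mu}$ as the standard deviation).
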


\begin{theorem}[Law of the Iterated Logarithm] \label{thm:LIL} Let $\mu$ be the Gibbs measure for a H\"older continuous potential that is not cohomologous to a constant. Then we have
	$$
	\limsup_{n\to\infty}\frac{\log\mu([\iii|_1^n])+h_\mu n}{\sqrt{2\rho_\mu n\log\log{n}}}=1\quad \text{ for $\mu$-almost every $\iii  \in \Sigma_{A}$.}
	$$
\end{theorem}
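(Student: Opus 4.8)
The plan is to reduce the statement to the law of the iterated logarithm for Birkhoff sums of the Hölder potential $f$ defining $\mu$, and then to quote the almost sure invariance principle of Denker and Philipp. Write $S_nf(\iii)=\sum_{k=0}^{n-1}f(\sigma^k\iii)$ and $\bar f=\int f\,d\mu$. First I would use the Gibbs property \eqref{eq:Gibbs}, which provides a constant $C>1$ with
\[
\left|\log\mu([\iii|_1^n])+nP(f)-S_nf(\iii)\right|\leq\log C\qquad\text{for all }\iii\in\Sigma_A,\ n\in\N .
\]
Combining this with the variational principle \eqref{variational principle}, which gives $h_\mu n=nP(f)-n\bar f$, I obtain
\[
\log\mu([\iii|_1^n])+h_\mu n = \big(S_nf(\iii)-n\bar f\big)+O(1),
\]
where the error term is bounded by $\log C$, uniformly in $\iii$ and $n$.

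Next, since $f$ is not cohomologous to a constant we have $\rho_\mu>0$, so the normalising sequence $\sqrt{2\rho_\mu n\log\log n}$ tends to infinity, and the bounded error term is negligible after division. Hence, for \emph{every} $\iii\in\Sigma_A$,
\[
\limsup_{n\to\infty}\frac{\log\mu([\iii|_1^n])+h_\mu n}{\sqrt{2\rho_\mu n\log\log n}}
=\limsup_{n\to\infty}\frac{S_nf(\iii)-n\bar f}{\sqrt{2\rho_\mu n\log\log n}} .
\]
Thus it suffices to establish the right-hand identity for $\mu$-almost every $\iii$.

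This is the classical law of the iterated logarithm for the centred Birkhoff sums $S_nf-n\bar f$ under the Gibbs measure $\mu$, and it is exactly the content of \cite[Corollary~2]{DenPhi} (see also \cite[Proposition~2.2]{JorPol}). For completeness, the mechanism is: the almost sure invariance principle gives, after enlarging the probability space, a Brownian motion $(W_t)_{t\geq0}$ with variance parameter $\rho_\mu$ such that $S_nf(\iii)-n\bar f-W_n=o(\sqrt{n\log\log n})$ for $\mu$-a.e.\ $\iii$; the Hartman--Wintner law of the iterated logarithm for Brownian motion then yields $\limsup_{n\to\infty}(S_nf(\iii)-n\bar f)/\sqrt{2\rho_\mu n\log\log n}=1$ $\mu$-a.s. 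Together with the reduction above, this proves the theorem.

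The one genuinely non-routine ingredient is the probabilistic core of the last step, namely the almost sure invariance principle for Hölder observables under Gibbs measures; but this is precisely what is imported from \cite{DenPhi}, so in practice the only work required is the elementary translation carried out in the first two steps. Alternatively, one could avoid the invariance principle and derive the law of the iterated logarithm directly from a Gordin-type martingale approximation --- using the spectral gap of the transfer operator to bound the coboundary remainder --- followed by Stout's martingale law of the iterated logarithm; citing \cite{DenPhi} is cleaner.
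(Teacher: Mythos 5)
Your proposal is correct and follows essentially the same route as the paper: the paper simply states that Theorem~\ref{thm:LIL} follows from \cite[Corollary~2]{DenPhi} together with the Gibbs property \eqref{eq:Gibbs} (pointing also to \cite[Proposition~2.2]{JorPol}), and your first two steps are precisely the elementary reduction via \eqref{eq:Gibbs} and \eqref{variational principle} that makes that citation apply.
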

We will also use the following well known property of Gibbs measures for H\"older continuous potentials which states that they have exponential decay of correlations. A proof of the following statement is contained within the proof of \cite[Proposition~1.14]{Bowen}.

\begin{theorem}[Exponential Decay of Correlations]\label{thm:decay}
	Let $\mu$ be the Gibbs measure for a H\"older continuous potential. Then there exist constants $D>0$ and $0<\gamma<1$ such that for every $\iii,\jjj\in\Sigma_{A,*}$ and $n\geq|\iii|$ we have
	$$
	\left|\mu([\iii]\cap\sigma^{-n}[\jjj])-\mu([\iii])\mu([\jjj])\right|\leq D\gamma^{n-|\iii|}\mu([\iii])\mu([\jjj]).
	$$
\end{theorem}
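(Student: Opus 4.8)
To prove Theorem~\ref{thm:decay}, the plan is to run the Ruelle--Perron--Frobenius transfer operator argument and then feed cylinder indicators into the resulting exponential estimate after one preliminary ``smoothing'' step. Let $f$ denote the H\"older potential. By standard Ruelle--Perron--Frobenius theory (see \cite[Chapter~1]{Bowen}, which is exactly where the cited proof of Proposition~1.14 sits) there is a strictly positive H\"older eigenfunction $h$, and the \emph{normalised} potential $\bar f:=f-P(f)+\log h-\log(h\circ\sigma)$ is again H\"older, satisfies $P(\bar f)=0$, has the same Gibbs measure $\mu$ as $f$, and its transfer operator
\[
\mathcal{L}\phi(\iii):=\sum_{\sigma\jjj=\iii}e^{\bar f(\jjj)}\phi(\jjj)
\]
satisfies $\mathcal{L}\mathbf{1}=\mathbf{1}$ and $\mathcal{L}^{\ast}\mu=\mu$. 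The crucial input --- and this is the genuine theorem, for which topological mixing (primitivity of $A$, via $A^{M}>0$) is essential --- is the spectral gap: there exist $C>0$ and $\theta\in(0,1)$ such that, with $\|\cdot\|_{\ast}$ a fixed H\"older norm on $\Sigma_{A}$,
\[
\Bigl\|\mathcal{L}^{k}\phi-\textstyle\int\phi\,d\mu\Bigr\|_{\infty}\leq C\,\theta^{k}\,\|\phi\|_{\ast}\qquad\text{for every H\"older }\phi\text{ and every }k\geq0.
\]
In \cite[Chapter~1]{Bowen} this is obtained via a Lasota--Yorke-type inequality together with a cone-contraction (Birkhoff) argument.

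From the identities $\int\mathcal{L}^{n}\psi\,d\mu=\int\psi\,d\mu$ and $\mathcal{L}^{n}\bigl((\eta\circ\sigma^{n})\psi\bigr)=\eta\,\mathcal{L}^{n}\psi$, valid for continuous $\psi,\eta$, one obtains for all $\iii,\jjj\in\Sigma_{A,*}$ and all $n\geq|\iii|$ the identity
\[
\mu\bigl([\iii]\cap\sigma^{-n}[\jjj]\bigr)=\int\mathbf{1}_{[\jjj]}\cdot\bigl(\mathcal{L}^{n}\mathbf{1}_{[\iii]}\bigr)\,d\mu.
\]
One cannot apply the spectral gap directly to $\mathbf{1}_{[\iii]}$, since $\mathbf{1}_{[\iii]}$ is H\"older but $\|\mathbf{1}_{[\iii]}\|_{\ast}$ grows exponentially in $|\iii|$, which would wreck the dependence on $n-|\iii|$. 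The remedy is to first apply $\mathcal{L}$ exactly $m:=|\iii|$ times; a direct computation from the definition of $\mathcal{L}$ gives
\[
\phi_{\iii}(\iii'):=\bigl(\mathcal{L}^{m}\mathbf{1}_{[\iii]}\bigr)(\iii')=e^{\,S_{m}\bar f(\iii\iii')}\,\mathbf{1}\bigl[\iii\iii'\in\Sigma_{A}\bigr],\qquad S_{m}\bar f(\iii\iii'):=\sum_{t=0}^{m-1}\bar f\bigl(\sigma^{t}(\iii\iii')\bigr).
\]
Since $\bar f$ is H\"older, the map $\iii'\mapsto S_{m}\bar f(\iii\iii')$ is H\"older in the tail with a constant depending only on the H\"older data of $\bar f$ (the contributions of the last coordinates decay geometrically), and since $P(\bar f)=0$ the Gibbs property \eqref{eq:Gibbs} for $\bar f$ gives $e^{S_{m}\bar f(\iii z)}\asymp\mu([\iii])$ uniformly in the tail $z$; hence $\|\phi_{\iii}\|_{\ast}\ll\mu([\iii])$ with an implied constant independent of $\iii$. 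Also $\int\phi_{\iii}\,d\mu=\int\mathbf{1}_{[\iii]}\,d\mu=\mu([\iii])$.

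It remains to combine these: writing $n=m+(n-m)$ and applying the spectral gap to $\phi_{\iii}$ with $k=n-m$,
\[
\bigl\|\mathcal{L}^{\,n-m}\phi_{\iii}-\mu([\iii])\bigr\|_{\infty}\leq C\,\theta^{\,n-m}\,\|\phi_{\iii}\|_{\ast}\ll\theta^{\,n-m}\mu([\iii]),
\]
and therefore
\begin{align*}
\bigl|\mu([\iii]\cap\sigma^{-n}[\jjj])-\mu([\iii])\mu([\jjj])\bigr|
&=\Bigl|\int\mathbf{1}_{[\jjj]}\bigl(\mathcal{L}^{\,n-m}\phi_{\iii}-\mu([\iii])\bigr)\,d\mu\Bigr|\\
&\leq\mu([\jjj])\,\bigl\|\mathcal{L}^{\,n-m}\phi_{\iii}-\mu([\iii])\bigr\|_{\infty}\\
&\ll\theta^{\,n-m}\mu([\iii])\mu([\jjj]).
\end{align*}
This is the asserted bound with $\gamma=\theta$ and $D$ the resulting implied constant; since the spectral gap was stated for all $k\geq0$, the same computation covers the boundary case $n=|\iii|$ (one may alternatively note $[\iii]\cap\sigma^{-|\iii|}[\jjj]=[\iii\jjj]$ or $\varnothing$ and invoke \eqref{eq:quasiBernoulli}), enlarging $D$ if necessary. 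The main obstacle is the spectral gap itself --- this is the real content and where the hypotheses on $A$ are used; everything else is the bookkeeping needed to reduce the cylinder statement to it while keeping all constants uniform in $\iii$ and $\jjj$.
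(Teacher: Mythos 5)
Your proof is correct and follows the same Ruelle–Perron–Frobenius spectral-gap argument that the paper invokes by citing Bowen's Proposition~1.14; the paper itself gives no proof. The one genuinely delicate step — pre-applying $\mathcal{L}^{|\iii|}$ so that the resulting function has Hölder norm comparable to $\mu([\iii])$ rather than $\alpha^{-|\iii|}$, which is what yields the exponent $n-|\iii|$ and the factor $\mu([\iii])$ simultaneously — is handled correctly, and the use of $P(\bar f)=0$ together with the Gibbs inequality to control both $\|\phi_\iii\|_\infty$ and the Hölder seminorm is exactly right.
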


In the special case of Bernoulli measures, we will make use of the following more sophisticated estimate in the Central Limit Theorem. A proof of this statement can be found in \cite[Theorem~9.4]{Bill}.

\begin{theorem}\label{thm:CLT2}
	Let $\mu$ be a non-uniform Bernoulli measure and let $(a_n)_{n=1}^{\infty}$ be a sequence of real numbers such that $\lim_{n\to\infty}a_n=\infty$ and $\lim_{n\to\infty}\frac{a_n}{\sqrt{n}}=0$. Then
	$$
	\mu\left(\left\{\iii\in\Sigma:\frac{\log\mu([\iii|_1^n])+h_\mu n}{\sqrt{\rho_\mu}}\geq a_n\sqrt{n} \right\}\right)=e^{-a_n^2(1+\zeta_n)/2},
	$$
	for some sequence of real numbers $(\zeta_n)_{n=1}^{\infty}$ satisfying  $\lim_{n\to\infty}\zeta_n =0$.
\end{theorem}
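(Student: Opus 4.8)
The plan is to reduce the statement to a classical moderate-deviations estimate for sums of i.i.d.\ bounded random variables. For a Bernoulli measure with probability vector $(p_i)_{i=1}^{K}$ (all entries positive, after discarding any zero entries) we have $\log\mu([\iii|_1^n])=\sum_{k=1}^{n}\log p_{i_k}$, and under $\mu$ the coordinates $i_1,i_2,\dots$ are i.i.d. Hence $X_k:=\log p_{i_k}$ are i.i.d.\ variables taking finitely many values, with $\mathbb{E}[X_1]=\sum_i p_i\log p_i=-h_\mu$ and $\operatorname{Var}(X_1)=\sum_i p_i(\log p_i)^2-(\sum_i p_i\log p_i)^2=\rho_\mu$. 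Writing $S_n=\sum_{k=1}^{n}(X_k-\mathbb{E}X_1)=\log\mu([\iii|_1^n])+h_\mu n$ and $b_n=\sqrt{\rho_\mu}\,a_n\sqrt{n}$, the asserted identity is exactly
$$\mu\bigl(\{\iii\in\Sigma:S_n(\iii)\geq b_n\}\bigr)=e^{-a_n^2(1+\zeta_n)/2},\qquad\zeta_n\to0,$$
for a sum of centered, bounded, i.i.d.\ variables of variance $\rho_\mu>0$. Positivity of the variance is precisely where non-uniformity of $\mu$ enters: were all $p_i$ equal, $X_1$ would be almost surely constant. Since $a_n\to\infty$ and $b_n=o(n)$, this is the moderate-deviations regime, and it suffices to establish the logarithmic asymptotics $\log\mu(\{S_n\geq b_n\})=-\tfrac12 a_n^2(1+o(1))$.

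I would first record the cumulant generating function $\Lambda(t)=\log\mathbb{E}[e^{t(X_1-\mathbb{E}X_1)}]$, which is finite and real-analytic on all of $\mathbb{R}$ since $X_1$ is bounded, with $\Lambda(0)=\Lambda'(0)=0$, $\Lambda''(0)=\rho_\mu$, hence $\Lambda(t)=\tfrac12\rho_\mu t^2+O(t^3)$ as $t\to0$; strict convexity follows from non-degeneracy of $X_1$. For each large $n$ I would choose $t_n>0$ with $\Lambda'(t_n)=b_n/n=\sqrt{\rho_\mu}\,a_n/\sqrt{n}$ (possible since the right-hand side tends to $0$ and lies in the interior of the range of $\Lambda'$), so that $t_n=\frac{a_n}{\sqrt{\rho_\mu}\sqrt{n}}(1+o(1))\to0$. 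The exponential Chebyshev inequality then gives
$$\mu\bigl(\{S_n\geq b_n\}\bigr)\leq e^{-t_n b_n+n\Lambda(t_n)},$$
and substituting the Taylor expansion of $\Lambda$ turns the exponent into $-a_n^2(1+o(1))+\tfrac12 a_n^2(1+o(1))+O(a_n^3/\sqrt{n})=-\tfrac12 a_n^2(1+o(1))$, since $a_n^3/\sqrt{n}=a_n^2\cdot(a_n/\sqrt{n})=o(a_n^2)$. This settles the upper bound.

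For the matching lower bound I would use the change-of-measure (exponential tilting) argument. Let $\widetilde\mu_n$ be the Bernoulli measure under which each coordinate has the tilted law proportional to $e^{t_n(X_1-\mathbb{E}X_1)}$, with the same $t_n$, so that $\mathbb{E}_{\widetilde\mu_n}[S_n]=n\Lambda'(t_n)=b_n$ and $\operatorname{Var}_{\widetilde\mu_n}(S_n)=n\Lambda''(t_n)=\rho_\mu n(1+o(1))$. Reversing the tilt,
$$\mu\bigl(\{S_n\geq b_n\}\bigr)=\mathbb{E}_{\widetilde\mu_n}\Bigl[\mathbf{1}_{\{S_n\geq b_n\}}\,e^{-t_nS_n+n\Lambda(t_n)}\Bigr],$$
and restricting to the event $\{b_n\leq S_n\leq b_n+\sqrt{n}\}$, on which $e^{-t_nS_n}\geq e^{-t_n(b_n+\sqrt{n})}$, yields
$$\mu\bigl(\{S_n\geq b_n\}\bigr)\geq\widetilde\mu_n\bigl(\{b_n\leq S_n\leq b_n+\sqrt{n}\}\bigr)\,e^{-t_n b_n-t_n\sqrt{n}+n\Lambda(t_n)}.$$
Since $t_n\sqrt{n}=O(a_n)=o(a_n^2)$ the exponent is again $-\tfrac12 a_n^2(1+o(1))$, while $\widetilde\mu_n(\{b_n\leq S_n\leq b_n+\sqrt{n}\})\to\Phi(1/\sqrt{\rho_\mu})-\Phi(0)>0$ by the central limit theorem for the triangular array $(X_k-\mathbb{E}X_1)$ under $\widetilde\mu_n$ (the tilted laws converge to the original as $t_n\to0$ and have uniformly bounded third absolute moments, so Lindeberg--Feller or a Berry--Esseen bound applies). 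Combining the two bounds gives $\log\mu(\{S_n\geq b_n\})=-\tfrac12 a_n^2(1+o(1))$, which is the claim with $\zeta_n\to0$.

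The delicate point is the lower bound, specifically that $\widetilde\mu_n(\{b_n\leq S_n\leq b_n+\sqrt{n}\})$ stays bounded away from $0$: because the reference measure varies with $n$, one needs a central limit theorem (or Berry--Esseen estimate) uniform over the one-parameter family of tilts $t_n\to0$. This is routine for bounded summands but requires care, and it is exactly what underlies the cited \cite[Theorem~9.4]{Bill}; alternatively the logarithmic form follows from the G\"artner--Ellis theorem, at the cost of a less transparent identification of the constant $\tfrac12$.
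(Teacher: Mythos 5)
Your proof is correct. In the paper this statement is not proved at all---the authors simply cite Billingsley's \emph{Probability and Measure}, Theorem~9.4, which is exactly the moderate-deviations theorem for i.i.d.\ sums with finite exponential moments. Your argument supplies a self-contained proof via the standard Cram\'er/Esscher exponential-tilting mechanism: the Chernoff bound with $t_n$ solving $\Lambda'(t_n)=b_n/n$ gives the upper bound, and tilting by $t_n$ plus a triangular-array CLT for the interval $[b_n,b_n+\sqrt n]$ gives the matching lower bound; the bookkeeping with the Taylor expansion $\Lambda(t)=\tfrac12\rho_\mu t^2+O(t^3)$, the cancellation $n\Lambda(t_n)-t_nb_n=-\tfrac12 a_n^2(1+o(1))$, and the observations that $nt_n^3=o(a_n^2)$ and $t_n\sqrt n=O(a_n)=o(a_n^2)$ are all handled correctly, as is the justification of the Lindeberg/Berry--Esseen step using boundedness of $|\log p_{i_1}|$ and $\Lambda''(t_n)\to\rho_\mu>0$. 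This is essentially the same tilting idea that underlies Billingsley's proof, so the two ``approaches'' are not really different in spirit; you have just written out the details the paper delegates to the reference.
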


Finally, from time-to-time, we will also call upon the first Borel--Cantelli Lemma, see for example \cite[Lemma 1.2]{Harman}.

\begin{lemma} \label{lem:BC}
Let $(X,\mathcal{B},\mu)$ be a measure space. Let $(A_j)_{j=1}^{\infty}$ be a collection of measurable subsets of $X$. If 
\[\sum_{j=1}^{\infty}{\mu(A_j)} < \infty,\]
then 
\[\mu\left(\{x \in X: x \in A_j \text{ for infinitely many } j \in \N\} \right)=0.\]
\end{lemma}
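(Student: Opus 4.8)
\textbf{Proof proposal for Lemma~\ref{lem:BC}.} The plan is the classical argument via the tail of a convergent series. First I would observe that the event in question can be written as a $\limsup$ of the sets $A_j$: a point $x$ lies in $A_j$ for infinitely many $j$ precisely when, for every $n\in\N$, there is some $j\geq n$ with $x\in A_j$. Hence
\[
\{x\in X: x\in A_j \text{ for infinitely many } j\in\N\}=\bigcap_{n=1}^{\infty}\bigcup_{j=n}^{\infty}A_j=:A_\infty.
\]
In particular $A_\infty$ is measurable, being a countable intersection of countable unions of members of $\mathcal{B}$.

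Next, for each fixed $n\in\N$, monotonicity of $\mu$ together with countable subadditivity gives
\[
\mu(A_\infty)\leq\mu\left(\bigcup_{j=n}^{\infty}A_j\right)\leq\sum_{j=n}^{\infty}\mu(A_j).
\]
The left-hand side does not depend on $n$, so I would let $n\to\infty$ on the right. Since $\sum_{j=1}^{\infty}\mu(A_j)<\infty$ by hypothesis, its tail sums satisfy $\sum_{j=n}^{\infty}\mu(A_j)\to 0$ as $n\to\infty$. Therefore $\mu(A_\infty)\leq\inf_{n\in\N}\sum_{j=n}^{\infty}\mu(A_j)=0$, which forces $\mu(A_\infty)=0$, as required.

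There is essentially no obstacle here: the only points requiring care are the set-theoretic identification of the "infinitely often" event with the $\limsup$ set (a routine unwinding of quantifiers) and the passage to the limit in the tail sum, which is valid precisely because convergence of a series of nonnegative terms is equivalent to its tail sums vanishing. Both steps are elementary and use only monotonicity and countable subadditivity of the measure $\mu$, with no appeal to finiteness of $\mu(X)$ or to any structure on $(X,\mathcal{B})$ beyond that of an abstract measure space.
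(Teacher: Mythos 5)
Your proof is correct and is the standard textbook argument for the first Borel--Cantelli Lemma. The paper does not prove this lemma at all — it simply cites it from Harman's \emph{Metric Number Theory} (Lemma 1.2) — so there is no in-paper proof to compare against; your argument via the tail of the convergent series, using only monotonicity and countable subadditivity, is exactly the classical one and is complete.
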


\section{Proof of Theorem \ref{thm:convergence}}
\label{Sec:Convergence result}
In this section we prove Theorem \ref{thm:convergence}.

\begin{proof}
Let $\psi\colon\N\mapsto\N$ be a function satisfying
\begin{equation}
\label{psi convergence}
\sum_{n=1}^\infty e^{-h_\mu\psi(n)+(1+\varepsilon)\sqrt{2\rho_\mu\psi(n)\log\log\psi(n)}}<\infty
\end{equation} for some $\varepsilon>0$. Define $\tilde{\psi}:\mathbb{N}\to\mathbb{N}$ according to the rule
\[ \tilde{\psi}(n) = \left\{ \begin{array}{ll}
\psi(n) & \mbox{if $\psi(n)\leq n$},\\[2ex]
n & \mbox{if $\psi(n)>n$}.\end{array} \right. \]
Notice that $\tilde{\psi}(n)\leq \psi(n)$ for all $n\in\mathbb{N}$. This implies $R_{\psi}\subset R_{\tilde{\psi}}.$ We also have
\begin{align*}
&\sum_{n=1}^\infty e^{-h_\mu\tilde{\psi}(n)+(1+\varepsilon)\sqrt{2\rho_\mu\tilde{\psi}(n)\log\log\tilde{\psi}(n)}}\\
\leq &\sum_{n=1}^\infty e^{-h_\mu\psi(n)+(1+\varepsilon)\sqrt{2\rho_\mu\psi(n)\log\log\psi(n)}}+\sum_{n=1}^\infty e^{-h_\mu n+(1+\varepsilon)\sqrt{2\rho_\mu n\log\log n}}\\
<&\infty.
\end{align*}Therefore $\tilde{\psi}$ also satisfies \eqref{psi convergence}. This means that without loss of generality we may assume that our original $\psi$ satisfies $\psi(n)\leq n$ for all $n\in \N$. The rest of our proof is given under this assumption.

By Theorem~\ref{thm:LIL}, $\mu\left(\bigcup_{N=1}^\infty E_N\right)=1$, where
$$
E_N=\left\{\iii\in\Sigma_{A}:\mu([\iii|_1^n])\leq e^{-h_\mu n+(1+\varepsilon)\sqrt{2\rho_\mu n\log\log{n}}}\text{ for all }n\geq N\right\}.
$$
Thus, to prove $\mu(R_{\psi})=0$ it is enough to show that $\mu(E_N\cap R_\psi)=0$ for every $N\in\N$. This we do below. In what follows $N\in \N$ is fixed.

For each $n\in\mathbb{N}$ let
$$
F_{A,n}=\{\iii\in\Sigma_{A,n}:\mu([\iii])\leq e^{-h_\mu n+(1+\varepsilon)\sqrt{2\rho_\mu n\log\log{n}}}\}.
$$
We also let $$M(N):=\max\{k\geq1:\psi(k)< N\}+1.$$

By the expression for $R_\psi$ given in \eqref{eq:simpleform}, we know that {$$E_N\cap R_{\psi}=\bigcap_{n=1}^\infty\bigcup_{k=n}^\infty  \bigcup_{\substack{\iii\in\Sigma_{A,\psi(k)}\\\jjj\in\Sigma_{A,k-\psi(k)}\\ \iii\jjj\iii\in \Sigma_{A,k+\psi(k)}}}E_N \cap[\iii\jjj\iii].$$} Therefore to prove $\mu(R_\psi)=0$, by the first Borel--Cantelli Lemma (Lemma \ref{lem:BC}) it is sufficient to show that
\begin{equation}
\label{Eq:Needtoshow}\sum_{k=1}^{\infty}\sum_{\substack{\iii\in\Sigma_{A,\psi(k)}\\\jjj\in\Sigma_{A,k-\psi(k)}\\ \iii\jjj\iii\in \Sigma_{A,k+\psi(k)}}}\mu(E_N\cap[\iii\jjj\iii])<\infty.
\end{equation} This we do presently. We begin by observing that
\begin{align*}
\sum_{k=1}^\infty\sum_{\substack{\iii\in\Sigma_{A,\psi(k)}\\\jjj\in\Sigma_{A,k-\psi(k)}\\ \iii\jjj\iii\in \Sigma_{A,k+\psi(k)}}}\mu(E_N\cap[\iii\jjj\iii])&\leq M(N)+\sum_{k=M(N)+1}^\infty\sum_{\substack{\iii\in\Sigma_{A,\psi(k)}\\\jjj\in\Sigma_{A,k-\psi(k)}\\ \iii\jjj\iii\in \Sigma_{A,k+\psi(k)}}}\mu(E_N\cap[\iii\jjj\iii])\\
	&\leq M(N)+\sum_{k=M(N)+1}^\infty\sum_{\substack{\iii\in F_{A,\psi(k)}\\\jjj\in\Sigma_{A,k-\psi(k)}\\ \iii\jjj\iii\in \Sigma_{A,k+\psi(k)}}}\mu([\iii\jjj\iii]).
\end{align*}
Using \eqref{eq:quasiBernoulli} and the definition of $F_{A,\psi(k)},$ it follows that	
\begin{align*}
&\sum_{k=1}^\infty\sum_{\substack{\iii\in\Sigma_{A,\psi(k)}\\\jjj\in\Sigma_{A,k-\psi(k)}\\ \iii\jjj\iii\in \Sigma_{A,k+\psi(k)}}}\mu(E_N\cap[\iii\jjj\iii])\\
\leq& M(N)+C\sum_{k=M(N)+1}^\infty\sum_{\substack{\iii\in F_{A,\psi(k)}\\\jjj\in\Sigma_{A,k-\psi(k)}\\ \iii\jjj\in \Sigma_{A,k}}}\mu([\iii\jjj])e^{-h_\mu\psi(k)+(1+\varepsilon)\sqrt{2\rho_\mu\psi(k)\log\log \psi(k)}}\\
	\leq& M(N)+C\sum_{k=M(N)+1}^\infty e^{-h_\mu\psi(k)+(1+\varepsilon)\sqrt{2\rho_\mu\psi(k)\log\log \psi(k)}}\\
	<&\infty.
\end{align*}
In the final line we have used \eqref{psi convergence}. Thus we have shown that \eqref{Eq:Needtoshow} holds and this completes our proof.
\end{proof}
\section{Proof of Theorem \ref{thm:meas1}}
\label{Sec:Fullmeasure}

In this section we prove Theorem \ref{thm:meas1}. We now fix $\mu$, $\psi$, $(n_k)_{k=1}^{\infty}$, and $g$ so that the assumptions of this theorem are satisfied. Let
$$S_{\psi,(n_k)}:=\left\{\iii\in \Sigma_{A}:d(\sigma^{p}\iii,\iii)\leq K^{-n_k}\textrm{ for some }p\in \psi^{-1}(n_k)\textrm{ for infinitely many }k\in\mathbb{N}\right\}.$$
 By changing the values that $\psi$ takes for at most finitely many natural numbers we may ensure that $\psi(n)\leq n$ for all $n\in \N$. This assumption does not change the set $S_{\psi,(n_k)}$ or any of our underlying assumptions. As such, without loss of generality we can give our proof under this assumption.

To prove that $\mu(S_{\psi,(n_k)})=1,$ we will show that there exists a constant $c>0$ such that for any $\iii\in \Sigma_{A,*}$ we have
\begin{equation}
\label{eq:density}
\mu([\iii]\cap S_{\psi,(n_k)})\geq c\mu([\iii]).
\end{equation} It then follows from \cite[Lemma 6]{BDV} that $\mu(S_{\psi,(n_k)})=1$. We can think of \cite[Lemma~6]{BDV} as essentially providing us with an analogue of the classical Lebesgue Density Theorem (see, for example, \cite[Corollary 2.14]{Mattila}) that holds for more general measures.
Since $S_{\psi,(n_k)}\subset R_{\psi}$ we also immediately have that $\mu(R_{\psi})=1$. Therefore to complete the proof of Theorem \ref{thm:meas1} it remains to show that \eqref{eq:density} holds. We split our proof of \eqref{eq:density} into three parts below.\\

\noindent \textbf{Part 1. Construction of the auxiliary sets.}

It follows from \eqref{eq:quasiBernoulli} that the set $$\left\{\iii\in\Sigma_{A}:\limsup\limits_{k\to\infty}\frac{\log\mu([\iii|_1^{n_k}])+h_\mu n_k}{\sqrt{2\rho_\mu n_k}g(n_k)}> 1+\varepsilon\right\}$$ is $\sigma$-invariant for every $\varepsilon>0.$ Using the fact that $\mu$ is ergodic together with \eqref{eq:cond1}, we may conclude that there exists $\varepsilon>0$ such that
\begin{equation}\label{eq:cond1b}
\mu\left(\left\{\iii\in\Sigma_{A}:\limsup_{k\to\infty}\frac{\log\mu([\iii|_1^{n_k}])+h_\mu n_k}{\sqrt{2\rho_\mu n_k}g(n_k)}> 1+\varepsilon\right\}\right)=1.
\end{equation}
For the rest of our proof $\varepsilon>0$ is fixed so that \eqref{eq:cond1b} is satisfied.

Without loss of generality, we may assume that our sequence $(n_k)_{k=1}^{\infty}$ is such that
\begin{equation}\label{eq:bound5}
\frac{e^{\varepsilon\sqrt{2n_k\rho_\mu}g(n_k)/4}}{1+n_k+\lceil\frac{-h_\mu}{\log\gamma}\rceil \cdot n_k}\geq D+1
\end{equation}
for every $k\in\N$. Here and throughout our proof, $\gamma\in(0,1)$ and $D>0$ are as in Theorem~\ref{thm:decay}.

Given $\delta>0$ we let $a_1(\delta)\in\mathbb{N}$ be such that for every $n_k\geq a_1(\delta)$ we have
\begin{equation}\label{eq:bound4}
\exp\left(-\#\psi^{-1}(n_k)e^{-h_\mu n_k+\sqrt{2\rho_\mu n_k}g(n_k)}\right)\leq\delta.
\end{equation} The fact $a_1(\delta)$ always exists is a consequence of \eqref{eq:cond2}.

For simplicity, we let 
$$F:=\min_{\iii\in\Sigma_{A}}f(\iii)-P(f).$$ For $\delta>0$ and $m\in\N,$ let $a(m,\delta)>\max\{m,a_1(\delta)\}$ be such that for every $n\geq a(m,\delta)$ we have
\begin{equation}\label{eq:bound12}
\left|\frac{-2\log C+m(F+h_{\mu})}{\sqrt{2\rho_\mu n}g(n)}\right|<\varepsilon/4 \quad \text{and}\quad \frac{\sqrt{n-m}g(n-m)}{\sqrt{n}g(n)}\geq\frac{1+\varepsilon/2}{1+3\varepsilon/4}.
\end{equation}
The latter inequality is possible because of our assumption that $\lim_{n\to\infty}\frac{g(n)}{g(n+1)}=1$. Here $C$ is the constant appearing in \eqref{eq:Gibbs} and \eqref{eq:quasiBernoulli}. The equations \eqref{eq:Gibbs}, \eqref{eq:quasiBernoulli} and \eqref{eq:bound12} imply the following estimate. 

\begin{lemma} \label{lem:bound3} For every $\iii\in\Sigma_{A,*}$, if $\jjj\in \Sigma_{A,*}$ satisfies $|\jjj|> a(|\iii|,\delta),$ $\iii\jjj\in \Sigma_{A,*},$ and
\[\frac{\log\mu([\jjj])+h_\mu|\jjj|}{\sqrt{2\rho_\mu |\jjj|}g(|\jjj|)}\geq1+\frac{3\varepsilon}{4},\]
then
\[\frac{\log\mu([\iii\jjj])+h_\mu|\iii\jjj|}{\sqrt{2\rho_\mu |\iii\jjj|}g(|\iii\jjj|)}\geq1+\frac{\varepsilon}{4}.\]
\end{lemma}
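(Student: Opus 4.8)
The plan is to unwind the definitions of the Gibbs measure on the two cylinders $[\iii\jjj]$ and $[\jjj]$ using the Gibbs property \eqref{eq:Gibbs}, compare the Birkhoff sums of the potential, and then transfer a lower bound on $\frac{\log\mu([\jjj])+h_\mu|\jjj|}{\sqrt{2\rho_\mu|\jjj|}g(|\jjj|)}$ to one on $\frac{\log\mu([\iii\jjj])+h_\mu|\iii\jjj|}{\sqrt{2\rho_\mu|\iii\jjj|}g(|\iii\jjj|)}$. The key point is that prepending a fixed word $\iii$ of length $m:=|\iii|$ changes both the numerator and the denominator only in a controlled way: the numerator changes by an additive term that is $O(m)$ (with the implied constants coming from $C$, $F$ and $h_\mu$), while the denominator is multiplied by a factor $\frac{\sqrt{|\jjj|+m}\,g(|\jjj|+m)}{\sqrt{|\jjj|}\,g(|\jjj|)}$ which is close to $1$ when $|\jjj|$ is large, by the hypothesis $\lim_{n\to\infty}\frac{g(n)}{g(n+1)}=1$. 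Both of these effects are already quantified by the two inequalities in \eqref{eq:bound12}, which is exactly why $a(m,\delta)$ was defined the way it was.

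First I would write $n:=|\jjj|$ and $N:=|\iii\jjj|=n+m$, and use \eqref{eq:quasiBernoulli} to get $\mu([\iii\jjj])\geq C^{-1}\mu([\iii])\mu([\jjj])$, hence
\[
\log\mu([\iii\jjj])+h_\mu N \;\geq\; \log\mu([\jjj])+h_\mu n \;+\;\bigl(\log\mu([\iii])+h_\mu m\bigr)-\log C.
\]
Next I would bound the bracketed term $\log\mu([\iii])+h_\mu m$ from below: by \eqref{eq:Gibbs}, $\mu([\iii])\geq C^{-1}\exp(-mP(f)+\sum_{k=0}^{m-1}f(\sigma^k\iii'))\geq C^{-1}\exp(m(\min f - P(f)))=C^{-1}e^{mF}$ for any $\iii'\in[\iii]$, so $\log\mu([\iii])+h_\mu m\geq -\log C + m(F+h_\mu)$. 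Combining, the numerator on the left satisfies
\[
\log\mu([\iii\jjj])+h_\mu N \;\geq\; \bigl(\log\mu([\jjj])+h_\mu n\bigr) + \bigl(-2\log C + m(F+h_\mu)\bigr).
\]
Now divide by $\sqrt{2\rho_\mu N}\,g(N)$. Using that $\log\mu([\jjj])+h_\mu n \geq (1+3\varepsilon/4)\sqrt{2\rho_\mu n}\,g(n)$ (this is the hypothesis, and the right side is positive for $n$ large) together with the second inequality in \eqref{eq:bound12} applied with this $n\geq a(m,\delta)$, the first term becomes at least $(1+3\varepsilon/4)\cdot\frac{\sqrt{n}g(n)}{\sqrt{N}g(N)}\geq (1+3\varepsilon/4)\cdot\frac{1+\varepsilon/2}{1+3\varepsilon/4}=1+\varepsilon/2$; note $\frac{\sqrt{n}g(n)}{\sqrt{N}g(N)}=\frac{\sqrt{N-m}g(N-m)}{\sqrt{N}g(N)}$, which is the quantity controlled in \eqref{eq:bound12}. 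Meanwhile the first inequality in \eqref{eq:bound12}, applied at $N\geq a(m,\delta)$, bounds the second term in absolute value by $\varepsilon/4$. Hence the left side is at least $1+\varepsilon/2-\varepsilon/4 = 1+\varepsilon/4$, as required.

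The only genuinely delicate step is making sure the various ``for all $n$ large'' clauses line up: the hypothesis gives control at $n=|\jjj|$, while the second part of \eqref{eq:bound12} needs to be read as a statement about $N=|\iii\jjj|$ with $N-m=n$, and the first part of \eqref{eq:bound12} is used at $N$ as well. Since $|\jjj|>a(|\iii|,\delta)$ forces $N=|\jjj|+|\iii|>a(|\iii|,\delta)$ too (as $a(m,\delta)>m$), both uses are legitimate, and one should also check that $n$ is large enough that $(1+3\varepsilon/4)\sqrt{2\rho_\mu n}g(n)>0$, which is immediate since $g\geq 1$. Everything else is bookkeeping with \eqref{eq:Gibbs} and \eqref{eq:quasiBernoulli}; no new idea is needed beyond the observation that prepending a bounded-length word perturbs the normalized quantity by a vanishing amount.
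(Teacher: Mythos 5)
Your proof is correct and is essentially identical to the paper's: decompose via \eqref{eq:quasiBernoulli}, bound $\log\mu([\iii])+h_\mu|\iii|$ from below by $-\log C + |\iii|(F+h_\mu)$ using \eqref{eq:Gibbs}, and then apply the two inequalities in \eqref{eq:bound12} (the second at $N=|\iii\jjj|$, the first at $N$) to transfer the hypothesis bound. The observation that $N > a(|\iii|,\delta)$ follows from $a(m,\delta)>m$ is also used implicitly in the paper, and your explicit remark that the numerator $\log\mu([\jjj])+h_\mu|\jjj|$ is positive (so the two lower bounds can be multiplied) is a correct reading of what the paper does silently.
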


\begin{proof}
First, we note that it follows from \eqref{eq:quasiBernoulli} that
\begin{align*}
\frac{\log\mu([\iii\jjj])+h_\mu|\iii\jjj|}{\sqrt{2\rho_\mu |\iii\jjj|}g(|\iii\jjj|)} &\geq \frac{-\log{C}+\log{\mu([\iii])+h_{\mu}|\iii|}}{\sqrt{2\rho_\mu |\iii\jjj|}g(|\iii\jjj|)} + \frac{\log{\mu([\jjj])}+h_{\mu}|\jjj|}{\sqrt{2\rho_\mu |\iii\jjj|}g(|\iii\jjj|)}.
\end{align*}
It then follows from \eqref{eq:Gibbs} that
\begin{align*}
\frac{\log\mu([\iii\jjj])+h_\mu|\iii\jjj|}{\sqrt{2\rho_\mu |\iii\jjj|}g(|\iii\jjj|)}
           &\geq \frac{-2\log{C}+\sum_{k=0}^{|\iii|-1}{f(\sigma^k\iii)}-|\iii|P(f)+h_{\mu}|\iii|}{\sqrt{2\rho_\mu |\iii\jjj|}g(|\iii\jjj|)} + \frac{\log{\mu([\jjj])}+h_{\mu}|\jjj|}{\sqrt{2\rho_\mu |\iii\jjj|}g(|\iii\jjj|)} \\
           &\geq \frac{-2\log{C}+|\iii|(F+h_{\mu})}{\sqrt{2\rho_\mu |\iii\jjj|}g(|\iii\jjj|)} + \frac{\log{\mu([\jjj])}+h_{\mu}|\jjj|}{\sqrt{2\rho_\mu |\iii\jjj|}g(|\iii\jjj|)}.
\end{align*}
Next we rewrite the fraction on the far right-hand side and then apply the second inequality of ~\eqref{eq:bound12} and the assumption of the lemma to obtain
\begin{align*}
\frac{\log\mu([\iii\jjj])+h_\mu|\iii\jjj|}{\sqrt{2\rho_\mu |\iii\jjj|}g(|\iii\jjj|)}
           &\geq \frac{-2\log{C}+|\iii|(F+h_{\mu})}{\sqrt{2\rho_\mu |\iii\jjj|}g(|\iii\jjj|)} + \left(\frac{\log{\mu([\jjj])}+h_{\mu}|\jjj|}{\sqrt{2\rho_{\mu}|\jjj|}g(|\jjj|)}\right)\left(\frac{\sqrt{2\rho_{\mu}|\jjj|}g(|\jjj|)}{\sqrt{2\rho_\mu |\iii\jjj|}g(|\iii\jjj|)}\right) \\
           &\geq \frac{-2\log{C}+|\iii|(F+h_{\mu})}{\sqrt{2\rho_\mu |\iii\jjj|}g(|\iii\jjj|)} + \left(1+\frac{\varepsilon}{2}\right).
\end{align*}
Finally, the claimed bound is now obtained by applying the first inequality of \eqref{eq:bound12}.
\end{proof}

For every $\delta>0$, and $m\in\N,$ we define $b(m,\delta)$ to be a sufficiently large natural number for which the set
\begin{align*}
G_{h,m,\delta}:=\Bigg\{\jjj\in [h]:&\text{ there exists $n_k\in[a(m,\delta),b(m,\delta)]$ such that }\\
&\phantom{==========}\frac{\log\mu([\jjj|_1^{n_k - m}])+h_\mu (n_k-m)}{\sqrt{2\rho_\mu (n_k-m)}g(n_k-m)}\geq1+\frac{3\varepsilon}{4}\Bigg\}
\end{align*}
satisfies
\begin{equation}\label{eq:boundB}
\mu\left(G_{h,m,\delta}\right)\geq(1-\delta)\mu([h])
\end{equation}
for all $h\in\{1,\ldots,K\}$. The existence of $b(m,\delta)$ follows from \eqref{eq:cond1b} and an application of \eqref{eq:Gibbs}, \eqref{eq:quasiBernoulli}, and our assumptions on $g$. By definition we have
\begin{equation}\label{eq:disjoint}
G_{h,m,\delta}\cap G_{h',m,\delta}=\emptyset\text{ for $h\neq h'$.}
\end{equation}

We can partition each $G_{h,m,\delta}$ into a disjoint collection of cylinders as follows. For each $\ell\in\mathbb{N}$ such that $n_{\ell}\in [a(m,\delta),b(m,\delta)]$ let
\begin{equation}\label{eq:defG}
\begin{split}
G_{h,m,\delta}^{(\ell)}=&\bigg\{\jjj\in \Sigma_{A,n_\ell-m}:j_1=h,\quad \frac{\log\mu([\jjj|_1^{n_\ell-m}])+h_\mu(n_\ell-m)}{\sqrt{2\rho_\mu(n_\ell-m)}g(n_\ell-m)}\geq1+\frac{3\varepsilon}{4}\\
&\qquad\text{ but }\frac{\log\mu([\jjj|_1^{n_k-m}])+h_\mu(n_k-m)}{\sqrt{2\rho_\mu(n_k-m)}g(n_k-m)}<1+\frac{3\varepsilon}{4}\text{ for all }n_k\in[a(m,\delta),n_\ell-1]\bigg\}.
\end{split}
\end{equation}
Then
 $$G_{h,m,\delta}=\bigcup_{\ell:n_\ell\in[a(m,\delta),b(m,\delta)]}[G_{h,m,\delta}^{(\ell)}]\quad \textrm{ and }\quad  [G_{h,m,\delta}^{(\ell_1)}]\cap[G_{h,m,\delta}^{(\ell_2)}]=\emptyset$$ for $\ell_1\neq\ell_2$.

Now we define two sets of words whose corresponding elements of $\Sigma_{A}$ either satisfy or fail a recurrence property defined in terms of a cylinder of length $n_\ell$. Given $m\in \N$ and $\delta>0$ let
$$c(m,\delta):=\max\left\{\bigcup_{k=1}^{b(m,\delta)}{\psi^{-1}(k)}\right\}+b(m,\delta).$$ { This significance of the quantity $c(m,\delta)$ is seen as follows. Given $\iii\in \Sigma_{A,*}$ and $\delta>0,$ using the fact that $\psi(n)\leq n$ for all $n\in N$ and the definition of $c(|\iii|,\delta)$, it follows that for any $n_{\ell}\in [a(|\iii|,\delta),b(|\iii|,\delta)]$ we have
	\begin{equation}
	\label{eq:psiinclusion}
	\psi^{-1}(n_\ell) \subset[n_\ell,c(|\iii|,\delta)-n_\ell].
	\end{equation} This means that if we are interested in sequences satisfying $d(\sigma^{p}\jjj,\jjj)\leq K^{-n_\ell}$ for some $n_{\ell}\in [a(|\iii|,\delta),b(|\iii|,\delta)]$ and $p\in \psi^{-1}(n_\ell),$ then it is sufficient to know only the first $c(|\iii|,\delta)$ entries in~$\jjj$. This fact underpins the definitions of $B_{\iii,\delta}^{(\ell)}$ and $C_{\iii,\delta}^{(\ell)}$ below.}

\vbox{For a word $\iii\in\Sigma_{A,*},$ $\delta>0$ and $l\in\mathbb{N}$ such that $n_\ell\in [a(|\iii|,\delta),b(|\iii|,\delta)],$ let
\begin{align*}
B_{\iii,\delta}^{(\ell)}:=\bigg\{\jjj\in\Sigma_{A,c(|\iii|,\delta)-|\iii|}:&\jjj|_1^{n_\ell-|\iii|}\in \bigcup_{h:\iii h \in \Sigma_{A,*}}G_{h,|\iii|,\delta}^{(\ell)}\text{ and }\jjj|_{p-|\iii|+1}^{p-|\iii|+n_\ell}=(\iii\jjj)|_1^{n_\ell}\\
&\phantom{==================}\text{ for some }p\in\psi^{-1}(n_\ell)\bigg\}
\end{align*}}
and
\begin{align*}
C_{\iii,\delta}^{(\ell)}:=\bigg\{\jjj\in\Sigma_{A,c(|\iii|,\delta)-|\iii|}:&\jjj|_1^{n_\ell-|\iii|}\in \bigcup_{h:\iii h \in \Sigma_{A,*}}G_{h,|\iii|,\delta}^{(\ell)}\text{ and }\jjj|_{p-|\iii|+1}^{p-|\iii|+n_\ell}\neq(\iii\jjj)|_1^{n_\ell}\\
&\phantom{==================}\text{ for every }p\in\psi^{-1}(n_\ell)\bigg\}.
\end{align*}
We also let $$B_{\iii,\delta}:=\bigcup_{\ell:n_\ell\in[a(|\iii|,\delta),b(|\iii|,\delta)]}B_{\iii,\delta}^{(\ell)}.$$ \label{B definition}
An important consequence of the definition of $B_{\iii,\delta}^{(\ell)}$ is that if $\jjj'\in \iii B_{\iii,\delta}^{(\ell)}$ then
\begin{equation}
\label{Recurrence consequence}
d(\sigma^{p}\jjj',\jjj')\leq K^{-n_\ell}
\end{equation} for some $p\in \psi^{-1}(n_\ell)$.\\

\noindent \textbf{Part 2. Measure properties of our auxiliary sets.}

\begin{lemma}\label{lem:Ci} For every $\iii\in\Sigma_{A,*}$, $\delta>0,$ and $\ell \in \N$ such that $n_\ell\in [a(|\iii|,\delta),b(|\iii|,\delta)],$ we have
	\[
	\mu([\iii C_{\iii,\delta}^{(\ell)}])\leq C\delta \sum_{h:\iii h \in \Sigma_{A,*}}\mu([\iii G_{h,|\iii|,\delta}^{(\ell)}]).
	\]
\end{lemma}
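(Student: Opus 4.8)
The plan is to reduce the bound to a conditional recurrence estimate for a single ``large'' cylinder $[\mathbf{w}]$ of length $n_\ell$ --- that, conditioned on a sequence beginning with such a $\mathbf{w}$, the word $\mathbf{w}$ recurs with conditional probability at least $1-C\delta$ at some time in $\psi^{-1}(n_\ell)$ --- and then to prove that estimate by thinning $\psi^{-1}(n_\ell)$ to a well-separated set of return times and iterating exponential decay of correlations. Write $m=|\iii|$. Unwinding the definitions, and using \eqref{eq:psiinclusion} so that membership of a sequence in $[\iii C_{\iii,\delta}^{(\ell)}]$ depends only on its first $c(m,\delta)$ coordinates, the set $\mathcal{W}:=\bigcup_{h:\iii h\in\Sigma_{A,*}}\iii G_{h,m,\delta}^{(\ell)}$ consists of pairwise distinct words of length $n_\ell$, one has $\sum_{h:\iii h\in\Sigma_{A,*}}\mu([\iii G_{h,m,\delta}^{(\ell)}])=\sum_{\mathbf{w}\in\mathcal{W}}\mu([\mathbf{w}])$, and
\[
[\iii C_{\iii,\delta}^{(\ell)}]=\bigsqcup_{\mathbf{w}\in\mathcal{W}}\Bigl([\mathbf{w}]\setminus\bigcup_{p\in\psi^{-1}(n_\ell)}\sigma^{-p}[\mathbf{w}]\Bigr).
\]
Hence it suffices to prove $\mu\bigl([\mathbf{w}]\setminus\bigcup_{p\in\psi^{-1}(n_\ell)}\sigma^{-p}[\mathbf{w}]\bigr)\leq C\delta\,\mu([\mathbf{w}])$ for each $\mathbf{w}\in\mathcal{W}$ and sum over $\mathbf{w}$. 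We also record that, because $\jjj|_1^{n_\ell-m}\in G_{h,m,\delta}^{(\ell)}$ satisfies the defining inequality of \eqref{eq:defG}, Lemma~\ref{lem:bound3} gives the crucial lower bound $\mu([\mathbf{w}])\geq e^{-h_\mu n_\ell+(1+\varepsilon/4)\sqrt{2\rho_\mu n_\ell}\,g(n_\ell)}$.

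Fix $\mathbf{w}\in\mathcal{W}$ and put $s:=\bigl(1+\lceil -h_\mu/\log\gamma\rceil\bigr)n_\ell$, with $D,\gamma$ as in Theorem~\ref{thm:decay}; by choice of the ceiling $\gamma^{\,s-n_\ell}\leq e^{-h_\mu n_\ell}$. From $\psi^{-1}(n_\ell)\subset[n_\ell,c(m,\delta)-n_\ell]$ discard the (at most $s$) elements below $s$ and greedily extract a maximal subset $\{p_1<\dots<p_r\}$ of the remainder with consecutive gaps $\geq s$; a standard covering bound gives $r\geq\bigl(\#\psi^{-1}(n_\ell)-s\bigr)/(2s)$. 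For $k\geq1$ set $B_k:=[\mathbf{w}]\setminus\bigcup_{j<k}\sigma^{-p_j}[\mathbf{w}]$, a union of cylinders of length at most $p_{k-1}+n_\ell$ (of length $n_\ell$ when $k=1$). Applying Theorem~\ref{thm:decay} to each such cylinder with shift $p_k$ --- which is legitimate since $p_k-p_{k-1}\geq s\geq n_\ell$ (and $p_1\geq s\geq n_\ell$), the error being at most $D\gamma^{\,s-n_\ell}\leq De^{-h_\mu n_\ell}$ --- and summing over the cylinders making up $B_k$ yields $\mu(B_k\cap\sigma^{-p_k}[\mathbf{w}])\geq(1-De^{-h_\mu n_\ell})\mu(B_k)\mu([\mathbf{w}])$, so that $\mu(B_{k+1})\leq\bigl(1-(1-De^{-h_\mu n_\ell})\mu([\mathbf{w}])\bigr)\mu(B_k)$. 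Iterating and using $1-x\leq e^{-x}$ gives
\[
\mu\Bigl([\mathbf{w}]\setminus\bigcup_{p\in\psi^{-1}(n_\ell)}\sigma^{-p}[\mathbf{w}]\Bigr)\leq\mu(B_{r+1})\leq\mu([\mathbf{w}])\,\exp\bigl(-r(1-De^{-h_\mu n_\ell})\mu([\mathbf{w}])\bigr).
\]

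To close the argument one must check that the exponent $r(1-De^{-h_\mu n_\ell})\mu([\mathbf{w}])$ is at least $\log(1/(C\delta))$, so the right-hand side is $\leq C\delta\,\mu([\mathbf{w}])$; summing over $\mathbf{w}\in\mathcal{W}$ then finishes the proof. By \eqref{eq:bound4} (valid since $n_\ell\geq a(m,\delta)>a_1(\delta)$) one has $\#\psi^{-1}(n_\ell)\geq\log(1/\delta)\,e^{h_\mu n_\ell-\sqrt{2\rho_\mu n_\ell}g(n_\ell)}$; feeding this together with the lower bound $\mu([\mathbf{w}])\geq e^{-h_\mu n_\ell+(1+\varepsilon/4)\sqrt{2\rho_\mu n_\ell}g(n_\ell)}$, the inequality $r\geq(\#\psi^{-1}(n_\ell)-s)/(2s)$, and $1-De^{-h_\mu n_\ell}\geq\tfrac12$ (for $n_\ell$ large) into the exponent, the $e^{\pm h_\mu n_\ell}$ and $e^{\pm\sqrt{2\rho_\mu n_\ell}g(n_\ell)}$ factors cancel and one is left with a fixed multiple of $\log(1/\delta)\,e^{\varepsilon\sqrt{2\rho_\mu n_\ell}g(n_\ell)/4}/s$; since \eqref{eq:bound5} forces $e^{\varepsilon\sqrt{2\rho_\mu n_\ell}g(n_\ell)/4}\geq(D+1)(1+s)$, this is $\geq\log(1/\delta)\geq\log(1/(C\delta))$ once $n_\ell$ is large, and by enlarging $a(m,\delta)$ we may assume these ``$n_\ell$ large'' provisos (and the growth assumptions on $g$ ensuring $\#\psi^{-1}(n_\ell)>s$) hold throughout $[a(m,\delta),b(m,\delta)]$. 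I expect this final bookkeeping to be the main obstacle: it must simultaneously exploit the (essentially super-polynomial) size of $\#\psi^{-1}(n_\ell)$ coming from the divergence hypothesis \eqref{eq:cond2}, the boosted cylinder mass $\mu([\mathbf{w}])\geq e^{-h_\mu n_\ell+(1+\varepsilon/4)\sqrt{2\rho_\mu n_\ell}g(n_\ell)}$ coming from the almost-sure lower bound \eqref{eq:cond1b} (via Lemma~\ref{lem:bound3}), and the trade-off --- calibrated precisely by \eqref{eq:bound5} --- whereby separating return times by the $\Theta(n_\ell)$ amount $s$ costs only the polynomial factor $s$ in the count while driving the decay-of-correlations errors below $e^{-h_\mu n_\ell}$. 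A secondary care point is justifying the initial set-theoretic decomposition of $[\iii C_{\iii,\delta}^{(\ell)}]$, which rests on \eqref{eq:psiinclusion}.
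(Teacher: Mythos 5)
Your proof follows essentially the same route as the paper: decompose $[\iii C^{(\ell)}_{\iii,\delta}]$ into the cylinders $[\mathbf{w}]$ with $\mathbf{w}\in\mathcal{W}$, replace $\psi^{-1}(n_\ell)$ by a $\Theta(n_\ell)$-separated subsequence of return times, iterate Theorem~\ref{thm:decay} to get an exponential-in-$r\mu([\mathbf{w}])$ bound, and then feed in the mass lower bound from Lemma~\ref{lem:bound3} together with \eqref{eq:bound4} and \eqref{eq:bound5}. In fact your explicit induction on $B_k$ and the step of discarding return times below $s$ is slightly more careful than the paper's compressed display, which quotes the per-step error $D\gamma^{r_\ell}$ even for the first return time $p_1$, where the relevant gap is only $p_1-n_\ell$.

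The one place you should tighten is the final bookkeeping. Your count $r\geq(\#\psi^{-1}(n_\ell)-s)/(2s)$ needs $\#\psi^{-1}(n_\ell)>s$, which you attribute to ``growth assumptions on $g$'' that do not appear among the hypotheses of Theorem~\ref{thm:meas1}; ``enlarging $a(m,\delta)$'' alone does not settle it, since \eqref{eq:cond1} constrains $g(n_k)$ only along a subsequence. The fact you need is already hidden in Lemma~\ref{lem:bound3}: whenever some $G^{(\ell)}_{h,|\iii|,\delta}$ is nonempty (if all are empty both sides of the lemma vanish), combining $\mu([\mathbf{w}])\leq1$ with the conclusion of Lemma~\ref{lem:bound3} forces $g(n_\ell)\leq \tfrac{h_\mu}{(1+\varepsilon/4)\sqrt{2\rho_\mu}}\sqrt{n_\ell}$, and then \eqref{eq:bound4} yields $\#\psi^{-1}(n_\ell)\geq\log(1/\delta)\,e^{h_\mu n_\ell\varepsilon/(4+\varepsilon)}$, exponentially larger than $s=\Theta(n_\ell)$. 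The paper sidesteps this by never subtracting $s$: it works with the raw bound $\#S\geq\#\psi^{-1}(n_\ell)/(n_\ell+r_\ell+1)$ and lets the cancellation built into \eqref{eq:bound5}, together with Lemma~\ref{lem:bound3}, absorb the decay-of-correlations error in a single algebraic step, with no separate ``$n_\ell$ large'' or ``$\#\psi^{-1}(n_\ell)>s$'' proviso.
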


\begin{proof}
	Let $r_\ell= \lceil \frac{-h_\mu}{\log\gamma}\rceil \cdot  n_\ell$. Let $S$ denote the $(n_\ell+r_\ell)$-separated subset of $\psi^{-1}(n_\ell)$ defined inductively as follows. Let $S_1=\{\min\psi^{-1}(n_\ell)\}$ and for $k\geq 2$ let \[S_k=S_{k-1}\cup\left\{\min\psi^{-1}(n_\ell)\cap[\max S_{k-1}+n_\ell+r_\ell+1,\infty)\right\}.\] We then let $S=\bigcup_{k}S_k$. Clearly,
	\begin{equation}\label{eq:boundS}
	\frac{\#\psi^{-1}(n_\ell)}{n_\ell+r_\ell+1}\leq\#S\leq\#\psi^{-1}(n_\ell).
	\end{equation}
	By \eqref{eq:psiinclusion} we know that $S\subset[n_\ell,c(\iii,\delta)-n_\ell]$. It follows now from the definition of $C_{\iii, \delta}^{(\ell)}$ and \eqref{eq:disjoint} that
\[
    \begin{split}
	&\mu([\iii C_{\iii,\delta}^{(\ell)}])\\
	\leq &\mu\left(\left\{\jjj\in\Sigma_{A}:\jjj|_1^{|\iii|}=\iii,\ \jjj|_{|\iii|+1}^{n_\ell}\in \bigcup_{h:\iii h \in \Sigma_{A,*}}G_{h,|\iii|,\delta}^{(\ell)}\text{ and }\jjj|_{p+1}^{p+n_\ell}\neq\jjj|_1^{n_\ell}\text{ for every }p\in S\right\}\right)\\
=&\sum_{h:\iii h \in \Sigma_{A,*}}\sum_{\jjj\in G_{h,|\iii|,\delta}^{(\ell)}}\mu\left([\iii \jjj]\cap\left(\bigcap_{p\in S}\sigma^{-p}[\iii \jjj]^c\right)\right).
\end{split}
\]
	Combining the above with Theorem~\ref{thm:decay} and \eqref{eq:quasiBernoulli}, we have 
	$$
\mu([\iii C_{\iii,\delta}^{(\ell)}])\leq C\sum_{h:\iii h \in \Sigma_{A,*}}\sum_{\jjj\in G_{h,|\iii|,\delta}^{(\ell)}}\mu([\iii \jjj])\left(1+D\gamma^{r_\ell}\right)^{\#S}\left(1-\mu([\iii\jjj])\right)^{\#S}.
	$$
Now using the fact that $1+x\leq e^x$ for all $x\in \mathbb{R}$ together with \eqref{eq:boundS}, it follows that
\begin{align*}
\mu([\iii C_{\iii,\delta}^{(\ell)}])
     &\leq C\sum_{h:\iii h \in \Sigma_{A,*}}\sum_{\jjj\in G_{h,|\iii|,\delta}^{(\ell)}}{\mu([\iii\jjj])\exp\left(\#S D\gamma^{r_\ell}\right)\exp\left(-\#S\mu([\iii\jjj])\right)} \\
     &\leq C\sum_{h:\iii h \in \Sigma_{A,*}}\sum_{\jjj\in G_{h,|\iii|,\delta}^{(\ell)}}{\mu([\iii\jjj])\exp\left(D\gamma^{r_\ell} \#\psi^{-1}(n_{\ell})-\mu([\iii\jjj]) \frac{\#\psi^{-1}(n_{\ell})}{n_{\ell}+r_{\ell}+1}\right).}
\end{align*}
By definition, see \eqref{eq:defG}, any word $\jjj \in G_{h,|\iii|,\delta}^{(\ell)}$ has $|\jjj| = n_{\ell}-|\iii|$. Furthermore, it also follows directly from the definition of $G_{h,|\iii|,\delta}^{(\ell)}$ that the hypothesis of Lemma \ref{lem:bound3} holds, and so we have
\[\mu([\iii\jjj])\geq\exp\left(-h_{\mu}|\iii\jjj| + \sqrt{2\rho_{\mu}|\iii\jjj|}g(|\iii\jjj|)(1+\varepsilon/4)\right).\] 
Putting this all together, we conclude that
\begin{align*}
&\mu([\iii C_{\iii,\delta}^{(\ell)}])\\
     \leq &C\sum_{h:\iii h \in \Sigma_{A,*}}\sum_{\jjj\in G_{h,|\iii|,\delta}^{(\ell)}}{\mu([\iii\jjj])\exp\left(D\gamma^{r_\ell} \#\psi^{-1}(n_{\ell})- \frac{\#\psi^{-1}(n_{\ell})}{n_{\ell}+r_{\ell}+1} e^{-h_{\mu}|\iii\jjj| + \sqrt{2\rho_{\mu}|\iii\jjj|}g(|\iii\jjj|)(1+\varepsilon/4)}\right)} \\
     \leq &C\sum_{h:\iii h \in \Sigma_{A,*}}{\mu([\iii G_{h,|\iii|,\delta}^{(\ell)}])\exp\left(D\gamma^{r_\ell} \#\psi^{-1}(n_{\ell})- \frac{\#\psi^{-1}(n_{\ell})}{n_{\ell}+r_{\ell}+1}  e^{-h_{\mu}n_{\ell} + \sqrt{2\rho_{\mu}n_{\ell}}g(n_{\ell})(1+\varepsilon/4)}\right)}.
\end{align*}
By equation \eqref{eq:bound5} and the definition of $r_\ell$ we have
	$$
	e^{-h_{\mu}n_\ell+\sqrt{2\rho_\mu n_\ell}g(n_\ell)}\left(\frac{e^{\varepsilon\sqrt{2\rho_\mu n_\ell}g(n_\ell)/4}}{n_\ell+r_\ell+1}-1\right)\geq D\gamma^{r_\ell}.
	$$
	Thus, it follows that
	\begin{align*}
	\mu([\iii C_{\iii,\delta}^{(\ell)}])&\leq C\sum_{h:\iii h \in \Sigma_{A,*}}{\mu([\iii G_{h,|\iii|,\delta}^{(\ell)}])\exp\left(-\#\psi^{-1}(n_{\ell})e^{-h_{\mu}n_{\ell}+\sqrt{2\rho_{\mu}n_{\ell}}g(n_{\ell})}\right)}.
	\end{align*}
	Finally, from the definition of $a(|\iii|,\delta)$ and \eqref{eq:bound4}, we see that
	\begin{align*}
	\mu([\iii C_{\iii,\delta}^{(\ell)}])&\leq C\delta\sum_{h:\iii h \in \Sigma_{A,*}}\mu([\iii G_{h,|\iii|,\delta}^{(\ell)}]),
	\end{align*}
which completes our proof.
\end{proof} 

\noindent \textbf{Part 3. Proof of \eqref{eq:density}.}

Let us fix now an arbitrary $\iii\in\Sigma_{A,*}$ and set about proving \eqref{eq:density}. Recall that we defined
$$S_{\psi,(n_k)}:=\left\{\iii\in \Sigma_{A}:d(\sigma^{p}\iii,\iii)\leq K^{-n_k}\textrm{ for some }p\in \psi^{-1}(n_k)\textrm{ for infinitely many }k\in\mathbb{N}\right\}.$$
We now define a subset of $S_{\psi,(n_k)}$ by induction as follows. First, let $k_1$ be such that $D\gamma^{k_1}<1$ and let
$$
D_1(\iii):=\bigcup_{\substack{\kkk\in\Sigma_{A,1+k_1} \\ \iii\kkk\in \Sigma_{A,*}} }[\iii\kkk B_{\iii\kkk,2^{-1}/C}].
$$ By the definition of $D_{1}(\iii)$, and noting that each $B_{\iii\kkk,2^{-1}/C}$ is a finite union of cylinders, we may choose a finite set of words $W_{1}(\iii)\subset \Sigma_{A,*}$ such that $$D_{1}(\iii)=\bigcup_{\jjj\in W_{1}(\iii)}[\jjj]\text{ and } [\jjj_{1}]\cap [\jjj_{2}]=\emptyset \text{ for }\jjj_1\neq \jjj_2.$$ Now suppose $D_{k}(\iii)\subset \Sigma_{A}$ and $W_{k}(\iii)\subset \Sigma_{A,*}$ are defined for some $k\geq 1$. We then define $$D_{k+1}(\iii)=\bigcup_{\jjj\in W_{k}(\iii)}\bigcup_{\substack{\kkk \in \Sigma_{A,k+1+k_1} \\ \jjj\kkk\in \Sigma_{A,*}}}[\jjj\kkk B_{\jjj\kkk,2^{-k-1}/C}].$$ 
We also let $W_{k+1}(\iii) \subset \Sigma_{A,*}$ be a finite set of words such that $$D_{k+1}(\iii)=\bigcup_{\jjj\in W_{k+1}(\iii)}[\jjj]\text{ and } [\jjj_{1}]\cap [\jjj_{2}]=\emptyset \text{ for }\jjj_1\neq \jjj_2.$$ Proceeding inductively we define $D_{k}(\iii)$ and $W_{k}(\iii)$ for all $k \in \N$. We observe that
\begin{equation}
\label{eq:inclusion}
D_{k+1}(\iii)\subset D_{k}(\iii)
\end{equation}
for all $k\geq 1,$ and
\begin{equation}
\label{eq:intersectioninclusion}
\bigcap_{k=1}^{\infty}D_{k}(\iii)\subset [\iii]\cap S_{\psi,(n_k)}.
\end{equation} This last inclusion follows from \eqref{Recurrence consequence}. Therefore to prove \eqref{eq:density} it is sufficient to obtain lower bounds for the measure of $\cap_{k=1}^{\infty}D_{k}(\iii).$ The following lemma allows us to do this and bounds how much measure is lost as we pass from $D_{k}(\iii)$ to $D_{k+1}(\iii)$.

\begin{lemma}\label{lem:boundD}
	For every $k\geq2$ we have
	$$
	\mu(D_k(\iii))\geq\mu(D_{k-1}(\iii))(1-D\gamma^{k+k_1})(1-2^{-k})(1-2^{-k}/C),
	$$
	and in the case when $k=1$ we have $$\mu(D_1(\iii))\geq \mu([\iii])(1-D\gamma^{k_1})(1-2^{-1})(1-2^{-1}/C).$$
\end{lemma}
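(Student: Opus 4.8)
The plan is to reduce the statement to a single per-cylinder estimate and then sum over the cylinders making up $D_{k-1}(\iii)$. Fix $\iii\in\Sigma_{A,*}$ and $k\geq2$; the case $k=1$ is handled identically, with $[\iii]$ playing the role of the cylinders $[\jjj]$ below and $\Sigma_{A,1+k_1}$ the role of $\Sigma_{A,k+k_1}$, and since $\gamma^{1+k_1}\leq\gamma^{k_1}$ the bound so obtained is at least as strong as the one stated. Write $\delta:=2^{-k}/C$. The cylinders $\{[\jjj]:\jjj\in W_{k-1}(\iii)\}$ partition $D_{k-1}(\iii)$, and for each such $\jjj$ the cylinders $\{[\jjj\kkk B_{\jjj\kkk,\delta}]:\kkk\in\Sigma_{A,k+k_1},\ \jjj\kkk\in\Sigma_{A,*}\}$ are pairwise disjoint subsets of $[\jjj]$ whose union is $D_k(\iii)\cap[\jjj]$; hence $\mu(D_k(\iii))=\sum_{\jjj\in W_{k-1}(\iii)}\mu(D_k(\iii)\cap[\jjj])$ and it is enough to prove
\[
\mu\Big(\bigcup_{\kkk}[\jjj\kkk B_{\jjj\kkk,\delta}]\Big)\ \geq\ (1-D\gamma^{k+k_1})(1-2^{-k})(1-2^{-k}/C)\,\mu([\jjj])
\]
for every $\jjj\in W_{k-1}(\iii)$, since summing over $\jjj$ then gives the lemma.

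To prove the per-cylinder bound, fix $\jjj\in W_{k-1}(\iii)$ and set $m:=|\jjj|+k+k_1$, so $m=|\jjj\kkk|$ for every admissible $\kkk\in\Sigma_{A,k+k_1}$. Introduce the ``good prefix'' event $\mathcal{G}:=\sigma^{-m}\big(\bigcup_{h=1}^{K}G_{h,m,\delta}\big)$, which depends only on coordinates in positions strictly greater than $m$ and so is the same for every choice of $\kkk$. Writing each $G_{h,m,\delta}$ as the disjoint union of the cylinders $[G^{(\ell)}_{h,m,\delta}]$ of \eqref{eq:defG} (genuine cylinders in positions $>m$, because $a(m,\delta)>m$ forces $n_\ell-m\geq1$), applying Theorem~\ref{thm:decay} to $[\jjj]$ with shift $m\geq|\jjj|$ to each piece, and summing, one obtains
\[
\mu\big([\jjj]\cap\mathcal{G}\big)\ \geq\ \big(1-D\gamma^{\,m-|\jjj|}\big)\,\mu([\jjj])\,\mu\Big(\bigcup_h G_{h,m,\delta}\Big)\ \geq\ (1-D\gamma^{k+k_1})(1-\delta)\,\mu([\jjj]),
\]
where the last inequality uses $\mu\big(\bigcup_h G_{h,m,\delta}\big)=\sum_h\mu(G_{h,m,\delta})\geq(1-\delta)\sum_h\mu([h])=1-\delta$, which follows from \eqref{eq:boundB} since $\{[h]\}_{h=1}^{K}$ partitions $\Sigma_A$.

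Next fix an admissible $\kkk$ and put $\iii':=\jjj\kkk$, so $|\iii'|=m$. By the ``first time'' structure of \eqref{eq:defG}, every point of $[\iii']\cap\mathcal{G}$ has a well-defined first scale $\ell_0$ at which the tail beyond $\iii'$ enters $\bigcup_h G^{(\ell_0)}_{h,m,\delta}$, and at that scale the recurrence condition of $B$ either holds for some admissible $p\in\psi^{-1}(n_{\ell_0})$ (so the point lies in $[\iii' B^{(\ell_0)}_{\iii',\delta}]$) or fails for every such $p$ (so it lies in $[\iii' C^{(\ell_0)}_{\iii',\delta}]$). Thus, up to a null set, $[\iii']\cap\mathcal{G}$ is the disjoint union of $[\iii' B_{\iii',\delta}]$ and $\bigcup_{\ell:n_\ell\in[a(m,\delta),b(m,\delta)]}[\iii' C^{(\ell)}_{\iii',\delta}]$, so that $\mu([\iii' B_{\iii',\delta}])=\mu([\iii']\cap\mathcal{G})-\sum_\ell\mu([\iii' C^{(\ell)}_{\iii',\delta}])$. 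Summing the bound of Lemma~\ref{lem:Ci} over $\ell$ and noting that $\sum_\ell\sum_h\mu([\iii' G^{(\ell)}_{h,m,\delta}])=\mu([\iii']\cap\mathcal{G})$, one gets $\sum_\ell\mu([\iii' C^{(\ell)}_{\iii',\delta}])\leq C\delta\,\mu([\iii']\cap\mathcal{G})$, hence $\mu([\iii' B_{\iii',\delta}])\geq(1-C\delta)\mu([\iii']\cap\mathcal{G})=(1-2^{-k})\mu([\iii']\cap\mathcal{G})$. Since the sets $[\jjj\kkk]\cap\mathcal{G}$ are pairwise disjoint with union $[\jjj]\cap\mathcal{G}$, summing over $\kkk$ and then inserting the estimate of the previous paragraph yields the displayed per-cylinder bound, and summing it over $\jjj\in W_{k-1}(\iii)$ finishes the proof.

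The step I expect to be the main obstacle is the bookkeeping in the previous paragraph: checking precisely that $[\iii']\cap\mathcal{G}$ splits as the disjoint union $[\iii' B_{\iii',\delta}]\sqcup\bigsqcup_\ell[\iii' C^{(\ell)}_{\iii',\delta}]$, and keeping careful track of which of the three factors comes from where. The factor $1-D\gamma^{k+k_1}$ is the decay-of-correlations error and is precisely why the gap word $\kkk$ of length $k+k_1$ is inserted into the definition of $D_k$; the factor $1-2^{-k}$ is the output of Lemma~\ref{lem:Ci} (recurrence failing despite favourable statistics); and $1-2^{-k}/C=1-\delta$ is the loss in \eqref{eq:boundB} from the tail beyond $\iii'$ missing every $G_{h,m,\delta}$. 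One must also be sure that the cylinders composing $\bigcup_h G_{h,m,\delta}$ genuinely begin beyond position $m$ — which is ensured by $a(m,\delta)>m$ — so that Theorem~\ref{thm:decay} supplies a genuine contraction $\gamma^{k+k_1}<1$ rather than the trivial $\gamma^0=1$.
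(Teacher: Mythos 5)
Your argument is correct and is essentially the paper's own proof, reorganized: you fold the decay-of-correlations estimate and \eqref{eq:boundB} into a single per-$\jjj$ bound via the auxiliary event $\mathcal{G}=\sigma^{-m}\bigcup_h G_{h,m,\delta}$, then apply Lemma~\ref{lem:Ci} inside each $[\jjj\kkk]\cap\mathcal{G}$, whereas the paper applies Lemma~\ref{lem:Ci} first to the quadruple sum and then uses Theorem~\ref{thm:decay} and \eqref{eq:boundB} after reindexing. The same three ingredients appear, produce the same three factors, and the decomposition $[\iii']\cap\mathcal{G}=[\iii'B_{\iii',\delta}]\sqcup\bigsqcup_\ell[\iii'C^{(\ell)}_{\iii',\delta}]$ that you flag as the delicate step is exactly the identity the paper also relies on (in the form $\mu([\jjj\kkk B^{(\ell)}])=\sum_h\mu([\jjj\kkk G^{(\ell)}_{h,\cdot}])-\mu([\jjj\kkk C^{(\ell)}])$).
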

\begin{proof}
\vbox{Let $k\geq 2$. By the definition of $D_k(\iii)$, and recalling the definitions of $B_{\iii,\delta}$, $B_{\iii,\delta}^{(\ell)}$, and $C_{\iii,\delta}^{(\ell)}$ as well as noting that for different values of $\ell$ the sets $G_{h,|\jjj|+k+k_1,2^{-k}/C}^{(\ell)}$ are disjoint by construction, we have
\begin{align*}
&\mu(D_k(\iii)) \\
&\phantom{==}=\sum_{\jjj\in W_{k-1}(\iii)}\sum_{\stackrel{\kkk\in\Sigma_{A,k+k_1}}{\jjj\kkk\in\Sigma_{A,*}}}\mu([\jjj\kkk B_{\jjj\kkk,2^{-k}/C}])\\
&\phantom{==}=\sum_{\jjj\in W_{k-1}(\iii)}\sum_{\stackrel{\kkk\in\Sigma_{A,k+k_1}}{\jjj\kkk\in\Sigma_{A,*}}}\sum_{\ell:n_\ell\in[a(|\jjj|+k+k_1,2^{-k}/C),b(|\jjj|+k+k_1, 2^{-k}/C)]}\mu([\jjj\kkk B_{\jjj\kkk,2^{-k}/C}^{(\ell)}])\\
&\phantom{==}=\sum_{\jjj\in W_{k-1}(\iii)}\sum_{\stackrel{\kkk\in\Sigma_{A,k+k_1}}{\jjj\kkk\in \Sigma_{A,*}}}\sum_{\ell:n_\ell\in[a(|\jjj|+k+k_1,2^{-k}/C),b(|\jjj|+k+k_1, 2^{-k}/C)]}\sum_{h:\jjj\kkk h\in \Sigma_{A,*}}\mu([\jjj\kkk G_{h,|\jjj|+k+k_1,2^{-k}/C}^{(\ell)}])\\
&\phantom{============}-\sum_{\jjj\in W_{k-1}(\iii)}\sum_{\stackrel{\kkk\in\Sigma_{A,k+k_1}}{\jjj\kkk\in \Sigma_{A,*}}}\sum_{\ell:n_\ell\in[a(|\jjj|+k+k_1,2^{-k}/C),b(|\jjj|+k+k_1, 2^{-k}/C)]}\mu([\jjj\kkk C_{\jjj\kkk,2^{-k}/C}^{(\ell)}]).
\end{align*}} 
Now using Lemma~\ref{lem:Ci} in the above we see that
\begin{align*}
&\mu(D_k(\iii))\\
&\geq \sum_{\jjj\in W_{k-1}(\iii)}\sum_{\stackrel{\kkk\in\Sigma_{A,k+k_1}}{\jjj\kkk\in \Sigma_{A,*}}}\sum_{\ell:n_\ell\in[a(|\jjj|+k+k_1,2^{-k}/C),b(|\jjj|+k+k_1, 2^{-k}/C)]}\sum_{h:\jjj\kkk h\in \Sigma_{A,*}}\mu([\jjj\kkk G_{h,|\jjj|+k+k_1,2^{-k}/C}^{(\ell)}])(1-2^{-k})\\
&=\sum_{\jjj\in W_{k-1}(\iii)}\sum_{\ell:n_\ell\in[a(|\jjj|+k+k_1,2^{-k}/C),b(|\jjj|+k+k_1, 2^{-k}/C)]}\sum_{h\in\{1,\ldots,K\}}\sum_{\stackrel{\kkk\in\Sigma_{A,k+k_1}}{\jjj\kkk h\in \Sigma_{A,*}}}\mu([\jjj\kkk G_{h,|\jjj|+k+k_1,2^{-k}/C}^{(\ell)}])(1-2^{-k}) \\
&=\sum_{\jjj\in W_{k-1}(\iii)}\sum_{\ell:n_\ell\in[a(|\jjj|+k+k_1,2^{-K}/C),b(|\jjj|+k+k_1, 2^{-K}/C)]}\sum_{h\in\{1,\ldots,K\}}\mu([\jjj]\cap \sigma^{-|\jjj|-k-k_1}[G_{h,|\jjj|+k+k_1,2^{-k}/C}^{(\ell)}])(1-2^{-k}).
\end{align*}
Next, by Theorem~\ref{thm:decay} we have
\begin{align*}
&\mu(D_{k}(\iii)) \\
&\geq \sum_{\jjj\in W_{k-1}(\iii)}\sum_{\ell: n_\ell\in[a(|\jjj|+k+k_1,2^{-k}/C),b(|\jjj|+k+k_1, 2^{-k}/C)]}\sum_{h\in\{1,\ldots,K\}}\mu([\jjj])\mu([G_{h,|\jjj|+k+k_1,2^{-k}/C}^{(\ell)}])(1-D\gamma^{k+k_1})(1-2^{-k})\\
&=\sum_{\jjj\in W_{k-1}(\iii)}\sum_{h\in\{1,\ldots,K\}}\mu([\jjj])\mu(G_{h,|\jjj|+k+k_1,2^{-k}/C})(1-D\gamma^{k+k_1})(1-2^{-k}).
\end{align*}
Now using \eqref{eq:boundB} in the above we see that
\begin{align*}
\mu(D_k(\iii))&\geq \sum_{\jjj\in W_{k-1}(\iii)}\sum_{h\in\{1,\ldots,K\}}\mu([\jjj])\mu([h])(1-D\gamma^{k+k_1})(1-2^{-k})(1-2^{-k}/C)\\
&=\sum_{\jjj\in W_{k-1}(\iii)}\mu([\jjj])(1-D\gamma^{k+k_1})(1-2^{-k})(1-2^{-k}/C)\\
&=\mu(D_{k-1}(\iii))(1-D\gamma^{k+k_1})(1-2^{-k})(1-2^{-k}/C).
\end{align*} The proof of the second claim is similar.
\end{proof}
It now follows from \eqref{eq:inclusion}, \eqref{eq:intersectioninclusion}, and Lemma \ref{lem:boundD} that 
\begin{align*}
\mu([i]\cap S_{\psi,(n_k)})&\geq \mu\left(\bigcap_{k=1}^{\infty}D_{k}(\iii)\right)\\
&=\lim_{k\to\infty}\mu(D_{k}(\iii))\\
&\geq \mu([\iii])\prod_{k=k_1}^\infty(1-D\gamma^k)\prod_{k=1}^\infty(1-2^{-k})\prod_{k=1}^\infty(1-2^{-k}/C).
\end{align*}Since the series $\sum_{k=k_1}^\infty D\gamma^k$ and $\sum_{k=1}^\infty2^{-k}$ are convergent we can take $$c=\prod_{k=k_1}^\infty(1-D\gamma^k)\prod_{k=1}^\infty(1-2^{-k})\prod_{k=1}^\infty(1-2^{-k}/C)$$ and \eqref{eq:density} holds (recall that $0 < \gamma < 1$ and that $k_1 \in \N$ was chosen such that $D\gamma^{k_1}<1$). This completes our proof.

\section{Proof of Theorem \ref{thm:nocharac}}
\label{Sec:Counterexample}
We begin our proof of Theorem \ref{thm:nocharac} by showing that for every function $g\colon\N\mapsto[1,\infty)$ satisfying $\limsup\limits_{n\to\infty}\frac{g(n)}{\sqrt{\log\log{n}}}\leq 1$ and $\lim\limits_{n\to\infty}g(n)=\infty,$ there exists a sequence $(n_k)_{k=1}^{\infty}$ such that for a typical $\iii\in \Sigma$ the quantity $\log\mu([\iii|_1^{n_k}])+h_\mu n_k$ eventually satisfies a useful upper bound formulated in terms of our function $g$.

\begin{lemma}\label{lem:sequence}
	Let $\mu$ be a non-uniform Bernoulli measure and let $g\colon\N\mapsto[1,\infty)$ be a function such that $\limsup\limits_{n\to\infty}\frac{g(n)}{\sqrt{\log\log{n}}}\leq 1$ and $\lim\limits_{n\to\infty}g(n)=\infty$. Then there exists a sequence of positive integers $(n_k)_{k=1}^{\infty}$ such that
	$$
	\mu\left(\left\{\iii\in\Sigma:\limsup_{k\to\infty}\frac{\log\mu([\iii|_1^{n_k}])+h_\mu n_k}{\sqrt{2\rho_\mu n_k}g(n_k)}\leq \frac{1}{2}\right\}\right)=1.
	$$
\end{lemma}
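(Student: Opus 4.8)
The plan is to translate the statement into a moderate‑deviation problem for sums of i.i.d.\ random variables and then extract a sparse subsequence along which the first Borel--Cantelli Lemma applies. Since $\mu$ is the Bernoulli measure for some probability vector $(p_i)_{i=1}^{K}$, for $\iii\in\Sigma$ and $n\in\N$ we have
$$
\log\mu([\iii|_1^n])+h_\mu n=\sum_{j=1}^{n}\bigl(\log p_{i_j}+h_\mu\bigr)=:S_n(\iii),
$$
and under $\mu$ the coordinates $i_1,i_2,\dots$ are i.i.d., so $S_n$ is a sum of $n$ i.i.d.\ centred random variables (recall $h_\mu=-\sum_i p_i\log p_i$) with $\frac1n\int S_n^2\,d\mu=\rho_\mu$ for every $n$. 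In this language the goal is to produce a strictly increasing sequence $(n_k)_{k=1}^{\infty}$ with $\limsup_{k\to\infty} S_{n_k}(\iii)/\bigl(\sqrt{2\rho_\mu n_k}\,g(n_k)\bigr)\leq\tfrac12$ for $\mu$-a.e.\ $\iii$.

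First I would fix, for each $n\in\N$, the event
$$
A_n:=\left\{\iii\in\Sigma:S_n(\iii)\geq \tfrac12\sqrt{2\rho_\mu n}\,g(n)\right\}
$$
and note that $S_n(\iii)\geq \tfrac12\sqrt{2\rho_\mu n}\,g(n)$ is equivalent to $S_n(\iii)/\sqrt{\rho_\mu}\geq a_n\sqrt n$ with $a_n:=g(n)/\sqrt2$. The hypotheses on $g$ guarantee that $a_n\to\infty$ and $a_n/\sqrt n\to0$: the former because $g(n)\to\infty$, the latter because $\limsup_{n\to\infty} g(n)/\sqrt{\log\log n}\leq1$ forces $g(n)\leq 2\sqrt{\log\log n}=o(\sqrt n)$ for all large $n$. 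Hence Theorem~\ref{thm:CLT2} applies with this choice of $(a_n)$ and gives $\mu(A_n)=e^{-a_n^2(1+\zeta_n)/2}=e^{-g(n)^2(1+\zeta_n)/4}$ for some sequence $\zeta_n\to0$; in particular $\mu(A_n)\to0$ as $n\to\infty$, since $g(n)^2(1+\zeta_n)\to\infty$.

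With this in hand, using $\mu(A_n)\to0$ I would choose inductively a strictly increasing sequence $(n_k)_{k=1}^{\infty}$ with $\mu(A_{n_k})\leq 2^{-k}$ for every $k$, so that $\sum_{k=1}^{\infty}\mu(A_{n_k})<\infty$. Lemma~\ref{lem:BC} then yields $\mu\bigl(\{\iii\in\Sigma:\iii\in A_{n_k}\text{ for infinitely many }k\}\bigr)=0$, so for $\mu$-a.e.\ $\iii$ there is $k_0(\iii)$ with $S_{n_k}(\iii)< \tfrac12\sqrt{2\rho_\mu n_k}\,g(n_k)$ for all $k\geq k_0(\iii)$, whence $\limsup_{k\to\infty} S_{n_k}(\iii)/\bigl(\sqrt{2\rho_\mu n_k}\,g(n_k)\bigr)\leq\tfrac12$. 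Since $S_{n_k}(\iii)=\log\mu([\iii|_1^{n_k}])+h_\mu n_k$, this is exactly the assertion of the lemma.

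There is no serious obstacle here: all of the analytic weight sits in Theorem~\ref{thm:CLT2}, and the only point requiring care is the verification of its hypothesis $a_n/\sqrt n\to0$, which is where the assumption $\limsup_{n\to\infty} g(n)/\sqrt{\log\log n}\leq1$ enters (any growth bound $g(n)=o(\sqrt n)$ would in fact suffice for this lemma alone). The passage to a sparse subsequence is the essential device: since $g(n)^2$ may grow as slowly as $\log\log n$, the full series $\sum_n\mu(A_n)$ need not converge — only $\mu(A_n)\to0$ is available — so it is precisely the freedom to thin out $(n_k)$ that makes the Borel--Cantelli argument go through.
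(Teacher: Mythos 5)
Your proposal is correct and follows essentially the same route as the paper: reduce to a moderate-deviation estimate via Theorem~\ref{thm:CLT2} with $a_n=g(n)/\sqrt{2}$, then pass to a sparse subsequence along which the tail probabilities are summable and invoke the first Borel--Cantelli Lemma. The only cosmetic difference is that the paper constructs $(n_k)$ explicitly as $n_k=\max\{n\geq 1:g(n)^2<k\}+1$ (so that $g(n_k)^2\geq k$ forces the summability directly), whereas you extract the subsequence abstractly from $\mu(A_n)\to 0$; both are equally valid.
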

\begin{proof}
We define the sequence $(n_k)$ via the equation $n_k=\max\{n\geq1:g(n)^2< k\}+1$. Let the sequence $(\zeta_n)_{n=1}^{\infty}$ be as in Theorem~\ref{thm:CLT2} with $(a_n)_{n=1}^{\infty}=(\frac{\sqrt{2}g(n)}{2})_{n=1}^{\infty}$. Now applying Theorem~\ref{thm:CLT2} and using the fact that $\lim_{n\to\infty}\zeta_n=0$ we have
	\[
	\begin{split}
	\sum_{k=1}^\infty\mu\left(\left\{\iii\in\Sigma:\frac{\log\mu([\iii|_1^{n_k}])+h_\mu n_k}{\sqrt{2 \rho_\mu n_k}g(n_k)}\geq \frac{1}{2}\right\}\right)&= \sum_{k=1}^\infty e^{\frac{-g(n_k)^2(1+\zeta_{n_k})}{4}}\\
	&\ll\sum_{k=1}^\infty e^{-k/5}\\
	&<\infty.\\
	\end{split}\]
	Our claim now follows by the Borel--Cantelli Lemma (Lemma \ref{lem:BC}).
\end{proof}
Equipped with Lemma \ref{lem:sequence} we are now in a position to prove Theorem \ref{thm:nocharac}.
\begin{proof}[Proof of Theorem~\ref{thm:nocharac}]
	Let $\mathcal{G}=\{n_k:k \geq 1\}\subseteq\N$, where $(n_k)_{k=1}^{\infty}$ is the sequence defined in Lemma~\ref{lem:sequence}. Let $(a_n)_{n=1}^{\infty}$ be the sequence of natural numbers defined as follows
		$$
		a_n:=\begin{cases}
		\left\lceil e^{h_\mu n-\sqrt{2\rho_\mu n} g(n)}\right\rceil & \text{if }n\in\mathcal{G},\\[2ex]
		\left\lfloor n^{-2}e^{h_\mu n-\frac{5}{4}\sqrt{2\rho_\mu n\log\log{n}}}\right\rfloor & \text{if }n\notin\mathcal{G}.
		\end{cases}
		$$ Then let $\psi(n):=\max\{k\geq1:\sum_{m=1}^{k-1}a_m\leq n\}.$ Notice that $\#\psi^{-1}(n)=a_n$ for all $n\in\mathbb{N}$.
	
	 Thus,
\begin{align*}
\sum_{n=1}^\infty e^{-h_\mu\psi(n)+\sqrt{2\rho_\mu \psi(n)} g(\psi(n))}&=\sum_{n=1}^\infty \#\psi^{-1}(n)e^{-h_\mu n+\sqrt{2\rho_\mu n} g(n)} \\
&\geq\sum_{k=1}^\infty \#\psi^{-1}(n_k)e^{-h_\mu n_k+\sqrt{2\rho_\mu n_k} g(n_k)} \\
&\geq \sum_{k=1}^\infty1=\infty.
\end{align*}
For each $N\in\N$ we let
$$E_N^{(1)}=\left\{\iii\in\Sigma: \frac{\log\mu([\iii|_1^{n_k}])+h_\mu n_k}{\sqrt{2\rho_\mu n_k}g(n_k)}<\frac{3}{4}\text{ for every }n_k\geq N\right\}$$
and
$$E_N^{(2)}=\left\{\iii\in\Sigma: \frac{\log\mu([\iii|_1^{n}])+h_\mu n}{\sqrt{2\rho_\mu n\log\log{n}}}<\frac{5}{4}\text{ for every }n\geq N\right\}.$$
	By Theorem~\ref{thm:LIL} and Lemma~\ref{lem:sequence} we have $\mu(\bigcup_{N=1}^\infty E_N^{(1)}\cap E_N^{(2)})=1$.
	
	Let us also define
	$$
	Q(n)=\begin{cases}
		e^{-h_\mu n+\frac{3}{4}\sqrt{2\rho_\mu n} g(n)} & \text{if }n\in\mathcal{G},\\[2ex]
		e^{-h_\mu n+\frac{5}{4}\sqrt{2\rho_\mu n\log\log{n}}} & \text{if }n\notin\mathcal{G}.
	\end{cases}
	$$ It follows from the above that if $\jjj\in \cup_{n=N}^{\infty}\Sigma_n$ satisfies $[\jjj]\cap E_{N}^{(1)}\cap E_{N}^{(2)}\neq \emptyset$ then we have
	\begin{equation}
	\label{eq:Pbound}
	\mu([\jjj])\leq Q(|\jjj|).
	\end{equation}
	We will now show that $	\mu\left(E_N^{(1)}\cap E_N^{(2)}\cap R_\psi\right)=0$ for any $N\geq 1$. Since $\mu(\bigcup_{N=1}^\infty E_N^{(1)}\cap E_N^{(2)})=1$, this will complete our proof.
	
	Let us now fix $N\geq 1$. By the first Borel--Cantelli Lemma (Lemma \ref{lem:BC}), to show that $\mu\left(E_N^{(1)}\cap E_N^{(2)}\cap R_\psi\right)=0,$ it suffices to show that
	\begin{equation}
	\label{eq:WTS2}\sum_{n=1}^\infty\mu\left(\left\{\iii\in E_N^{(1)}\cap E_N^{(2)}: \sigma^n\iii\in[\iii|_1^{\psi(n)}]\right\}\right)<\infty.
	\end{equation}
	Let us start by defining $M_{N}\in\mathbb{N}$ to be sufficiently such that for all $n\geq M_{N}$ we have $\psi(n)\geq N$. We now show that \eqref{eq:WTS2} holds. We first note that it follows from \eqref{eq:simpleform} and the fact that $\mu$ is a Bernoulli measure, so \eqref{Bernoulli} holds, that
\begin{align*}
\sum_{n=1}^\infty\mu\left(\left\{\iii\in E_N^{(1)}\cap E_N^{(2)}: \sigma^n\iii\in[\iii|_1^{\psi(n)}]\right\}\right) 					
                &\leq M_{N}+\sum_{n=M_{N}}^\infty\sum_{\substack{\jjj \in \Sigma_{\psi(n)} \\ \iii \in \Sigma_{n-\psi(n)} \\ [\jjj]\cap E_N^{(1)} \cap E_N^{(2)} \neq \emptyset}}{\mu([\jjj\iii\jjj])}\\
                &= M_{N}+\sum_{n=M_{N}}^\infty\sum_{\substack{\jjj \in \Sigma_{\psi(n)} \\ \iii \in \Sigma_{n-\psi(n)} \\ [\jjj]\cap E_N^{(1)} \cap E_N^{(2)} \neq \emptyset}}{\mu([\jjj])\mu([\iii])\mu([\jjj])}\\
				&\leq M_{N}+\sum_{n=M_{N}}^\infty\sum_{\substack{\jjj\in\Sigma_{\psi(n)} \\ [\jjj]\cap E_N^{(1)}\cap E_N^{(2)}\neq\emptyset}}\mu([\jjj])^2.
\end{align*}
Next, from \eqref{eq:Pbound} and the fact that $\psi(n)\geq N$ for all $n \geq M_N$, we see that
\begin{align*}
\sum_{n=1}^\infty\mu\left(\left\{\iii\in E_N^{(1)}\cap E_N^{(2)}: \sigma^n\iii\in[\iii|_1^{\psi(n)}]\right\}\right)
		&\leq M_{N}+C\sum_{n=M_{N}}^\infty\sum_{\substack{\jjj\in\Sigma_{\psi(n)} \\ [\jjj]\cap E_N^{(1)}\cap E_N^{(2)}\neq\emptyset}}Q(\psi(n))\mu([\jjj]).
\end{align*}
Finally, recalling the definitions of $Q(n)$, $a_n$, and the fact that $\#\psi^{-1}(n)=a_n$ for all $n \in \N$, we observe that
\begin{align*}
&\sum_{n=M_{N}}^\infty\sum_{\substack{\jjj\in\Sigma_{\psi(n)} \\ [\jjj]\cap E_N^{(1)}\cap E_N^{(2)}\neq\emptyset}}Q(\psi(n))\mu([\jjj])	\\	
		&\leq \sum_{n=1}^\infty \#\psi^{-1}(n)Q(n)\\
		&\phantom{}=\sum_{n\in\mathcal{G}}\left\lceil e^{h_\mu n-\sqrt{2\rho_\mu n} g(n)}\right\rceil e^{-h_\mu n+\frac{3}{4}\sqrt{2\rho_\mu n} g(n)}+\sum_{n\notin\mathcal{G}}\left\lfloor n^{-2}e^{h_\mu n-\frac{5}{4}\sqrt{2\rho_\mu n\log\log(n)}}\right\rfloor e^{-h_\mu n+\frac{5}{4}\sqrt{2\rho_\mu n\log\log{n}}}\\
		&\ll \sum_{n=1}^\infty e^{-\frac{1}{4} \sqrt{2\rho_{\mu}n}g(n)}+\sum_{n=1}^\infty n^{-2}<\infty.
\end{align*}
Thus it follows that \eqref{eq:WTS2} holds and our proof is complete.
\end{proof}

\section{Applications to dynamics on self-similar sets}
\label{Sec:self-similar}
In this section we apply our results to the study of dynamics on self-similar sets. Before that it is necessary to define some preliminary notions. We call a map $\varphi:\mathbb{R}^d\to\mathbb{R}^d$ a contracting similarity if there exists $r\in (0,1)$ such that 
\[\|\varphi(x)-\varphi(y)\|=r\|x-y\| \quad \text{for all } x,y\in\mathbb{R}^{d}.\] 
In this section, we define an {\it iterated function system, or IFS} for short, to be a finite set of contracting similarities $\Phi := \{\phi_i\}_{i=1}^{K}$. When each similarity in an IFS has the same contraction ratio the IFS is said to be \emph{homogeneous}. A well known result due to Hutchinson \cite{Hut} states that for any IFS there exists a unique, non-empty, compact set $X\subset\mathbb{R}^d$ satisfying $$\bigcup_{i=1}^{K}\varphi_{i}(X)=X.$$ The set $X$ is called the \emph{self-similar set} or \emph{attractor} of $\Phi$. Self-similar sets are important and well studied objects in the field of Fractal Geometry. For more on these sets we refer the reader to Falconer's book \cite{Fal}. Self-similar sets can be viewed as the image of $\Sigma=\{1,\ldots,K\}^{\N}$ under an appropriate map. Let $\pi:\Sigma\to X$ be given by 
\[\pi(\iii)=\lim_{n\to\infty}\left(\varphi_{i_1}\circ \cdots \circ \varphi_{i_n}\right)(0).\]
The map $\pi$ is continuous and surjective. Importantly $\pi$ allows us to take measures defined on $\Sigma$, most notably Gibbs measures, and to project them forward on to $X$. The measures defined on $X$ as pushforwards under $\pi$ are an important and well studied class. The most well studied measures amongst this class are the self-similar measures which are the pushforwards of Bernoulli measures.

If $\Phi$ satisfies the additional assumption that 
\[\varphi_i(X)\cap \varphi_{j}(X)=\emptyset \quad \text{for all } i\neq j\] 
then $\Phi$ is said to satisfy the \emph{strong separation condition}. This condition is equivalent to the map $\pi$ being a bijection. If an IFS satisfies the strong separation condition we can define a map $T:X\to X$ according to the rule 
\[T(x)=\varphi_{i}^{-1}(x) \quad \text{if} \quad x\in \varphi_i(X).\] 
Because $X=\bigcup_{i=1}^{K}\varphi_{i}(X)$ and this union is disjoint, the map $T$ is well defined.

Given a homogeneous IFS with common contraction ratio $r$ which satisfies the strong separation condition and a function $\psi:\N\to [0,\infty),$ we can define a recurrence set as follows:
$$\tilde{R}_{\psi}:=\left\{x\in X:\|T^{n}(x)-x\|\leq r^{\psi(n)}\textrm{ for infinitely many }n\in\mathbb{N}\right\}.$$ Here $\|\cdot\|$ denotes the Euclidean norm. This family of recurrence sets was studied previously in \cite{BakFar} and \cite{ChangWuWu}. Combining these papers with the mass transference principle of Beresnevich and Velani \cite{BerVel}, it is possible to obtain a detailed description of the metric properties of $\tilde{R}_{\psi}$ in terms of Hausdorff measure. As we will see, the results of the current paper allow us to prove new statements on the $\pi^* \mu$ measure of $\tilde{R}_{\psi}$. Here $\pi^*\mu$ denotes the pushforward of some Gibbs measure $\mu$ under the map $\pi$; that is, $\pi^*\mu=\mu \circ \pi^{-1}$. The key proposition that allows us to translate our previous results for $R_{\psi}$ (recall the definition in \eqref{eq:recset}) into statements for $\tilde{R}_{\psi}$ is the following:
\begin{proposition}
	\label{Prop:inclusions}
Let $\Phi$ be a homogeneous IFS satisfying the strong separation condition with associated self-similar set $X$ and let $\psi:\N\to [0,\infty)$. There exists $N\in\mathbb{N}$ depending only upon $\Phi$ such that we have the following inclusions:
$$\pi(R_{\lfloor \psi \rfloor +N})\subseteq \tilde{R}_{\psi}\subseteq \pi(R_{\lfloor \psi \rfloor -N}).$$
\end{proposition}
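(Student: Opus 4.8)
The plan is to transfer the whole problem through the coding map $\pi$, relying on two elementary observations: that $\pi$ conjugates $\sigma$ to $T$, and that for a homogeneous IFS satisfying the strong separation condition the Euclidean metric on $X$ is comparable, via $\pi$, to the symbolic metric on $\Sigma$.

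First I would record the conjugacy. Directly from the definition of $\pi$ one has $\pi(\iii)=\varphi_{i_1}(\pi(\sigma\iii))$, and since $\pi(\sigma\iii)\in X$ this shows $\pi(\iii)\in\varphi_{i_1}(X)$; hence $T(\pi(\iii))=\varphi_{i_1}^{-1}(\pi(\iii))=\pi(\sigma\iii)$, and iterating, $T^n\circ\pi=\pi\circ\sigma^n$ for every $n\in\N$. Writing $x=\pi(\iii)$ and $m_n:=|\sigma^n\iii\wedge\iii|$ we thus get $\|T^n(x)-x\|=\|\pi(\sigma^n\iii)-\pi(\iii)\|$, while $d(\sigma^n\iii,\iii)=K^{-m_n}$. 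Consequently $\iii\in R_\phi$ means precisely that $m_n\geq\phi(n)$ for infinitely many $n$, and $x\in\tilde R_\psi$ is a statement about the sizes $\|\pi(\sigma^n\iii)-\pi(\iii)\|$.

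Next I would produce constants $0<c_1\leq c_2$, depending only on $\Phi$, such that
$$c_1 r^{|\iii\wedge\jjj|}\leq\|\pi(\iii)-\pi(\jjj)\|\leq c_2 r^{|\iii\wedge\jjj|}\qquad\text{for all }\iii,\jjj\in\Sigma .$$
For the upper bound take $c_2=\diam(X)$: if $|\iii\wedge\jjj|=m$, then $\pi(\iii)$ and $\pi(\jjj)$ both lie in $(\varphi_{i_1}\circ\cdots\circ\varphi_{i_m})(X)$, a set of diameter $r^m\diam(X)$ by homogeneity. For the lower bound take $c_1=\delta_0:=\min_{a\neq b}\operatorname{dist}(\varphi_a(X),\varphi_b(X))$, which is strictly positive because under the strong separation condition the compact sets $\varphi_a(X)$ are pairwise disjoint; applying the inverse similarity $(\varphi_{i_1}\circ\cdots\circ\varphi_{i_m})^{-1}$, which scales distances by $r^{-m}$, sends $\pi(\iii)$ and $\pi(\jjj)$ into $\varphi_{i_{m+1}}(X)$ and $\varphi_{j_{m+1}}(X)$ with $i_{m+1}\neq j_{m+1}$, so that $\|\pi(\iii)-\pi(\jjj)\|\geq r^m\delta_0$. (If $\iii=\jjj$ both sides are zero.) I would also recall that the strong separation condition makes $\pi$ a bijection, so every $x\in X$ has a unique coding.

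Finally I would fix $N\in\N$ large enough that $c_2 r^{N-1}\leq 1$ and $N\geq|\log c_1/\log r|$; this $N$ depends only on $r$, $\delta_0$ and $\diam(X)$, hence only on $\Phi$. For the inclusion $\pi(R_{\lfloor\psi\rfloor+N})\subseteq\tilde R_\psi$: if $\iii\in R_{\lfloor\psi\rfloor+N}$ then, for infinitely many $n$, $m_n\geq\lfloor\psi(n)\rfloor+N\geq\psi(n)-1+N$, so $\|T^n(x)-x\|\leq c_2 r^{m_n}\leq c_2 r^{N-1}r^{\psi(n)}\leq r^{\psi(n)}$ and $x=\pi(\iii)\in\tilde R_\psi$. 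For the inclusion $\tilde R_\psi\subseteq\pi(R_{\lfloor\psi\rfloor-N})$: given $x\in\tilde R_\psi$, write $x=\pi(\iii)$; for infinitely many $n$, $c_1 r^{m_n}\leq\|T^n(x)-x\|\leq r^{\psi(n)}$, hence $r^{m_n}\leq c_1^{-1}r^{\psi(n)}$, and taking logarithms (note $\log r<0$) gives $m_n\geq\psi(n)-\log c_1/\log r\geq\psi(n)-N\geq\lfloor\psi(n)\rfloor-N$, so $d(\sigma^n\iii,\iii)=K^{-m_n}\leq K^{-(\lfloor\psi(n)\rfloor-N)}$ and $\iii\in R_{\lfloor\psi\rfloor-N}$. (Negative values of $\lfloor\psi(n)\rfloor-N$ cause no trouble, since then $K^{-(\lfloor\psi(n)\rfloor-N)}\geq 1\geq d(\sigma^n\iii,\iii)$ automatically.) I do not expect a genuine obstacle here: the only delicate points are the bookkeeping with the floor functions and the absolute constants, and getting the $r^{-m}$ rescaling in the lower Euclidean estimate right — that estimate being the sole place where the strong separation condition is genuinely used.
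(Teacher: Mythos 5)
Your proof is correct and follows essentially the same approach as the paper: you establish the conjugacy $\pi\circ\sigma = T\circ\pi$ and the two-sided comparison between the symbolic and Euclidean metrics (the paper phrases this as a "Fact" giving exactly the two implications with additive error $N$, you phrase it as a bi-Lipschitz estimate $c_1 r^{|\iii\wedge\jjj|}\le\|\pi(\iii)-\pi(\jjj)\|\le c_2 r^{|\iii\wedge\jjj|}$, but the rescaling argument using the separation constant $\delta_0$ and $\diam(X)$ is identical), then transfer the recurrence condition through $\pi$ with the floor-function bookkeeping. No gaps.
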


\begin{proof}
This follows from the observation that $\pi(\sigma \iii)=T(\pi \iii)$ together with the following fact from Fractal Geometry.\\

\noindent \textbf{Fact:} Let $x,y\in X$ and $\iii,\jjj\in \Sigma$ be such that $\pi(\iii)=x$ and $\pi(\jjj)=y$. Then there exists $N\in\mathbb{N}$ such that:
\begin{equation}
\label{Fact1}
\text{if }\|x-y\|\leq r^{\psi(n)}\text{ then }|\iii\wedge \jjj|\geq \lfloor \psi(n)\rfloor-N
\end{equation} and
\begin{equation}
\label{Fact2}
\text{if }|\iii\wedge \jjj|\geq \lfloor \psi(n)\rfloor+N \text{ then }\|x-y\|\leq r^{\psi(n)}.
\end{equation}\\
We include a proof of this fact for completion.\\

Let $N\in\mathbb{N}$ be sufficiently large that $$\inf_{i,j: i\neq j}d(\varphi_{i}(X),\varphi_{j}(X))\geq r^{N}\text{ and }r^{N-1}\cdot Diam(X)<1.$$ Here $d$ is the Euclidean metric. Note that $\inf_{i,j: i\neq j}d(\varphi_{i}(X),\varphi_{j}(X))>0$ because of the strong separation condition. Therefore $N$ is well defined. We now prove that this $N$ satisfies the desired properties.\\

Let $x,y\in X$ be such that $\|x-y\|\leq r^{\psi(n)}$. By considering inverses and the maximal common prefix $\iii\wedge \jjj,$ we see that 
$$r^{-|\iii\wedge \jjj|}\|x-y\|=\|(\varphi_{i_1}\circ \cdots \circ \varphi_{i_{|\iii\wedge \jjj|}})^{-1}(x)-(\varphi_{j_1}\circ \cdots \circ \varphi_{j_{|\iii\wedge \jjj|}})^{-1}(y)\|\geq r^{N}.$$ 
Therefore $\|x-y\|\geq r^{N+|\iii\wedge \jjj|}$ and we have $\psi(n)\leq N+|\iii\wedge \jjj|.$ Taking integer parts we see that $|\iii\wedge \jjj|\geq \lfloor\psi(n)\rfloor-N$ and \eqref{Fact1} holds.

Now suppose $|\iii\wedge \jjj|\geq \lfloor \psi(n)\rfloor+N.$ Then \begin{align*}
\|x-y\|&\leq Diam((\varphi_{i_1}\circ \cdots \circ \varphi_{i_{|\iii\wedge \jjj|}})(X)) \\
       &\leq r^{|\iii\wedge \jjj|}\cdot Diam(X) \\
       &\leq r^{\lfloor \psi(n)\rfloor+N}\cdot Diam(X) \\
       &\leq r^{\psi(n)-1+N}\cdot Diam(X) \\
       &\leq r^{\psi(n)}.
\end{align*} Therefore \eqref{Fact2} holds and our proof is complete.
\end{proof}
Equipped with Proposition \ref{Prop:inclusions}, it is possible to translate Theorems \ref{thm:convergence}, \ref{thm:meas1}, \ref{loglogtheorem}, and \ref{thm:nocharac} into the setting of dynamics on self-similar sets and pushforwards of Gibbs measures. The key point that allows us to establish these analogues is that the parameter $N,$ whose existence is asserted by Proposition \ref{Prop:inclusions}, only depends upon the underlying IFS. As such, if $\psi:\N\to [0,\infty)$ satisfies some appropriate hypothesis analogous to that given in one of the theorems listed above, then the functions $\lfloor \psi(n)\rfloor-N$ and $\lfloor \psi(n)\rfloor+N$ will also satisfy the hypothesis formulated in the original theorem. Using this observation together with the inclusions given in Proposition \ref{Prop:inclusions}, we may prove appropriate analogues of Theorems \ref{thm:convergence}, \ref{thm:meas1}, \ref{loglogtheorem}, and \ref{thm:nocharac} in this setting. For the sake of brevity we do not give the statement of each of these analogues here. We instead content ourselves with the following analogue of Theorem \ref{loglogtheorem}.

\begin{theorem}
\label{thm:ssloglogtheorem}
Let $\Phi$ be a homogeneous IFS satisfying the strong separation condition and let $\mu$ be the Gibbs measure of a H\"{o}lder continuous potential that is not cohomologous to a constant. For $\varepsilon>0$ let $\psi_{\varepsilon}^{+}:\mathbb{N}\to [0,\infty)$ and $\psi_{\varepsilon}^{-}:\mathbb{N}\to [0,\infty)$ be given by $$\psi_{\varepsilon}^{+}(n)= \frac{\log n}{h_{\mu}}+\frac{(1+\varepsilon)}{h_{\mu}^{3/2}}\sqrt{2\rho_{\mu}\log n\log \log \log n}$$ and $$\psi_{\varepsilon}^{-}(n)= \frac{\log n}{h_{\mu}}+\frac{(1-\varepsilon)}{h_{\mu}^{3/2}}\sqrt{2\rho_{\mu}\log n\log \log \log n}.$$ Then for any $\varepsilon>0$ we have $\pi^*\mu(\tilde{R}_{\psi_{\varepsilon}^{+}})=0$ and $\pi^*\mu(\tilde{R}_{\psi_{\varepsilon}^{-}})=1.$
\end{theorem}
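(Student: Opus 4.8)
The plan is to deduce Theorem \ref{thm:ssloglogtheorem} from its symbolic counterpart, Theorem \ref{loglogtheorem}, by transporting everything through the coding map $\pi$ via Proposition \ref{Prop:inclusions}. First I would sandwich the metric recurrence sets between projections of symbolic ones: Proposition \ref{Prop:inclusions} gives $\pi(R_{\lfloor\psi\rfloor+N})\subseteq\tilde R_\psi\subseteq\pi(R_{\lfloor\psi\rfloor-N})$ for a constant $N=N(\Phi)$. Since $\Phi$ satisfies the strong separation condition, $\pi$ is a bijection, indeed a homeomorphism of compact metric spaces, so for every Borel $A\subseteq\Sigma$ we have $\pi^{-1}(\pi(A))=A$; hence $\pi(A)$ is Borel and $\pi^*\mu(\pi(A))=\mu(\pi^{-1}(\pi(A)))=\mu(A)$. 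Applying this with $A=R_{\lfloor\psi\rfloor\pm N}$ turns the sandwich into
\[
\mu(R_{\lfloor\psi\rfloor+N})\;\le\;\pi^*\mu(\tilde R_\psi)\;\le\;\mu(R_{\lfloor\psi\rfloor-N}).
\]
I would also record the trivial monotonicity that if $\psi_1(n)\le\psi_2(n)$ for all large $n$ then $R_{\psi_2}\subseteq R_{\psi_1}$, since a sequence that returns within $K^{-\psi_2(n)}$ for infinitely many $n$ does so within $K^{-\psi_1(n)}$ for all the large indices among those $n$.

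With these in hand, the upper bound handles the convergence case: $\pi^*\mu(\tilde R_{\psi_\varepsilon^+})\le\mu(R_{\lfloor\psi_\varepsilon^+\rfloor-N})$, where $\lfloor\psi_\varepsilon^+\rfloor$ is exactly the function called $\psi_\varepsilon^+$ in Theorem \ref{loglogtheorem}, so $\lfloor\psi_\varepsilon^+\rfloor-N$ is merely its shift by the fixed additive constant $N$. Since the difference $\psi_\varepsilon^+(n)-\psi_{\varepsilon/2}^+(n)$ grows like $\frac{\varepsilon/2}{h_\mu^{3/2}}\sqrt{2\rho_\mu\log n\log\log\log n}$, and in particular tends to infinity, we get $\lfloor\psi_\varepsilon^+\rfloor-N\ge\psi_{\varepsilon/2}^+$ for all large $n$, hence $R_{\lfloor\psi_\varepsilon^+\rfloor-N}\subseteq R_{\psi_{\varepsilon/2}^+}$, which has $\mu$-measure $0$ by Theorem \ref{loglogtheorem}; therefore $\pi^*\mu(\tilde R_{\psi_\varepsilon^+})=0$. (Equivalently one may feed $\lfloor\psi_\varepsilon^+\rfloor-N$ directly into Theorem \ref{thm:convergence}, the factor $e^{h_\mu N}$ being an irrelevant constant and the remaining computation being that of the proof of Theorem \ref{loglogtheorem} with $\varepsilon$ replaced by $\varepsilon/2$.) The divergence case is symmetric via the lower bound: $\pi^*\mu(\tilde R_{\psi_\varepsilon^-})\ge\mu(R_{\lfloor\psi_\varepsilon^-\rfloor+N})$, and for $0<\varepsilon<1$ I would fix $\varepsilon<\varepsilon'<1$; then $\lfloor\psi_\varepsilon^-\rfloor+N\le\psi_{\varepsilon'}^-$ for all large $n$ (again because the gap diverges), so $R_{\psi_{\varepsilon'}^-}\subseteq R_{\lfloor\psi_\varepsilon^-\rfloor+N}$, the latter having $\mu$-measure $1$ by Theorem \ref{loglogtheorem}; for $\varepsilon\ge1$ one notes $\psi_\varepsilon^-\le\psi_{1/2}^-$, whence $\tilde R_{\psi_{1/2}^-}\subseteq\tilde R_{\psi_\varepsilon^-}$ and the case just settled applies.

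I do not expect a serious obstacle: the substance of the result already lies in Proposition \ref{Prop:inclusions} and Theorem \ref{loglogtheorem}, and what remains is bookkeeping. The one point that genuinely has to be checked — and the reason the argument works — is that the constant $N$ in Proposition \ref{Prop:inclusions} depends only on the IFS $\Phi$, not on $n$ or on $\psi$, so that perturbing the threshold $\psi_\varepsilon^\pm$ by $N$ is absorbed by the divergent $\sqrt{\log n\log\log\log n}$-sized separation between the parameters $\varepsilon$ and $\varepsilon'$. The same template (sandwich via Proposition \ref{Prop:inclusions}, transfer across the bijection $\pi$, absorb the additive constant $N$) produces the self-similar analogues of Theorems \ref{thm:convergence}, \ref{thm:meas1}, and \ref{thm:nocharac}, which is why only this representative case is recorded.
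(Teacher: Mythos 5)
Your approach is exactly the paper's: the paper proves Proposition~\ref{Prop:inclusions} and then states, without a written-out proof, that the self-similar analogues follow by feeding $\lfloor\psi\rfloor\pm N$ back into the symbolic theorems, the key point being that $N$ depends only on $\Phi$. Your write-up fills in precisely that bookkeeping, including the transfer identity $\pi^*\mu(\pi(A))=\mu(A)$ for Borel $A\subseteq\Sigma$ (valid because the strong separation condition makes $\pi$ a homeomorphism).

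There is, however, a sign slip in the divergence case. You fix $\varepsilon<\varepsilon'<1$ and claim $\lfloor\psi_\varepsilon^-\rfloor+N\le\psi_{\varepsilon'}^-$ eventually, but the coefficient of the $\sqrt{2\rho_\mu\log n\log\log\log n}$ term in $\psi_{\varepsilon'}^-$ is $1-\varepsilon'$, which is \emph{smaller} when $\varepsilon'$ is larger; so $\psi_{\varepsilon'}^-<\psi_\varepsilon^-$ with a gap $(\varepsilon'-\varepsilon)\cdot h_\mu^{-3/2}\sqrt{2\rho_\mu\log n\log\log\log n}\to\infty$, and the desired inequality fails for all large $n$. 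You should instead choose $0<\varepsilon'<\varepsilon$: then $\psi_{\varepsilon'}^--\psi_\varepsilon^-=(\varepsilon-\varepsilon')h_\mu^{-3/2}\sqrt{2\rho_\mu\log n\log\log\log n}\to\infty$, which absorbs $N+1$, giving $\lfloor\psi_\varepsilon^-\rfloor+N\le\lfloor\psi_{\varepsilon'}^-\rfloor$ eventually, hence $R_{\lfloor\psi_{\varepsilon'}^-\rfloor}\subseteq R_{\lfloor\psi_\varepsilon^-\rfloor+N}$ up to finitely many indices (which does not change a $\limsup$ set), and Theorem~\ref{loglogtheorem} applied with $\varepsilon'$ gives full measure. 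The rest of your argument, including the convergence case and the reduction of $\varepsilon\geq1$ to $\varepsilon=1/2$ by monotonicity, is correct.
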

Theorem \ref{thm:ssloglogtheorem} demonstrates that the critical threshold observed for Gibbs measures and shifts of finite type persists in the setting of dynamics on self-similar sets. Theorem \ref{thm:ssloglogtheorem} also yields some interesting metric properties for the sets $\tilde{R}_{\psi_{\varepsilon}^{-}}$ that do not follow from \cite{BakFar} or \cite{ChangWuWu}.It is well known that for any Gibbs measure $\mu$ for which the defining potential is not cohomologous to a constant we have $h_\mu<\log K$, and so by a covering argument, $\mathcal{H}^{\dimh{X}}(\tilde{R}_{\psi_{\varepsilon}^{-}})=0$ for any $\varepsilon\in(0,1)$. Here $\dimh{X}$ is the Hausdorff dimension of the self-similar set $X$. Moreover, using the mass transference principle of Beresnevich and Velani \cite{BerVel}, it is possible to show that $\dimh{(\tilde{R}_{\psi_{\varepsilon}^{-}})}=\dimh{X}$ for any $\varepsilon>0$. Therefore, despite being null in terms of the $\dimh{(\tilde{R}_{\psi_{\varepsilon}^{-}})}$-dimensional Hausdorff measure, Theorem \ref{thm:ssloglogtheorem} tells us that $\tilde{R}_{\psi_{\varepsilon}^{-}}$ is large in terms of $\mu$ for any $\varepsilon>0$.

\end{document}